\newcommand{\R}{\mathbb{R}}
\newcommand{\lspan}{\operatorname{span}}
\newcommand{\clspan}{\overline{\operatorname{span}}}
\renewcommand{\geq}{\geqslant}
\renewcommand{\leq}{\leqslant}
\newcommand{\norm}[1]{\left\Vert#1\right\Vert}
\newcommand{\Lip}{{\mathrm{Lip}}_0}
\newcommand{\dens}{\operatorname{dens}}
\newcommand{\supp}{\operatorname{supp}}
\newcommand{\linf}[1]{\ell_\infty \left( #1  \right)}
\newcommand{\projtensor}{\,\widehat{\bigotimes}_\pi}
\newcommand{\injtensor}{\,\widehat{\bigotimes}_\eps}
\newcommand{\tensor}{\,\widehat{\bigotimes}_\gamma}
\newcommand{\sop}{\operatorname{supp}}
\newcommand{\ASQ}[1]{\text{ASQ}_{<#1}} 
\newcommand{\SQ}[1]{\text{SQ}_{<#1}}
\newtheorem{theorem}{Theorem}[section]
\newtheorem{lemma}[theorem]{Lemma}
\newtheorem{proposition}[theorem]{Proposition}
\newtheorem{corollary}[theorem]{Corollary}
\theoremstyle{definition}
\newtheorem{definition}[theorem]{Definition}
\newtheorem{example}[theorem]{Example}
\theoremstyle{remark}
\newtheorem{remark}[theorem]{Remark}
\newtheorem{question}{Question}
\numberwithin{equation}{section}
\newcommand{\abs}[1]{\left\lvert#1\right\rvert}
\def\fnote#1{\footnote}
\def\ignora#1{}
\def\n3#1{\left\vert  \! \left\vert \! \left\vert \, #1 \, \right\vert \!
  \right\vert \! \right\vert }
\newcommand{\pten}{\ensuremath{\widehat{\otimes}_\pi}}
\renewcommand{\leq}{\le}
\let\emptyset\varnothing
\newcommand{\N}{\mathbb{N}}
\newcommand{\ra}{\longrightarrow}
\newcommand{\eps}{\varepsilon}
\begin{document}


\author{ Esteban Martínez Vañó }\address{Universidad de Granada, Facultad de Ciencias. Departamento de An\'{a}lisis Matem\'{a}tico, 18071-Granada
(Spain)} \email{ emv@ugr.es}

\author{ Abraham Rueda Zoca }\address{Universidad de Granada, Facultad de Ciencias. Departamento de An\'{a}lisis Matem\'{a}tico, 18071-Granada
(Spain)} \email{ abrahamrueda@ugr.es}
\urladdr{\url{https://arzenglish.wordpress.com}}

\subjclass[2020]{46B26; 54C15; 46B04}

\keywords{transfinite ideals; transfinite almost isometric ideals; spaces of universal disposition; transfinite injective spaces}

\title{Transfinite (almost isometric) ideals in Banach spaces}

\begin{abstract}
We present and study some transfinite versions of (almost isometric) ideals in Banach spaces. As these notions are closely related with Lindenstrauss and Gurari\u{\i} spaces respectively, we will present a similar characterization for transfinite injective spaces and spaces of (almost) universal disposition in terms of these transfinite ideals. Furthermore, we construct several examples outside these type of Banach spaces and make a revision of some classical results for transfinite ideals.
\end{abstract}

\maketitle

\markboth{MART\'INEZ VAÑ\'O,  RUEDA ZOCA}{TRANSFINITE (ALMOST ISOMETRIC) IDEALS IN BANACH SPACES}

\tableofcontents

\section{Introduction}

One of the most longstanding research line in Functional Analysis is related to the extension of certain classes of continuous functions. To mention a couple of celebrated results we may mention, on the one hand, the Hahn-Banach theorem, which has to do with extensions of linear and continuous functionals; on the other hand, McShane extension theorem \cite[Theorem 1.33]{weaver18}, which deals with the problem of extending Lipschitz mappings taking values on $\mathbb R$. The search of a vector-valued version of these results leads to the notion of \textit{injective Banach spaces}. A Banach space $X$ is \textit{injective} if every linear bounded operator $t:Y\longrightarrow X$ admits a norm-preserving extension $T:Z\longrightarrow X$ (where $Y\subseteq Z$). This is equivalent to the fact that $X$ is $1$-complemented in any Banach space containing it (see. e.g. \cite[Section 2.5]{alka2006}). Even though injective Banach spaces enjoy a pleasant property, this class of spaces is quite restrictive (as a sample of this, observe that separable injective Banach spaces are finite dimensional \cite[Corollary 4.3.13]{alka2006}). Because of this, in order to deal with a larger class of Banach spaces, we can consider two different classes of Banach spaces whose extension property is related with finite rank operators. On the one hand, we can consider the Banach spaces $X$ satisfying that given finite dimensional spaces $Y\subseteq Z$ and $t:Y\longrightarrow X$ there exists for every $\varepsilon>0$ an extension $T:Z\longrightarrow X$ with $\Vert T\Vert\leq (1+\varepsilon)\Vert t\Vert$, which give raise to the class of \textit{$L_1$ preduals} (see \cite[Theorem 6.1]{linds64} for explanation). On the other hand, if we consider the Banach spaces $X$ satisfying that given finite dimensional spaces $Y\subseteq Z$ and any isometry $t:Y\longrightarrow X$ there exists for every $\varepsilon>0$ an extension $T:Z\longrightarrow X$ such that $(1-\varepsilon)\Vert z\Vert\leq \Vert T(z)\Vert\leq (1+\varepsilon)\Vert z\Vert$, we get the class of \textit{Gurari\u{\i} spaces} (see the survey \cite{gk11} for background and more references). 

As well as happen with injective Banach spaces, the property of being an $L_1$ predual or a Gurari\u{\i} space can be characterised in terms of a certain projective property which depends on finite dimensional subspaces. In order to describe them, we need the following definition.

\begin{definition}\label{defideal}
Let $X$ be a Banach space and $Y\subseteq X$ a subspace. We say that $Y$ is:
\begin{enumerate}
    \item an \textit{ideal} in $X$ if, for any $E\subseteq X$ of finite dimension and any $\varepsilon>0$, there exists $T:E\longrightarrow Y$ such that $\Vert T\Vert \le 1+\varepsilon$ and $T(e)=e$ holds for every $e\in E\cap Y$.
    \item an \textit{almost isometric ideal} in $X$ if, for any $E\subseteq X$ of finite dimension and any $\varepsilon>0$, there exists $T:E\longrightarrow Y$ such that $(1-\varepsilon)\Vert e\Vert\le \Vert T(e)\Vert\le (1+\varepsilon)\Vert e\Vert$ holds for every $e\in E$ and $T(e)=e$ holds for every $e\in E\cap Y$.
    \end{enumerate}
\end{definition}
We refer to \cite{gks93} and references therein for background on ideals, whereas we send the interested reader on almost isometric ideals to \cite{aln2}. 

Going back to the search of a projective characterisation of $L_1$ preduals and of Gurari\u{\i} spaces, we get the following two results:
\begin{enumerate}
    \item A Banach space $X$ is an $L_1$ predual if, and only if, $X$ is an ideal in any Banach space containing it \cite[Proposition 3.4]{fakh72}.
    \item A Banach space $X$ is a Gurari\u{\i} space if, and only if, $X$ is an almost isometric ideal in any Banach space containing it \cite[Theorem 4.3]{aln2}.
\end{enumerate}

Generalisations of $L_1$ preduals and Gurari\u{\i} spaces can be considered if we strengthen the extension property by considering domains of higher density character: 

Given a Banach space $X$ and an infinite cardinal $\kappa$, we say that $X$ is
\begin{enumerate}
    \item \textit{$\kappa$ injective} if for any two normed spaces $Y\subseteq Z$ with $\dens(Z)< \kappa$ and any linear bounded operator $t:Y\longrightarrow X$ there exists a norm-preserving extension $T:Z\longrightarrow X$.
    \item \textit{of universal disposition (resp. almost universal disposition) for density $\kappa$} ($\text{(A)UD}_{<\kappa}$ in short) if for any two normed spaces $Y\subseteq Z$ with $\dens(Z)< \kappa$ and any isometry $t:Y\longrightarrow X$ there exists (for every $\varepsilon>0$) an extension $T:Z\longrightarrow X$ which is an isometry (respectively such that $(1-\varepsilon)\Vert z\Vert\le \Vert T(z)\Vert\le (1+\varepsilon)\Vert z\Vert$ holds for every $z\in Z$).
    \end{enumerate}
We refer the reader to \cite{accgm15} and references therein for background on $\kappa$ injective Banach spaces and to \cite{gk11} and references therein for background about spaces of (almost) universal disposition.

At this point we consider the following questions: are there transfinite notions of ideals and almost isometric ideals which produce a projective characterisation of $\kappa$ injective spaces and of spaces of (almost) universal disposition for density $\kappa$? The aim of this paper is to introduce such transfinite notions.

The content is organised as follows. In Section~\ref{section:preliminaries} we introduce all the notation that we will need throughout the text. In Section~\ref{sect:transideal} we develop our research about $\kappa$ ideals in Banach spaces, which are formally defined in Definition~\ref{defi:transideal}. We prove that the notion of $\kappa$ ideal is the transfinite notion of ideals we were looking for characterising $\kappa$ injective Banach spaces in the following sense: a Banach space $X$ is $\kappa$ injective if, and only if, $X$ is a $\kappa$ ideal in every Banach space containing $X$ isometrically (Theorem~\ref{charinj1}). In the search of new examples we prove in Theorem~\ref{tensor} that if $E$ is a $\kappa$ ideal in $X$ and $F$ is a $\kappa$ ideal in $Y$ then $E\projtensor F$ (resp. $E\injtensor F$) is a $\kappa$ ideal in $X\projtensor Y$ (resp. $X\injtensor Y$). We also prove that if $Y$ is a $\kappa$ ideal in $X$ then $\mathcal F(Y)$ is a $\kappa$ ideal in $\mathcal F(X)$. These two results show examples of $\kappa$ ideals which do not come from the class of $\kappa$ injective Banach spaces (indeed they are not even $L_1$ preduals, see Example~\ref{exam:contratensor} and the paragraph after Proposition~\ref{theo:idealfree}).

In Section~\ref{sect:transaiideal} we move to the introduction of $\kappa$ (almost) isometric ideals in Definition~\ref{defi:transaiideal}. We prove that this notion characterises the spaces of (almost) universal disposition in the following sense: a Banach space $X$ is a $\text{(A)UD}_{<\kappa}$ space if, and only if, $X$ is a $\kappa$ (almost) isometric ideal in every Banach space that contains $X$ isometrically (Theorem~\ref{AUDchar}). In order to provide more examples, we prove in Theorem~\ref{theo:injetensortransai} that $E$ is a $\kappa$ (almost) isometric ideal in $X$ and $F$ is a $\kappa$ almost isometric ideal in $Y$ then $E\injtensor F$ is a $\kappa$ almost isometric ideal in $X\injtensor Y$. We devote the rest of the section to establish deep connection between the notion of $\kappa$ (almost) isometric ideals and transfinite version of almost squareness and octahedral norms. For instance, we prove that (A)SQ$_{<\kappa}$ is a property inherited by $\kappa$ (almost) isometric ideals (see Propositions~\ref{teo:hereaiidealesasqlargos} and \ref{prop:heredaiisosq}), obtaining a similar version for the property of $<\kappa$ octahedrality and the failure of $(-1)$-BCP$_{<\kappa}$ in Proposition~\ref{bajaocta}. We also prove in Theorems~\ref{theo:caraasqlargosaiideales} and \ref{theo:caraasqlargosaiidealesbis} that, for a Banach space $X$, $X$ is (A)SQ$_{<\kappa}$ (resp. it fails $(-1)$-BCP$_{<\kappa}$ or is $<\kappa$ octahedral) if, and only if, $X\times\{0\}$ is a $\kappa$ (almost) isometric ideal in $X\oplus_\infty\mathbb R$ (resp. in $X\oplus_1 \mathbb R$). As a consequence, we prove in Corollary~\ref{cor:propUDoctaasq} that if a Banach space $X$ is an $\text{(A)UD}_{<\kappa}$ space then $X$ is simultaneously SQ$_{<\kappa}$ and fails the $(-1)$-BCP$_{<\kappa}$ (resp. $X$ is simultaneously ASQ$_{<\kappa}$ and $<\kappa$ octahedral).

In Section~\ref{sect:classicalresults} we analyse the question whether classical results concerning almost isometric ideals still holds true for the transfinite versions we include in the paper, namely, the principle of local reflexivity, Sims-Yost theorem and Abrahamsen theorem (see Theorems~\ref{theo:plr}, \ref{theo:simyost} and \ref{theo:abrahamsenai}). In Subsection~\ref{subsect:principlelocal} we provide two examples which show that no transfinite version of the principle of local reflexivity is true. Concerning Sims-Yost and Abrahamsen theorems, we prove in Subsection~\ref{subsect:simyostfalsos} that the existence of counterexamples for both results is consistent with ZFC.

Section~\ref{section:aiidealestensorultrapower} is devoted with the study of almost isometric ideals in tensor product spaces and in ultrapower spaces. We have separated these results from the main section of $\kappa$ (almost) isometric ideal since the results of Section~\ref{section:aiidealestensorultrapower} make use of perturbation arguments which are exclusive of the notion of almost isometric ideal and whose inclusion in Section~\ref{sect:transaiideal} would break the transfinite spirit that this section has. The main result in this section is Theorem~\ref{theorem:aiidealinyect}, where it is proved that if  $Z\subseteq X$ and $W\subseteq Y$ are almost isometric ideals, then $Z\injtensor W$ is an almost isometric ideal in $X\injtensor Y$. Concerning the projective tensor product, we show in Example~\ref{exam:contraproyect} that if $Y$ is an almost isometric ideal in $X$ and $Z$ is any Banach space then $Y\projtensor Z$ is not necessarily an almost isometric ideal in $X\projtensor Y$. Up to the authors knowledge, both results were not previously known. We conclude Section~\ref{section:aiidealestensorultrapower} with a result concerning ultrapowers of Banach spaces. More precisely, we prove in Proposition~\ref{prop:ultrapoweraiideals} that if $Y$ is a subspace of $X$ then $Y$ is an almost isometric ideal in $X$ if, and only if, $Y_\mathcal U$ is an isometric ideal in $X_\mathcal U$ for any countably incomplete ultrafilter $\mathcal U$ over any infinite set $I$.

We conclude the paper with Section~\ref{sect:openquestion}, which is devoted to collect final remarks, open questions derived from our research and possible future directions from our work.

\section{Notation and preliminary results}\label{section:preliminaries}

Unless we state the contrary we will consider real Banach spaces. Given a Banach space $X$, we denote by $B_X$ and $S_X$ the closed unit ball and the unit sphere respectively. We also denote by $X^*$ the topological dual of $X$. For a given subset $E$ of $X$ we write $\lspan(E)$ (resp. $\clspan(E)$) for the (closed) linear span of $E$ and $\vert E\vert$ to denote the cardinal of the set $E$. Given two Banach spaces $X$ and $Y$, we denote by $L(X,Y)$ the space of all bounded operators from $X$ to $Y$. Given an $\eps > 0$ we will say that $T \in L(X,Y)$ is an $\eps$-isometry if for any $x \in X$ we have the following inequalities
$$(1-\eps) \norm{x} \le \norm{T(x)} \le (1+\eps) \norm{x}.$$
Finally, we will say that a Banach space $X$ has density $\kappa$, denoted by $\dens(X)$, if it is the least cardinal that a subset spanning a dense subspace of $X$ can have. With this definition we recover the classical one for infinite dimensional Banach spaces, that is, the least cardinal of a dense subset of $X$, but have the advantage that for a finite dimensional Banach space $X$ we obtain that $\dens(X) = \dim(X)$.

As we use some set theoretic notions about cardinals along the text, we refer the reader to \cite[Chapters 3 and 5]{jech03} where one can find more than enough information.

We also refer the interested reader to \cite{abrahamsen15,aln2,rao16,rao13,rao01} and references therein for background on (almost isometric) ideals in Banach spaces.  Let us introduce here, for easy reference, the establishment of the two main results concerning (almost) isometric ideals. To begin with, we present the principle of local reflexivity, whose proof can be found for instance in \cite[Theorem 9.15]{fab}.

\begin{theorem}[Principle of local reflexivity]\label{theo:plr}
Let $X$ be a Banach space. Then, for every finite dimensional subspace $E$ of $X^{**}$, every finite dimensional $F\subseteq X^*$ and every $\varepsilon>0$ there exists a linear operator $T:E\longrightarrow X$ such that
\begin{enumerate}
    \item $T(e)=e$ holds for every $e\in E\cap X$, 
    \item $(1-\varepsilon)\Vert e^{**}\Vert\le \Vert T(e^{**})\Vert\le (1+\varepsilon)\Vert e^{**}\Vert$ holds for every $e^{**}\in E$ and,
    \item $x^*(T(e^{**}))=e^{**}(x^*)$ holds for every $e^{**}\in E$ and every $x^*\in F$. 
\end{enumerate}
\end{theorem}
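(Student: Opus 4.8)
The plan is to recast the conclusion as a finite interpolation (moment) problem for the images of a basis and to solve it by a quantitative, operator-valued Hahn--Banach argument of Helly type, with Goldstine's theorem supplying the optimal constant. First I would fix a basis $e_1^{**},\dots,e_n^{**}$ of $E$ adapted to the splitting, so that $e_1^{**},\dots,e_p^{**}$ is a basis of $E\cap X$ (and hence lies in $X$). Prescribing $T$ is the same as choosing the images $x_i:=T(e_i^{**})\in X$; condition (1) forces $x_i=e_i^{**}$ for $i\le p$, while condition (3), evaluated on a basis $x_1^*,\dots,x_m^*$ of $F$, becomes the \emph{finite} family of linear equations $\langle x_i,x_j^*\rangle=\langle e_i^{**},x_j^*\rangle$. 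Thus the whole statement reduces to finding $x_{p+1},\dots,x_n\in X$ solving finitely many prescribed linear equations while the norm of every combination $\sum_i a_i x_i$ stays within a factor $(1\pm\varepsilon)$ of $\norm{\sum_i a_i e_i^{**}}$.

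The upper estimate in (2), together with the exact solvability of (3), is the core, and I would obtain both at once from the operator form of Helly's theorem. The point is that the inclusion $E\hookrightarrow X^{**}$ is itself a ``solution'' of the interpolation constraints valued in the bidual, and it is an isometry on $E$; the Helly/duality argument then produces a genuine solution $T:E\to X$, satisfying (3) exactly, whose norm on $E$ exceeds $1$ by at most $\varepsilon$, i.e.\ $\norm{T(e^{**})}\le(1+\varepsilon)\norm{e^{**}}$ for \emph{every} $e^{**}\in E$. Here Goldstine's theorem (the weak$^*$-density of $B_X$ in $B_{X^{**}}$) is exactly what guarantees that passing from $X^{**}$-solutions to $X$-solutions costs nothing beyond the prescribed $\varepsilon$.

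For the lower estimate I would enlarge the constraint family before applying Helly. Fix a $\delta$-net $y_1,\dots,y_N$ of the (compact) sphere $S_E$ and, for each $l$, a norming functional $g_l\in S_{X^*}$ with $g_l(y_l)=\norm{y_l}=1$; add the $g_l$ to the finite set of functionals against which (3) is imposed. Exact interpolation then forces $g_l(T(y_l))=y_l(g_l)=1$, whence $\norm{T(y_l)}\ge 1$ on the net, and, the norm of $T$ on $E$ being already bounded by $1+\varepsilon$, this propagates to $\norm{T(e^{**})}\ge(1-\varepsilon)\norm{e^{**}}$ on all of $S_E$ by taking $\delta$ small. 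Condition (1) is preserved throughout because the interpolation constraints are consistent with the choice $x_i=e_i^{**}$ for $i\le p$, so the solution can be taken to fix $E\cap X$ pointwise.

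The main obstacle is precisely the simultaneous achievement of the \emph{exact} interpolation (3) and the two-sided \emph{near-isometry} (2): this is what makes the statement a genuine theorem rather than a direct consequence of Goldstine. Weak$^*$-approximation alone yields (3) only approximately and gives no upper norm control, and turning approximate interpolation into exact interpolation --- say by subtracting from the approximate images a correction $\sum_k\eta_{ik}u_k$, where the $u_k\in X$ are biorthogonal to $x_1^*,\dots,x_m^*$ (available since the restriction map $X\to F^*$ is onto) --- must be done with errors $\eta_{ik}$ small enough not to spoil either side of (2). Balancing the exact linear constraints against the quantitative norm bounds is the delicate point, and it is exactly the content encapsulated by the Helly-type duality argument.
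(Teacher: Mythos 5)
You should first be aware that the paper contains no proof of Theorem~\ref{theo:plr}: it is stated in the preliminaries purely as classical background, with the proof delegated to \cite[Theorem 9.15]{fab}. So the only meaningful benchmark is the classical argument, and your outline does follow its standard architecture (reduce to a finite interpolation problem for basis images, use a Helly/Goldstine duality for the upper bound and exact validity of (3), and force the lower bound in (2) by adjoining almost-norming functionals for a net of $S_E$). The architecture is right, but two steps are genuinely missing rather than merely compressed.

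First, the ``operator form of Helly's theorem'' that you invoke is not a lemma one can black-box here: it is essentially the theorem being proved. Scalar Helly (finitely many scalar equations, a solution in $X^{**}$ of norm $\le 1$ yields solutions in $X$ of norm $\le 1+\varepsilon$) is elementary, but to control $\norm{T}$ on \emph{all} of $E$ you must run the Goldstine/distance-duality argument in the space $Z=L(E,X)$, and this requires the isometric identification $L(E,X)^{**}\cong L(E,X^{**})$ (Dean's theorem; equivalently, the dualities $(E^*\otimes_\varepsilon X)^*=E\otimes_\pi X^*$ and $(E\otimes_\pi X^*)^*=L(E,X^{**})$, valid because $\dim E<\infty$). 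You never identify this ingredient, and without it Goldstine applied in $X^{**}$ vector-by-vector gives weak$^*$ approximation of each image $T(e_i^{**})$ separately with no bound whatsoever on the operator norm. Second, condition (1) is \emph{not} one of your ``finitely many prescribed linear equations'': $T=\mathrm{id}$ on $E\cap X$ is a vector-valued constraint, i.e.\ the infinite family $x^*(T(e_i^{**}))=x^*(e_i^{**})$ for all $x^*\in X^*$ and $i\le p$, so a finite-system Helly statement does not cover it, and ``the constraints are consistent with the choice $x_i=e_i^{**}$'' only says the constraint set is nonempty, not that a near-optimal solution can be found inside it. To close this you must either (i) work directly with the convex set $C\subset L(E,X)$ of operators satisfying (1) and (3), check via the bipolar theorem that the inclusion $E\hookrightarrow X^{**}$ lies in the weak$^*$ closure of $C$ inside $L(E,X^{**})$, and use the Hahn--Banach distance formula $\inf_{c\in C}\norm{c}=\sup_{f\in B_{Z^*}}\inf_{c\in C}f(c)$, which holds for arbitrary convex sets and so tolerates the infinitely many constraints; or (ii) achieve (1) only approximately and then add a finite-rank correction, verifying the two repairs do not interfere (they do not: once (1) holds exactly, the residual errors in (3) vanish on $E\cap X$, so the biorthogonal correction can be chosen to vanish there, exactly the balancing issue you flag but do not resolve). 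A smaller but real slip: for $y_l\in S_E\subset X^{**}$ there need not exist $g_l\in S_{X^*}$ with $y_l(g_l)=\norm{y_l}=1$, since an element of $X^{**}$ need not attain its norm on $B_{X^*}$; you must settle for $y_l(g_l)>1-\delta'$, which is all the net argument actually needs.
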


The second of our results is a classical theorem of Sims and Yost, which exhibits an abundance of ideals in any Banach space $X$ in the following sense.

\begin{theorem}[Sims-Yost, \cite{simyos}]\label{theo:simyost}
Let $X$ be a Banach space and let $Y\subseteq X$ be a subspace. Then there exists $Z\subseteq X$ such that $Z$ is an ideal in $X$, $Y\subseteq Z\subseteq X$ and $\dens(Z)=\dens(Y)$.    
\end{theorem}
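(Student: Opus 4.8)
The plan is to enlarge $Y$ to $Z$ by a countable ``closing-off'' construction, exploiting that the finite dimensional extension problems which must be solved come in only countably many shapes. We may assume $\kappa:=\dens(Y)$ is infinite (this is the case of genuine interest; for finite dimensional $Y$ one has $\dens(Y)=\dim(Y)$, so the equality $\dens(Z)=\dens(Y)$ forces $Z=Y$ and the assertion reduces to $Y$ being $1$-complemented). Recall that, by Definition~\ref{defideal}, saying that $Z$ is an ideal in $X$ is exactly saying that $Z$ is locally $1$-complemented: every finite dimensional $E\subseteq X$ admits, for each $\varepsilon>0$, an operator $T:E\to Z$ with $\norm{T}\le 1+\varepsilon$ fixing $E\cap Z$ pointwise. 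I would build $Z=\overline{\bigcup_{n}Z_n}$ as the closure of an increasing chain $Y=Z_0\subseteq Z_1\subseteq\cdots$ of subspaces with $\dens(Z_n)\le\kappa$, where $Z_{n+1}$ is obtained from $Z_n$ by adjoining, for a fixed dense family of finite dimensional $F\subseteq Z_n$, a finite set of witnesses realising the required almost-contractive extensions.

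The heart of the matter, and the step I expect to be the main obstacle, is to carry out this one-step enlargement \emph{without letting the density grow beyond $\kappa$}, even though $\dens(X)$ may be strictly larger than $\kappa$ while the ideal condition must be tested against \emph{all} finite dimensional $E\subseteq X$. The key observation that resolves this is that the extension problems are governed by a Banach--Mazur compactum: for a fixed finite dimensional $F\subseteq Z_n$, a fixed dimension $m\ge\dim F$, and a fixed $\varepsilon$, the extensions of $\norm{\cdot}_F$ to a norm on an $m$-dimensional space containing $F$, considered up to isometries that restrict to the identity on $F$, form a totally bounded (indeed compact) metric space, and hence admit a \emph{countable} $\varepsilon$-net. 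Concretely, for each $F$ from a dense family of $Z_n$ (of cardinality $\le\kappa$), each dimension, and each rational $\varepsilon$, I would look only at the finitely-determined $\varepsilon$-types of such extensions that are \emph{actually realised} by some $E\subseteq X$ with $F\subseteq E$; there are only countably many, and for each I pick a single representative $E_\tau\subseteq X$ and adjoin to $Z_{n+1}$ a finite set of vectors spanning $E_\tau$ over $F$. Thus at stage $n$ one adds at most $\kappa$ vectors, so $\dens(Z_{n+1})\le\kappa$, and the whole construction stays within density $\kappa$.

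It remains to check that $Z$ is an ideal, which is where the countable-net choice pays off through a perturbation/type-transfer argument. Given an arbitrary finite dimensional $E\subseteq X$ and $\varepsilon>0$, set $F=E\cap Z$; since $F$ is finite dimensional and contained in $\overline{\bigcup_n Z_n}$, a small perturbation replaces it by some $\tilde F\subseteq Z_n$ from the dense family, at the cost of $\varepsilon$. The extension type of $(\tilde F,E)$ is $\varepsilon$-close to one of the representatives $(\tilde F,E_\tau)$ treated at stage $n+1$; since a spanning set of $E_\tau$ over $\tilde F$ was placed in $Z_{n+1}\subseteq Z$, the near-isometry $u:E\to E_\tau$ fixing $\tilde F$ yields an operator $T:E\to Z$ with $\norm{T}\le 1+O(\varepsilon)$ and $T\restricted{\tilde F}=\mathrm{id}$, hence $T\restricted{F}\approx\mathrm{id}$; a final perturbation makes $T$ fix $F=E\cap Z$ exactly while keeping the norm under $1+\varepsilon'$. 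Since $Y\subseteq Z$ forces $\dens(Z)\ge\kappa$ and the construction gives $\dens(Z)\le\kappa$, we conclude $\dens(Z)=\kappa$. The only delicate bookkeeping is that enlarging $Z$ can make $E\cap Z$ grow; this is handled by testing the ideal condition at the very end against the final intersection $E\cap Z$ and approximating it inside some $Z_n$, so that a witness introduced at stage $n+1$ still serves.
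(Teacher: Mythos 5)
The paper offers no proof of this statement: Theorem~\ref{theo:simyost} is quoted from \cite{simyos} as a known result, so your argument can only be measured against the classical one. Your proof is, in substance, correct, and its mechanism is genuinely different. The classical arguments (in \cite{simyos}, and in \cite{abrahamsen15} for the almost isometric version) also build $Z$ as the closure of a countable increasing chain of density-$\dens(Y)$ subspaces, but they secure the witnesses through duality --- linear Hahn--Banach extension operators and weak$^*$ compactness --- whereas you tame the unmanageably large family of test spaces $E\subseteq X$ by counting metric types of $m$-dimensional extensions of each $F$ from a dense family, which is a self-contained, more ``model-theoretic'' route. You also correctly identify and resolve the one real trap (the intersection $E\cap Z$ must be handled for the final $Z$, by approximating it inside some $Z_n$). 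A pleasant by-product of your scheme: the operators you produce are $\varepsilon$-isometries, not merely almost contractive, so the same construction proves the stronger Theorem~\ref{theo:abrahamsenai} of Abrahamsen.

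Three points deserve attention. First, your compactness claim is true but is not the Banach--Mazur compactum and is not obvious: extensions of $\norm{\cdot}_F$ can be arbitrarily degenerate, and total boundedness of the types only emerges after normalising each extension by an isomorphism fixing $F$ pointwise --- for instance, split $E=F\oplus V$ along a projection $P:E\to F$ with $\norm{P}\le\dim F$ (Auerbach basis of $F$ plus Hahn--Banach), put the restriction of the norm to $V$ in John position, and apply Arzel\`a--Ascoli to the resulting uniformly equivalent family. Alternatively, you can avoid compactness entirely: your argument only needs a countable net, i.e.\ separability, and that is immediate because $N\mapsto\log N$ restricted to the Euclidean sphere embeds all extension norms isometrically into $C(S^{m-1})$, while the type pseudometric is coarser. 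Second, your reduction to infinite $\dens(Y)$ is not a convenience but a necessity: with the paper's convention $\dens=\dim$ for finite dimensional spaces, the statement for finite dimensional $Y$ forces $Z=Y$ and thus asserts that $Y$ is $1$-complemented in $X$, which is false in general; the theorem must be read, as in \cite{simyos}, with finite dimensional $Y$ having density character $\aleph_0$. Third, two harmless imprecisions should be repaired in a write-up: the ``type of $(\tilde F,E)$'' does not parse since $\tilde F\not\subseteq E$ (use $\lspan(\tilde F\cup E)$, or the perturbed copy of $E$ over $\tilde F$), and the final correction making $T$ fix $E\cap Z$ exactly costs a factor involving the norm of a projection of $E$ onto $E\cap Z$ --- this is fine precisely because your construction treats every rational $\varepsilon$ at every stage, so the working precision can be chosen after $E$ is given.
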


A generalisation of the above theorem was proved by T. A. Abrahamsen in \cite[Theorem 1.5]{abrahamsen15}, whose establishment is included for easy reference.

\begin{theorem}[Abrahamsen, \cite{abrahamsen15}]\label{theo:abrahamsenai}
Let $X$ be a Banach space and let $Y\subseteq X$ be a subspace. Then there exists $Z\subseteq X$ such that $Z$ is an almost isometric ideal in $X$, $Y\subseteq Z\subseteq X$ and $\dens(Z)=\dens(Y)$.    
\end{theorem}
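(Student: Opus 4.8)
The plan is to adapt the closing-off construction underlying the Sims--Yost theorem (Theorem~\ref{theo:simyost}), upgrading its contractive extensions to genuine almost isometries. Writing $\kappa:=\dens(Y)$, which we may assume infinite, I would build an increasing sequence of subspaces $Y=Z_0\subseteq Z_1\subseteq Z_2\subseteq\cdots$ of $X$, each of density $\kappa$, and set $Z=\clspan\left(\bigcup_{n}Z_n\right)$. Since a countable increasing union of subspaces of density $\kappa$ again has density $\kappa$, this automatically yields $\dens(Z)=\kappa$, so the whole difficulty is concentrated in choosing the one-step enlargement from $Z_n$ to $Z_{n+1}$ in such a way that the limit $Z$ is an almost isometric ideal in $X$.

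The building block is an amalgamation lemma, and this is where the density is controlled. For each $n$ fix a dense family $\mathcal G_n$ of finite dimensional ``rational'' subspaces of $Z_n$ with $|\mathcal G_n|=\kappa$. The key observation is that, for a \emph{fixed} finite dimensional normed space $G$ and a fixed $m\in\N$, the class of normed spaces $H\supseteq G$ with $\dim H=\dim G+m$, considered up to isometries that fix $G$, is a compact metric space for a Banach--Mazur type distance, and hence admits a \emph{countable} dense net. For each $G\in\mathcal G_n$, each $m\in\N$ and each rational $\varepsilon>0$, I would add to $Z_n$ at most countably many vectors realising inside $X$ those net-extensions $H$ of $G$ that actually occur in $X$ (that is, for which some $\lspan(G\cup F)\subseteq X$ with $|F|=m$ is isometric to $H$ over $G$). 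Over the $\kappa$ many triples $(G,m,\varepsilon)$ this adds only $\kappa$ vectors, so $\dens(Z_{n+1})=\kappa$; crucially, the needed extension operators will be furnished by honest near-isometric copies living in $X$, which is exactly what produces the two-sided estimate $(1-\varepsilon)\norm{e}\le\norm{T(e)}\le(1+\varepsilon)\norm{e}$ and not merely a contraction.

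For the verification, take a finite dimensional $E\subseteq X$ and $\varepsilon>0$. Since $\bigcup_n Z_n$ is dense in $Z$ and $E\cap Z$ is finite dimensional, I would first choose some $G\in\mathcal G_n$ lying $\varepsilon$-close to $E\cap Z$; the abstract one-step extension $G\hookrightarrow E$ is then $\varepsilon$-close to a net-extension realised inside $Z_{n+1}$, and transporting the realising isometry back along $E$ yields an almost isometry $T\colon E\to Z$ which is $\varepsilon$-close to the identity on $G$. I expect the genuine obstacle to lie in the final adjustment: promoting this near-identity on the \emph{approximant} $G$ to a map that is \emph{exactly} the identity on $E\cap Z$ while preserving both isometric inequalities. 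This correction is precisely what separates the almost isometric case from the plain ideal case of Sims--Yost; it should be achievable by composing $T$ with a small finite dimensional automorphism of $\lspan(E\cup G)$ that fixes $E\cap Z$ and absorbs the error, after re-running the construction with $\varepsilon/2$ in place of $\varepsilon$. Making this perturbation respect the lower bound $(1-\varepsilon)\norm{e}$, together with checking that $E\cap Z$ is faithfully captured by the rational nets $\mathcal G_n$, is the delicate technical heart of the argument.
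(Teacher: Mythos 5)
The paper does not actually prove this statement: it is quoted verbatim as background (Theorem 1.5 of \cite{abrahamsen15}), so your attempt can only be measured against Abrahamsen's original argument, and your overall architecture --- a countable increasing chain $Y=Z_0\subseteq Z_1\subseteq\cdots$ of density-$\kappa$ subspaces, closed off under countably many finite-dimensional extension types over a $\kappa$-sized family of finite-dimensional subspaces, with $Z=\clspan\left(\bigcup_n Z_n\right)$ --- is indeed the right closing-off scheme. However, as written your witness-selection step fails. You only add to $Z_{n+1}$ realizations of those net-extensions $H\supseteq G$ that occur \emph{exactly isometrically} in $X$. The countable net is dense in the abstract space of $m$-dimensional extensions of $G$, but its members have no reason to embed isometrically into $X$ over $G$: the $X$-realizable extensions form a (typically proper) subset which a countable net of the ambient space can miss entirely. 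So in the verification, the extension you care about, $G\subseteq\lspan(G\cup E)$, is close to \emph{some} net point, but that net point need never have been realized in $Z_{n+1}$, and the argument stops. The repair is to make the net relative to $X$: for each $G\in\mathcal G_n$, each $m$ and each rational $\delta>0$, take a countable $\delta$-net of the set of extensions of $G$ that \emph{do} occur in $X$ (this set is separable, being a subset of a separable pseudometric space; compactness, which you invoke, is both doubtful and irrelevant) and add witnesses for those. Then $\lspan(G\cup E)$ is within $\delta$ of a realized witness $H'\subseteq Z_{n+1}$, giving an almost isometry $\varphi\colon\lspan(G\cup E)\to H'$ fixing $G$ pointwise.

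The second gap is the step you yourself flag as the ``technical heart'' and then defer, and your proposed fix cannot work as stated. Composing $T$ with a small automorphism of $\lspan(E\cup G)$ does not typecheck: the corrected map must still take values in $Z$, whereas an ambient automorphism moves points of $Z$ towards points of $E\setminus Z$, so the composition leaves $Z$. The correction has to be additive: restrict $\varphi$ to get a $\delta$-isometry $T\colon E\to Z$ with $\norm{T(x)-x}\leq C\delta\norm{x}$ on $E\cap Z$ (via the approximating isomorphism $E\cap Z\to G$), pick any bounded linear projection $P\colon E\to E\cap Z$ (it exists since $E$ is finite dimensional, with some norm $\lambda$ depending on $E$), and set $S:=T+(I-T)\circ P$. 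Then $S(x)=x$ for all $x\in E\cap Z$, the range of $S$ stays in $Z$ because $T(e)$, $P(e)$ and $T(P(e))$ all lie in $Z$, and $\norm{S-T}\leq C\delta\lambda$, so $S$ is an $\varepsilon$-isometry provided $\delta$ is chosen small compared with $\varepsilon/(1+\lambda)$. Note the quantifier order this forces: you cannot ``re-run the construction'' of $Z$ once $E$ is given --- $Z$ is fixed before $E$ is chosen --- but you do not need to, since witnesses were installed for \emph{every} rational tolerance, and $\lambda$ is known once $E$ is fixed. With these two corrections your scheme does yield the theorem.
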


\subsection{\texorpdfstring{$\kappa$}{kappa} injective spaces and spaces of universal disposition}

We refer the reader to \cite{accgm15,sepibook} for background on $\kappa$-injective Banach spaces. We present next the most classical known example:

\begin{example}\label{ex:linfsupinjeaco} Given any uncountable regular cardinal $\kappa$, the spaces of the form
$$\ell_\infty(\kappa,I):=\{x\in \ell_\infty(I): \vert \{i\in I: x(i)\neq 0\}\vert< \kappa\},$$
where $\kappa<\vert I\vert$, are $\kappa$ injective, but fail to be $\beta$ injective for any $\kappa<\beta$.

Let us start by proving that $\ell_\infty(\kappa,\Gamma)$ is $\kappa$ injective, for which we consider a bounded operator $t:Y\longrightarrow \ell_\infty(\kappa,I)$ where $\dens(Y)< \kappa$ and let $Z$ be a space containing $Y$. Let us prove that there exists a bounded operator $T:Z\longrightarrow \ell_\infty(\kappa,I)$ extending $t$ such that $\Vert T\Vert=\Vert t\Vert$.

Indeed, take a dense set $\{y_\alpha: \alpha<\dens(Y)\}\subseteq Y$ and consider
$$A:=\bigcup\limits_{\alpha<\dens(Y)} \{i\in I: T(y_\alpha)(i)\neq 0\}\subseteq I,$$
which is a set such that $\vert A\vert< \kappa$ since $\vert \{i\in I: T(y_\alpha)(i)\neq 0\}\vert< \kappa$. Given $i\in A$ consider $f_i:Y\longrightarrow \mathbb R$ defined by $f_i(y):=t(y)(i)$. The functionals  $f_i$ are linear and continuous (and moreover it is not difficult to see that $\Vert T\Vert=\sup\limits_{i\in A}\Vert f_i\Vert$), so by the Hahn-Banach theorem there exists some $F_i:Z\longrightarrow \mathbb R$ extending $f_i$ and such that $\Vert F_i\Vert=\Vert f_i\Vert$ holds for every $i\in A$. Define $T:Z\longrightarrow \ell_\infty(\kappa,I)$ by the equation
$$T(z)(i):=\left\{\begin{array}{cc}
    F_i(z) & i\in A \\
    0 & i\notin A. 
\end{array}\right.$$
It is immediate that $T$ is well defined since $\vert A\vert< \kappa$. It is also immediate that $T(y)=t(y)$ for any $y \in Y$ since $F_i(y)=f_i(y)$ holds for every $i\in A$ and that $\Vert \hat T\Vert=\sup_{i\in A}\Vert F_i\Vert=\sup_{i\in I} \Vert f_i\Vert=\Vert T\Vert$.

In order to prove that $\ell_\infty(\kappa,I)$ is not $\beta$ injective for any cardinal $\beta$ with $\kappa<\beta$, select one such $\beta$ and consider, for any $\alpha<\kappa$, the closed ball $B_\alpha:=B(2e_\alpha,1)$ where we see $\kappa\subseteq I$ by the assumption $\kappa <\vert I\vert$. The family of balls $\{B_\alpha: \alpha<\kappa\}\cup\{B(0,1)\}$ are pairwise intersecting (observe that $e_\alpha+e_{\alpha'}\in B_\alpha\cap B_{\alpha'}$), but we claim that
$$\bigcap\limits_{\alpha<\kappa}B_\alpha\cap B(0,1)=\emptyset.$$
Indeed, if $x\in \bigcap_{\alpha<\kappa}B_\alpha\cap B(0,1)$ we would get $1\geq\Vert 2e_\alpha-x\Vert\geq \vert 2-x(\alpha)\vert=2-x(\alpha)$, from where $x(\alpha)\geq 1$ for every $\alpha<\kappa$ and hence $\kappa\le \vert \{i\in I: x(i)\neq 0\}\vert$, which is impossible. Consequently, we have constructed a family of balls of cardinality $\kappa$ which are pairwise intersecting but whose intersection is empty. By \cite[Proposition 5.12]{sepibook} the space $\ell_\infty(\kappa,I)$ is not $\beta$ injective, as desired.
\end{example}

\begin{remark}\label{rema:examkappasupfunctions} Observe that, in the above example, we have gone beyond and proved that $\ell_\infty(\kappa,I)$ is indeed a \textit{universally $\kappa$ injective} Banach space according to \cite[Definition 1.1]{accgm15}. We refer the interested reader to \cite{accgm15,sepibook} for background.
\end{remark}

Concerning spaces of almost universal disposition we will only need a consequence of \cite[Theorem 4.2]{gk11}: given any Banach space $X$ and any uncountable cardinal $\kappa$ there exists a Banach space $Y$ of universal disposition for density $\kappa$ such that $X\subseteq Y$.

\subsection{Almost square and octahedral Banach spaces}

We will follow the notation of \cite{lan}. Given a Banach space $X$, we say that $X$ is:
\begin{enumerate}
    \item \textit{octahedral} if, given any finite dimensional subspace $Y$ of $X$ and any $\varepsilon>0$, there exists some $x\in S_X$ such that
    $$\Vert y+\lambda x\Vert\geq (1-\varepsilon)(\Vert y\Vert+\vert\lambda\vert)\ \forall y\in Y, \forall \lambda\in\mathbb R.$$
    \item \textit{almost square} (ASQ) if, given any finite dimensional subspace $Y$ of $X$ and any $\varepsilon>0$, there exists some $x\in S_X$ such that
    $$\Vert y+\lambda x\Vert\le (1+\varepsilon)\max\{\Vert y\Vert,\vert\lambda\vert\}\ \forall y\in Y, \forall \lambda\in\mathbb R.$$
\end{enumerate}

The notion of octahedral norms goes back to the end of the eighties and turns out to characterise the containment of $\ell_1$ in the following terms: a Banach space $X$ contains an isomorphic copy of $\ell_1$ if, and only if, there exists an equivalent octahedral norm on $X$ \cite{god89}. The notion of ASQ spaces is quite more recent and goes back to \cite{all16}, where the authors aimed to analyse this property as a strengthening of the fact that every slice of the unit ball has diameter 2. Moreover, it turns out that, in the same way that octahedrality characterises the presence of $\ell_1$, ASQ spaces describes the containment of $c_0$ in the following sense: a Banach space $X$ contains an isomorphic copy of $c_0$ if, and only if, $X$ admits an equivalent ASQ renorming \cite[Corollary 2.4]{blr16}.

We refer the reader to \cite{all16,acllr23,blr16,cll23,lan} for background on octahedral and ASQ spaces.

\subsection{Tensor product notation}\label{subsc:tensorproduct}

The projective tensor product of $X$ and $Y$, denoted by $X \projtensor Y$, is the completion of the algebraic tensor product $X \otimes Y$ endowed with the norm
$$
\|z\|_{\pi} := \inf \left\{ \sum_{n=1}^k \|x_n\| \|y_n\|: z = \sum_{n=1}^k x_n \otimes y_n \right\},$$
where the infimum is taken over all such representations of $z$. The reason for taking completion is that $X\otimes Y$ endowed with the projective norm is complete if, and only if, either $X$ or $Y$ is finite dimensional (see \cite[P.43, Exercises 2.4 and 2.5]{ryan}).

It is well known that $\|x \otimes y\|_{\pi} = \|x\| \|y\|$ for every $x \in X$, $y \in Y$, and that the closed unit ball of $X \projtensor Y$ is the closed convex hull of the set $B_X \otimes B_Y = \{ x \otimes y: x \in B_X, y \in B_Y \}$. Throughout the paper, we will use of both facts without any explicit reference.

Observe that the action of a bounded operator $G\colon X \longrightarrow Y^*$ as a linear functional on $X \projtensor Y$ is given by
$$
G \left( \sum_{n=1}^{k} x_n \otimes y_n \right) = \sum_{n=1}^{k} G(x_n)(y_n),$$
for every $\sum_{n=1}^{k} x_n \otimes y_n \in X \otimes Y$. This action establishes a linear isometry from $L(X,Y^*)$ onto $(X\projtensor Y)^*$ (see e.g. \cite[Theorem 2.9]{ryan}). All along this paper we will use the isometric identification $(X\projtensor Y)^*=  L(X,Y^*)$ without any explicit mention.

Observe also that given two bounded operators $T\colon X\longrightarrow Z$ and $S\colon Y\longrightarrow W$, we can define an operator $T\otimes S\colon X\projtensor Y\longrightarrow Z\projtensor W$ by the action $(T\otimes S)(x\otimes y):=T(x)\otimes S(y)$ for $x\in X$ and $y\in Y$. It follows that $\Vert T\otimes S\Vert=\Vert T\Vert\Vert S\Vert$. Moreover, it is known that if $T,S$ are bounded projections then so is $T\otimes S$. Consequently, if $Z\subseteq X$ is a $1$-complemented subspace, then $Z\projtensor Y$ is a $1$-complemented subspace of $X\projtensor Y$ in the natural way (see \cite[Proposition 2.4]{ryan} for details). A generalisation of this fact is that  if $Z\subseteq X$ is an ideal, then $Z\projtensor Y$ is an ideal in $X\projtensor Y$ in the natural way (see \cite[Theorem 1]{rao01}).


Recall that given two Banach spaces $X$ and $Y$, the
\textit{injective tensor product} of $X$ and $Y$, denoted by
$X \injtensor Y$, is the completion of $X\otimes Y$ under the norm given by
\begin{equation*}
   \Vert u\Vert_{\varepsilon}:=\sup
   \left\{
      \sum_{i=1}^n \vert x^*(x_i)y^*(y_i)\vert
      : x^*\in S_{X^*}, y^*\in S_{Y^*}
   \right\},
\end{equation*}
where $u=\sum_{i=1}^n x_i\otimes y_i$ (see \cite[Chapter 3]{ryan} for background).
Every $u \in X \injtensor Y$ can be viewed as an operator $T_u : X^* \longrightarrow Y$ which is weak$^*$-weak continuous. Under this point of view, the norm on the injective tensor product is nothing but the operator norm.

As well as happen with the projective tensor product, given two bounded operators $T\colon X\longrightarrow Z$ and $S\colon Y\longrightarrow W$, we can define an operator $T\otimes S\colon X\injtensor Y\longrightarrow Z\injtensor W$ by the action $(T\otimes S)(x\otimes y):=T(x)\otimes S(y)$ for $x\in X$ and $y\in Y$. It follows that $\Vert T\otimes S\Vert=\Vert T\Vert\Vert S\Vert$. However, this time we get that if $T,S$ are linear isometries then $T\otimes S$ is also a linear isometry (c.f. e.g. \cite[Section 3.2]{ryan}). This fact is commonly known as ``the injective tensor product respects subspaces''.


\subsection{Lipschitz-free spaces}\label{subsc:lipschitzfree}

A \emph{pointed metric space} is just a metric space $M$ in which we distinguish an element, denoted by $0$. Given a pointed metric space $M$, we write $\Lip(M)$ to denote the Banach space of all Lipschitz maps $f:M\longrightarrow \mathbb R$ which vanish at $0$, endowed with the Lipschitz norm defined by
$$ \| f \| := \sup\left\{\frac{f(x)-f(y)}{d(x,y)} \colon x,y\in M,\, x \neq y \right\}.$$
We denote by $\delta$ the canonical isometric embedding of $M$ into $\Lip(M)^*$ which is given by $\langle f, \delta(x) \rangle =f(x)$ for $x \in M$ and $f \in \Lip(M)$. The closed linear span of $\delta(M)$ in the dual space $\Lip(M)^*$ is denoted by $\mathcal F(M)$ and called \textit{the Lipschitz-free space over $M$}. See the papers \cite{god15} and \cite{gk}, the PhD dissertation \cite{aliaga21} and the book \cite{weaver18} (where it receives the name of Arens-Eells space) for background on these spaces. It is well known that $\mathcal{F}(M)$ is an isometric predual of the space $\Lip(M)$ \cite[pp. 91]{god15}. We call \textit{molecule in $\mathcal F(M)$} any element of the form
$$m_{x,y}:=\frac{\delta(x)-\delta(y)}{d(x,y)}$$
for $x,y\in M$ with $x\neq y$.

Let us recall that when $M$ and $N$ are pointed metric spaces, it is well known that every Lipschitz function $f \colon N \longrightarrow M$ which preserves the origin can be isometrically identified with the continuous linear operator $\widehat{f} \colon \mathcal{F}(N) \longrightarrow \mathcal F(M)$ characterised by $\widehat{f}(\delta(p))=\delta(f(p))$ for every $p \in N$. 

Observe that it is well known (c.f. e.g. \cite[Lemma 2.1]{aliper20}) that the elements of $\mathcal F(M)$ consists of all functionals $\gamma\in\Lip(X)^*$ of the form
$$
\gamma=\sum_{n=1}^\infty\lambda_n m_{x_n,y_n},
$$
with $\left(\lambda_n\right)_{n=1}^\infty\in\ell_1$ and $x_n,y_n\in M$ are such that $x_n\neq y_n$ holds for every $n\in\mathbb N$. Moreover,
$$
\left\|\gamma\right\|=\inf\left\{\sum_{n=1}^\infty\left|\lambda_n\right|\colon \gamma=\sum_{n=1}^\infty\lambda_n m_{x_n,y_n},\, \left(\lambda_n\right)_{n=1}^\infty\in\ell_1,\, x_n\neq y_n\in M\right\}.
$$

\subsection{Ultrapowers of Banach spaces}\label{subsect:ultrapowers}

Given a Banach space $X$ and an infinite set $I$, we denote  $\ell_\infty(I,X):=\{f\colon I\longrightarrow X: \sup_{i\in I}\Vert f(i)\Vert<\infty\}$. Given a free ultrafilter $\mathcal U$ over $I$, consider $N_\mathcal U:=\{f\in \ell_\infty(I,X): \lim_\mathcal U \Vert f(i)\Vert=0\}$. The \textit{ultrapower of $X$ with respect to $\mathcal U$} is
the Banach space
$$X_\mathcal U:=\ell_\infty(I,X)/N_\mathcal U.$$
We will naturally identify a bounded function $f\colon I\longrightarrow X$ with the element $(f(i))_{i\in I}$. In this way, we denote by $[x_i]_{\mathcal U,i}$ or simply by $[x_i]_\mathcal U$, if no confusion is possible, the coset in $X_\mathcal U$ given by $(x_i)_{i\in I}+N_\mathcal U$.

From the definition of the quotient norm, it is not difficult to prove that $\Vert [x_i]_\mathcal U\Vert=\lim_\mathcal U \Vert x_i\Vert$ for every $[x_i]_\mathcal U\in X_\mathcal U$. This implies that the canonical inclusion $j:X\longrightarrow X_\mathcal U$ given by the equation
$$j(x):=[x]_\mathcal U$$
is an into linear isometry. It is well known (c.f. e.g. Propositions 6.1 and 6.2 in \cite{hein80}) that $X_\mathcal U$ is finitely representable in $j(X)$. However, from an inspection of the proofs and the operators used there it can be concluded that $j(X)$ is indeed an almost isometric ideal in $X_\mathcal U$.

We refer the reader to \cite[Section 4.1]{sepibook} for background on ultrapowers of Banach spaces.

\section{Transfinite ideals}\label{sect:transideal}

We begin with the formal establishment of the notion of $\kappa$ ideals in Banach spaces.

\begin{definition}\label{defi:transideal}
    Let $X$ be a Banach space, $Y \subset X$ be a subspace of $X$ and $\kappa$ an infinite cardinal. We say that $Y$ is a \textit{$\kappa$ ideal in $X$} if for every subspace $E \subset X$ with $\dens(E) < \kappa$ there exists a linear operator $P: E \ra Y$ with $\norm{P} = 1$ such that
    $$P(e) = e, \, \forall e \in E \cap Y.$$
\end{definition}

\begin{remark}
Observe that $\aleph_0$ ideals are not the classical ideals, but the ones when one consider $\eps = 0$ in Definition \ref{defideal} (1). In a similar fashion we could have defined an ``almost version'' for $\kappa$ ideals in the following sense:

Given $Y\subseteq X$ we say that $Y$ is a \textit{$\kappa$ almost ideal in $X$} if for every subspace $E \subset X$ with $\dens(E) < \kappa$ and every $\varepsilon>0$ there exists a linear operator $P: E \ra Y$ with $\norm{P} \le 1+\varepsilon$ such that $P(e) = e$ for every $e \in E \cap Y$.

In that case, $\aleph_0$ almost ideals would be the classical ideals, but the reason for not considering this (formally) weaker notion is two-fold: on the one hand, we aimed to characterize $\kappa$ injective spaces and $\kappa$ ideals were more suitable for that and, on the other hand, we did not find any example of spaces $Y\subseteq X$ such that $Y$ is $\kappa$ almost ideal in $X$ but which is not a $\kappa$ ideal in $X$ out of the case that $\kappa=\aleph_0$.
\end{remark}

Next we will show that the notion of $\kappa$ ideal in Banach spaces is the projective characterisation of $\kappa$ injectivity we were looking for. 

\begin{theorem}\label{charinj1}
Let $X$ be a Banach space and $\kappa$ an uncountable cardinal. The following are equivalent:
\begin{enumerate}
    \item $X$ is $\kappa$ injective.
    \item $X$ is a $\kappa$ ideal in every Banach space that contains it.
\end{enumerate}    
\end{theorem}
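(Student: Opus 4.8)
The plan is to prove the two implications separately. The implication $(1)\Rightarrow(2)$ is essentially a direct reading of the definitions, while $(2)\Rightarrow(1)$ requires turning an abstract extension problem into a situation where the $\kappa$ ideal property can be invoked; I would do this by a pushout construction.

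\textbf{Proof of $(1)\Rightarrow(2)$.} Let $Z$ be a Banach space containing $X$ isometrically and fix a subspace $E\subseteq Z$ with $\dens(E)<\kappa$. I would set $Y:=E\cap X$ and let $t:Y\ra X$ be the inclusion, which is an isometric embedding, so $\norm{t}=1$ whenever $Y\neq\{0\}$. Since $Y\subseteq E$ and $\dens(E)<\kappa$, the $\kappa$ injectivity of $X$ produces a norm-preserving extension $P:E\ra X$ of $t$; then $\norm{P}=\norm{t}=1$ and $P(e)=e$ for every $e\in E\cap X$, so $X$ is a $\kappa$ ideal in $Z$. The only degenerate point is $E\cap X=\{0\}$, where the identity requirement is vacuous and the norm-preserving extension of $t=0$ would have norm $0$; here one simply takes $P=\phi(\cdot)\,x_0$ for some $\phi\in S_{E^*}$ and $x_0\in S_X$ to meet $\norm{P}=1$.

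\textbf{Proof of $(2)\Rightarrow(1)$.} Suppose $Y\subseteq Z$ with $\dens(Z)<\kappa$ and let $t:Y\ra X$ be bounded; after rescaling I may assume $\norm{t}=1$. I would form the pushout $W:=(X\oplus_1 Z)/\Delta$ of the inclusion $Y\hookrightarrow Z$ and of $t$, where $\Delta:=\{(t(y),-y):y\in Y\}$ is a closed subspace because $y\mapsto(t(y),-y)$ is bounded below. Let $j_X:X\ra W$ and $j_Z:Z\ra W$ be the natural maps $x\mapsto[(x,0)]$ and $z\mapsto[(0,z)]$. Using $\norm{t}\le1$, a short computation gives $\norm{[(x,0)]}=\inf_{y\in Y}\left(\norm{x+t(y)}+\norm{y}\right)=\norm{x}$, so $j_X$ is an isometric embedding, while $j_Z$ is a contraction and one has the crucial pushout identity $j_Z(y)=j_X(t(y))$ for all $y\in Y$. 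Now I would apply the hypothesis to the superspace $W$: the subspace $E:=\overline{j_Z(Z)}$ satisfies $\dens(E)\le\dens(Z)<\kappa$, so since $j_X(X)\cong X$ is a $\kappa$ ideal in $W$ there is a norm-one $P:E\ra j_X(X)$ with $P(e)=e$ for $e\in E\cap j_X(X)$. Setting $T:=j_X^{-1}\circ P\circ j_Z:Z\ra X$ gives $\norm{T}\le1$, and for $y\in Y$ the identity $j_Z(y)=j_X(t(y))\in E\cap j_X(X)$ forces $P(j_Z(y))=j_X(t(y))$, whence $T(y)=t(y)$. Thus $T$ extends $t$ with $\norm{T}=\norm{t}$, which is the $\kappa$ injectivity of $X$.

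\textbf{Main obstacle.} The delicate point in $(2)\Rightarrow(1)$ is that the pushout $W$ may have density far larger than $\kappa$ (at least $\dens(X)$), so the $\kappa$ ideal property cannot be applied to $W$ itself; the whole design of the argument is to apply it only to the small subspace $E=\overline{j_Z(Z)}$, whose density is controlled by $\dens(Z)<\kappa$, while the pushout relation $j_Z(y)=j_X(t(y))$ retains exactly the information needed to reconstruct an extension of $t$. The other step requiring care is verifying that $j_X$ is an isometry, so that $j_X^{-1}$ is meaningful on $j_X(X)$; this is precisely where the normalisation $\norm{t}\le1$ enters, through the estimate $\norm{x+t(y)}+\norm{y}\ge\norm{x}$.
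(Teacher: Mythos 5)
Your proposal is correct, and while your $(1)\Rightarrow(2)$ coincides with the paper's, your $(2)\Rightarrow(1)$ takes a genuinely different route. The paper does not build a pushout: it embeds $X$ isometrically into an injective space $W$ linearly isometric to some $\ell_\infty(\Gamma)$, uses the injectivity of $W$ to extend $t:Y\longrightarrow X$ to $T':Z\longrightarrow W$ with $\Vert T'\Vert=\Vert t\Vert$, and then applies hypothesis (2) inside $W$ to the small subspace $Z'=T'(Z)$ (which has $\dens(Z')\le\dens(Z)<\kappa$), obtaining a norm-one $P:Z'\longrightarrow X$ fixing $Z'\cap X$; the composite $P\circ T'$ is the desired extension. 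Your argument replaces the ambient injective space by a bespoke superspace, the pushout, in which the extension problem becomes tautological through the identity $j_Z(y)=j_X(t(y))$. What the paper's route buys is brevity and freedom from completeness worries ($\ell_\infty(\Gamma)$ is already a Banach space, and coordinatewise Hahn--Banach extension works for arbitrary normed domains). What your route buys is uniformity: it is precisely the scheme the paper itself uses for the isometric analogue (Theorem~\ref{AUDchar}), where no injective ambient space can help because the embedding and the extension must respect the isometric structure; with your argument, both theorems are proved by one and the same construction.

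One step needs an extra line. In the definition of $\kappa$ injectivity, $Y\subseteq Z$ are merely normed spaces, so $\Delta\cong Y$ need not be complete (your ``bounded below, hence closed'' claim requires completeness of $Y$), and $X\oplus_1 Z$, hence $W$, need not be a Banach space --- yet hypothesis (2) only applies to \emph{Banach} spaces containing $X$. This is easily repaired: either first replace $Z$ by its completion and $t$ by its continuous extension (into the complete space $X$) to the closure of $Y$, exactly as the paper does at the start of the proof of Theorem~\ref{AUDchar}, or quotient by $\overline{\Delta}$ and pass at the end to the completion of $W$. None of your norm computations change, since the quotient seminorm is in either case the infimum of distances to elements of $\Delta$, and $j_X(X)$ remains closed in the completion because $X$ is complete and $j_X$ is isometric.
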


Compare the above result with \cite[Proposition 3.4]{fakh72}, where it is proved that a Banach space $X$ is an $L_1$ predual if, and only if, $X$ is an ideal in any Banach space containing it.

The proof is analogous to the classical one for injective spaces, but we will include it for completeness.

\begin{proof}
We first suppose (1), that is, that $X$ is $\kappa$ injective, and we will show that if $Y$ is a Banach space such that $Y \supset X$, then $X$ is a $\kappa$ ideal in $Y$.

If $E \subset Y$ with $\dens(E) < \kappa$ and $E \cap X = \{0\}$, then we can take $P: E \ra X$ as any norm $1$ linear operator. So let us suppose that $E \cap X$ is not the trivial space.

In this case, denoting by $\iota: E \cap X \ra X$ the inclusion operator, as $\dens(E \cap X) \le \dens(E) < \kappa$, we can extend it to an operator $P: E \ra X$ with $\norm{P} = \norm{\iota} = 1$ and it will obviously be the desired projection.

For the other implication, assume that (2) holds and let us prove that given two normed spaces $Y \subset Z$ such that $\dens(Z) < \kappa$ and an operator $t: Y \ra X$, we can extend it to some operator $T: Z \ra X$ with the same norm.

If $t = 0$ it is clear that we can extend it to the null operator from $Z$ to $X$ and it will preserve the norm, so let us consider that $t$ is not null. Then, we can find a Banach space $W$ which is linearly isometric to some $\linf{\Gamma}$ and such that $X \subset W$, and define $t': Y \ra W$ as $t' = \iota \circ t$ with $\iota: X \ra W$ the inclusion operator. As $W$ is injective, we can extend $t'$ to some operator $T': Z \ra W$ in such a way that $\norm{T'} = \norm{t'}$.

By denoting $Z' = T'(Z) \subset W$ we have that $\dens(Z') \le \dens(Z) < \kappa$ and by (2) we obtain that there is some linear operator $P: Z' \ra X$ with $\norm{P} = 1$ such that $P(z) = z$ for any $z \in Z' \cap X$. By defining $T: Z \ra X$ as $T= P \circ T'$ we obtain the desired extension.
\end{proof}

The connection between the property of being an $L_1$ predual and the notion of ideals in Banach spaces goes further on thanks to \cite[Theorem 1]{rao13} (the proof is based on a characterisation of $L_1$ preduals in terms of a property of intersection of balls, see also \cite[Lemma 2.1]{rueda21}, where the result is proved making use of a property of extensions of operators), where it is proved that if $X$ is an $L_1$ predual and $Y$ is an ideal in $X$ then the space $Y$ is an $L_1$ predual too. Making an adaptation of the argument of \cite[Lemma 2.1]{rueda21}, we prove that a $\kappa$ ideal in a $\kappa$ injective Banach space is a $\kappa$ injective Banach space.

\begin{theorem}\label{injsubsp}
Let $\kappa$ an infinite cardinal, $X$ be a $\kappa$ injective space and $Y$ be a subspace of $X$. Then, $Y$ is $\kappa$ injective if, and only if, it is a $\kappa$ ideal in $X$.    
\end{theorem}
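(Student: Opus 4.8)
The plan is to prove both implications by the same extend-then-project scheme, with the genuine content lying in the ``if'' direction. For the ``only if'' part I would simply repeat the argument of the implication $(1)\Rightarrow(2)$ in Theorem~\ref{charinj1}: if $Y$ is $\kappa$ injective and $E\subseteq X$ has $\dens(E)<\kappa$, then either $E\cap Y=\{0\}$, in which case any norm-one operator $P:E\ra Y$ works, or else the inclusion $\iota:E\cap Y\ra Y$ can be extended, using that $\dens(E\cap Y)\le \dens(E)<\kappa$ and that $Y$ is $\kappa$ injective, to an operator $P:E\ra Y$ with $\norm{P}=\norm{\iota}=1$ satisfying $P(e)=e$ for every $e\in E\cap Y$. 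This $P$ witnesses that $Y$ is a $\kappa$ ideal in $X$, and I note that this direction does not require $\kappa$ to be uncountable.

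For the ``if'' direction, assume $X$ is $\kappa$ injective and $Y$ is a $\kappa$ ideal in $X$, and let me verify the defining extension property of $\kappa$ injectivity for $Y$. So fix normed spaces $A\subseteq B$ with $\dens(B)<\kappa$ and a bounded operator $t:A\ra Y$; I must produce a norm-preserving extension $T:B\ra Y$. Composing $t$ with the (isometric) inclusion $Y\hookrightarrow X$ I regard $t$ as an operator into $X$; since $X$ is $\kappa$ injective and $\dens(B)<\kappa$, there is an extension $T':B\ra X$ with $\norm{T'}=\norm{t}$.

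Now I would push the image back into $Y$ using the ideal hypothesis. Set $E:=\clspan(T'(B))\subseteq X$; continuity of $T'$ together with $\dens(B)<\kappa$ gives $\dens(E)\le \dens(B)<\kappa$, so the $\kappa$ ideal property of $Y$ provides $P:E\ra Y$ with $\norm{P}=1$ and $P(e)=e$ for all $e\in E\cap Y$. Define $T:=P\circ T':B\ra Y$. For $a\in A$ one has $T'(a)=t(a)\in Y$ and $T'(a)\in E$, hence $T'(a)\in E\cap Y$ and $T(a)=P(T'(a))=T'(a)=t(a)$, so $T$ extends $t$; moreover $\norm{T}\le\norm{P}\,\norm{T'}=\norm{t}$, while $\norm{T}\ge\norm{t}$ because $T$ extends $t$, giving $\norm{T}=\norm{t}$. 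This establishes the $\kappa$ injectivity of $Y$.

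The argument is short because the two hypotheses dovetail perfectly: $\kappa$ injectivity of $X$ supplies the extension into the ambient space and the $\kappa$ ideal property of $Y$ supplies the norm-one projection back onto $Y$, exactly as in the implication $(2)\Rightarrow(1)$ of Theorem~\ref{charinj1} with $X$ now playing the role there played by an injective superspace $\ell_\infty(\Gamma)$. The only points demanding a little care---and the closest thing to an obstacle---are bookkeeping ones: checking that passing to $E=\clspan(T'(B))$ does not raise the density above $\kappa$, and observing that $P$ fixes precisely the vectors $T'(A)\subseteq E\cap Y$ that are needed for $T$ to restrict to $t$ on $A$.
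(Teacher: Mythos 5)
Your proof is correct and follows essentially the same route as the paper: the ``only if'' direction is exactly the argument of (1)$\Rightarrow$(2) in Theorem~\ref{charinj1} (which the paper simply cites), and the ``if'' direction is the same extend-then-project scheme, using $\kappa$ injectivity of $X$ to extend into $X$ and the $\kappa$ ideal property of $Y$ to compose with a norm-one map fixing $E\cap Y$. The only cosmetic differences are that you take $E=\clspan(T'(B))$ where the paper uses $T'(W)$ directly, and that you spell out the norm equality and the observation that the ``only if'' direction needs no uncountability, both of which the paper leaves implicit.
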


\begin{proof}
If $Y$ is $\kappa$ injective (so it is complete), then it is a $\kappa$ ideal in $X$ by Theorem~\ref{charinj1}, so it is enough to prove the other implication, that is, we must see that given two  normed spaces $Z \subset W$ with $\dens(W) < \kappa$ and a linear operator $t: Z \ra Y$, there exits an extension $T: W \ra Y$ with $\norm{T} = \norm{t}$.

As $X$ is $\kappa$ injective we can extend the operator $t' = \iota \circ t : Z \ra X$, where $\iota: Y \ra X$ stands for the inclusion operator, to an operator $T' = W \ra X$ with $\norm{T'} = \norm{t'}$. Furthermore, as $Y$ is a $\kappa$ ideal in $X$ and
$$\dens(T'(W)) \le \dens(W) < \kappa,$$
there exists an operator $P: T'(W) \ra Y$ of norm $1$ that fixes the elements in $T'(W) \cap Y$. By defining $T = P \circ T'$ we obtain the desired extension.
\end{proof}

Bearing the above results in mind we get the following corollary, which sums up a characterisation of $\kappa$ injective Banach spaces.

\begin{corollary}
Let $X$ be a Banach space and $\kappa$ be an infinite cardinal. The following assertions are equivalent:
\begin{enumerate}
    \item $X$ is $\kappa$ injective.
    \item $X$ is a $\kappa$ ideal in every Banach space $X$ containing it.
    \item $X$ is a $\kappa$ ideal in every $\kappa$ injective Banach space containing it.
    \item $X$ is an $L_1$ predual and it is a $\kappa$ ideal in $X^{**}$.
\end{enumerate}
\end{corollary}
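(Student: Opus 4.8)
The plan is to prove the equivalence of the four assertions by establishing a cycle of implications, leaning heavily on the results already available in the excerpt. The core engine is Theorem~\ref{charinj1}, which gives $(1)\Leftrightarrow(2)$ directly, together with Theorem~\ref{injsubsp}, which relates $\kappa$ ideals inside a $\kappa$ injective space to $\kappa$ injectivity of the subspace. I would first record $(1)\Leftrightarrow(2)$ as an immediate restatement of Theorem~\ref{charinj1}, then show $(2)\Rightarrow(3)$ trivially (since being a $\kappa$ ideal in \emph{every} containing space in particular forces it in every $\kappa$ injective containing space), and finally close the loop by proving $(3)\Rightarrow(1)$ and handling $(4)$.

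For the implication $(3)\Rightarrow(1)$, the key observation is that every Banach space $X$ embeds isometrically into some $\kappa$ injective space; indeed one may take $W\cong\linf{\Gamma}$ for a suitable $\Gamma$ (as already used in the proof of Theorem~\ref{charinj1}), and $\linf{\Gamma}$ is injective, hence $\kappa$ injective for every $\kappa$. Thus by $(3)$ the space $X$ is a $\kappa$ ideal in this particular $\kappa$ injective $W$; now Theorem~\ref{injsubsp} applies with the ambient space being $W$ and the subspace being $X$, yielding that $X$ is $\kappa$ injective. This is the cleanest route and avoids reproving anything.

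The remaining work is the equivalence with $(4)$. For $(1)\Rightarrow(4)$: a $\kappa$ injective space is in particular an ideal in every containing space (the $\kappa=\aleph_0$, $\eps=0$ case is subsumed, and more directly $\kappa$ injectivity with uncountable $\kappa$ yields ordinary injectivity-type extension on finite dimensional domains), so by \cite[Proposition 3.4]{fakh72} it is an $L_1$ predual; and since $X\subseteq X^{**}$ with $X^{**}$ an $L_1$ predual whenever $X$ is, the fact that $X$ is a $\kappa$ ideal in $X^{**}$ follows from $(1)\Rightarrow(2)$ applied to the containing space $X^{**}$. For $(4)\Rightarrow(1)$: since $X$ is an $L_1$ predual, $X^{**}$ is an injective (hence $\kappa$ injective) $L_1$ predual; being a $\kappa$ ideal in the $\kappa$ injective space $X^{**}$, Theorem~\ref{injsubsp} again gives that $X$ is $\kappa$ injective.

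The main obstacle I anticipate is the passage to the bidual in $(4)$: one must be careful that $X^{**}$ is genuinely $\kappa$ injective. This rests on the classical fact that the bidual of an $L_1$ predual is itself an $L_1$ predual of the form $C(K)$-dual type, in fact an injective space (a $1$-complemented $C(K)$ space), and injective spaces are $\kappa$ injective for all $\kappa$ since the extension property for arbitrary normed domains is stronger than the $\dens(Z)<\kappa$ version. Once this structural fact is invoked, Theorem~\ref{injsubsp} does all the heavy lifting and the argument is routine; the delicate point is merely citing the correct classical result that identifies $X^{**}$ as injective when $X$ is an $L_1$ predual.
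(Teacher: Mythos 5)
Your proposal is correct and follows essentially the same route as the paper: $(1)\Leftrightarrow(2)$ from Theorem~\ref{charinj1}, the trivial $(2)\Rightarrow(3)$, $(3)\Rightarrow(1)$ via embedding into an injective $\ell_\infty(\Gamma)$ and Theorem~\ref{injsubsp}, the $L_1$-predual part of $(4)$ from Fakhoury's result applied to $X$ being an ideal in every superspace, and the return from $(4)$ via Lindenstrauss's theorem that $X^{**}$ is injective together with Theorem~\ref{injsubsp}. The only cosmetic difference is that you route $(1)\Rightarrow(4)$ and $(4)\Rightarrow(1)$ where the paper writes $(2)\Rightarrow(4)$ and $(4)\Rightarrow(3)$, which is the same argument given that $(1)$, $(2)$, $(3)$ are already known to be equivalent.
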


\begin{proof}
(1)$\Leftrightarrow$(2) is proved in Theorem \ref{charinj1}. (2)$\Rightarrow$(3) is immediate, whereas (3)$\Rightarrow$(1) follows by Theorem \ref{injsubsp} and by the fact that $X$ can be embedded in a suitable $\ell_\infty(\Gamma)$, which is an injective Banach space.

(2)$\Rightarrow$(4) follows since (2) implies that $X$ is an ideal in any Banach space containing it, so in particular $X$ is an $L_1$ predual by \cite[Proposition 3.4]{fakh72}.

(4)$\Rightarrow$(3) also follows from Theorem \ref{injsubsp} since $X^{**}$ is an injective Banach space by the assumption that $X$ is an $L_1$ predual \cite[Theorem 6.1]{linds64}. 
\end{proof}

Now we can find examples of pairs of spaces $Y\subseteq X$ such that $Y$ is a $\kappa$ ideal in $X$ but $Y$ is not an $\alpha$ ideal in $X$ for any cardinal $\alpha>\kappa > \aleph_0$ (and henceforth such that $Y$ is not $1$-complemented in $X$).

\begin{example} Let $\kappa<\alpha$ be two infinite uncountable cardinals and let $X$ be a $\kappa$ injective Banach space which is not $\alpha$ injective. Then:
\begin{enumerate}
    \item $X$ is a $\kappa$ ideal in $X^{**}$.
    \item Given any set $\Gamma$ such that $X\subseteq \ell_\infty(\Gamma)$ isometrically, $X$ is a $\kappa$ ideal in $\ell_\infty(\Gamma)$.
\end{enumerate}
\end{example}

We end the section by providing several examples which pursue to find examples of $\kappa$ ideals out of the theory of $\kappa$ injective Banach spaces. Indeed, we aim to find such examples out of the class of $L_1$ preduals. To begin with, we need to go to tensor product spaces. We refer the reader to Subsection~\ref{subsc:tensorproduct} for the necessary notation.

\begin{theorem}\label{tensor}
Let $X$ and $Y$ be Banach spaces, $\kappa$ an uncountable cardinal and $E \subset X$, $F \subset Y$ $\kappa$ ideals in $X$ and $Y$ respectively. Then $E \tensor F$ is a $\kappa$ ideal in $X \tensor Y$, where $\tensor$ represents the injective or projective tensor product.
\end{theorem}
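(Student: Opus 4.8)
The plan is to verify the defining property of a $\kappa$ ideal directly: given a subspace $G \subset X \tensor Y$ with $\dens(G) < \kappa$, I would produce a norm-one operator $P \colon G \ra E \tensor F$ fixing every point of $G \cap (E \tensor F)$. The engine is the tensor operator $P_X \otimes P_Y$, where $P_X \colon A \ra E$ and $P_Y \colon B \ra F$ arise from applying the $\kappa$ ideal hypotheses to suitable subspaces $A \subset X$ and $B \subset Y$ of density $< \kappa$; recall from Subsection~\ref{subsc:tensorproduct} that $\norm{P_X \otimes P_Y} = \norm{P_X} \norm{P_Y} = 1$ in both tensor norms.

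\textbf{Choosing $A$ and $B$.} Fix dense subsets $\{g_\alpha\}_{\alpha < \lambda}$ of $G$ and $\{z_\beta\}_{\beta < \mu}$ of $G \cap (E \tensor F)$, where $\lambda = \dens(G) < \kappa$ and $\mu \le \lambda$. I would expand each $g_\alpha$ as a (absolutely convergent, in the projective case) series of elementary tensors and each $z_\beta$ as such a series with components in $E$ and in $F$, and collect all first coordinates into $A_0 \subset X$ and all second coordinates into $B_0 \subset Y$. Since each element contributes only countably many coordinates and $\kappa$ is uncountable, $\dens(\clspan A_0), \dens(\clspan B_0) \le \max(\lambda, \aleph_0) < \kappa$. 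Applying the $\kappa$ ideal property of $E$ (resp.\ $F$) to $A := \clspan A_0$ (resp.\ $B := \clspan B_0$) yields norm-one operators $P_X \colon A \ra E$ and $P_Y \colon B \ra F$ fixing $A \cap E$ and $B \cap F$. By construction the $E$- and $F$-coordinates of the $z_\beta$ lie in $A \cap E$ and $B \cap F$, hence are fixed by $P_X$ and $P_Y$.

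\textbf{Assembling $P$.} Set $\Phi = P_X \otimes P_Y \colon A \tensor B \ra E \tensor F$. For the injective product everything embeds isometrically (``the injective tensor product respects subspaces''), so $G \subseteq A \injtensor B \subseteq X \injtensor Y$ and $E \injtensor F \subseteq X \injtensor Y$ are genuine subspaces; define $P := \Phi\restricted{G}$. Then $\norm{P} \le 1$, and $\Phi$ fixes $\overline{(A \cap E) \otimes (B \cap F)}$ pointwise, which contains every $z_\beta$; by continuity $P$ fixes all of $G \cap (E \injtensor F)$. Finally $\norm{P} = 1$ whenever this intersection is nontrivial (an operator fixing a nonzero vector has norm $\ge 1$), and when it is trivial I would simply replace $P$ by any norm-one operator into $E \injtensor F$, exactly as in the proof of Theorem~\ref{charinj1}.

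\textbf{The projective obstacle.} The one genuine difficulty is that the projective tensor product does \emph{not} respect subspaces, so a priori neither $A \projtensor B$ nor $E \projtensor F$ embeds isometrically into $X \projtensor Y$ and the identifications above break down; this is the step I expect to be the main obstacle. I would repair it using the ideal theory already recalled. Since $\kappa$ is uncountable, the $\kappa$ ideals $E, F$ are in particular isometric ideals in the classical sense (test $\dim < \aleph_0 < \kappa$), so by \cite[Theorem 1]{rao01} applied in each variable and composed, $E \projtensor F$ is an ideal in $X \projtensor Y$, hence sits there isometrically (this is also needed for the statement to parse). Likewise, before extracting $P_X, P_Y$ I would first enlarge $A, B$ to ideals $A' \supseteq A$ in $X$ and $B' \supseteq B$ in $Y$ of the same density via the Sims--Yost theorem (Theorem~\ref{theo:simyost}); then, again by \cite[Theorem 1]{rao01}, $A' \projtensor B' \hookrightarrow X \projtensor Y$ is isometric, so $G \subseteq A' \projtensor B'$ legitimately and the series representing each $z_\beta$ converges to it already in $A' \projtensor B'$. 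Running the previous paragraph with $A', B'$ in place of $A, B$ then produces $P$, and the fixing computation $\Phi(z_\beta) = \sum_n P_X(e_n) \otimes P_Y(f_n) = \sum_n e_n \otimes f_n = z_\beta$ goes through verbatim. Everything else — the algebraic core and the injective case — is routine.
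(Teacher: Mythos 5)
Your proposal is correct and takes essentially the same route as the paper's proof: both collect the (countably many per element) tensor components of dense subsets of $G$ and of $G \cap (E \tensor F)$, enlarge their closed spans to ideals of density $<\kappa$ via the Sims--Yost theorem so that the projective tensor product of these ideals embeds isometrically into $X \projtensor Y$, apply the $\kappa$ ideal property of $E$ and $F$ to those ideals, and restrict the tensored norm-one operator to $G$, with the fixing property obtained by continuity on the dense set of series/finite-sum representations. The only differences are cosmetic: you spell out the norm-one normalization when $G \cap (E\tensor F)$ is trivial and the Rao-theorem justification that $E \projtensor F$ sits isometrically inside $X \projtensor Y$, both of which the paper uses but treats more briefly.
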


Compare the above result with the fact that if $Y$ is an ideal in $Z$ and $X$ is any Banach space then $Y\projtensor X$ (resp. $Y\injtensor X$) is an ideal in $Z\projtensor X$ (resp. $Z\injtensor X$) (c.f. e.g. \cite[Theorem 1 and Lemma 2]{rao01}).

\begin{proof}
We will give the proof for the projective tensor product as the proof for the injective case is analogous and even simpler because this tensor product respects subspaces.

As $E$ and $F$ are $\kappa$ ideals in $X$ and $Y$ respectively, in particular they are ideals, so $E \projtensor F$ is indeed a subspace of $X \projtensor Y$ \cite[Theorem 1]{rao01}. To see that it is a $\kappa$ ideal in $X \projtensor Y$ we must prove that given any subspace $W$ of $X \projtensor Y$ with $\dens (W) < \kappa$ there exists a norm $1$ operator $P: W \ra E \projtensor F$ which fixes points of $W \cap E \projtensor F$.

Since $\dens(W \cap E \projtensor F) < \kappa$  there exists a dense set $\{u_\alpha\}_{\alpha \in A} \subset W \cap E \projtensor F$ with $|A| < \kappa$, so for any $\alpha \in A$ we can obtain a sequence
$$\left\{ \sum_{i=1}^{n_{k}^\alpha} e_{ik}^\alpha \otimes f_{ik}^\alpha  \right\}_{k=1}^\infty \subset E \otimes F$$
that converges to $u_\alpha$. Furthermore, as $\dens(W) < \kappa$ we can also get a dense set $\{w_\beta\}_{\beta \in B} \subset W$ with $|B| < \kappa$ such that for any $\beta \in B$ there is a convergent sequence
$$\left\{ \sum_{i=1}^{m_{k}^\beta} x_{ik}^\beta \otimes y_{ik}^\beta  \right\}_{k=1}^\infty \subset X \otimes Y$$
whose limite is $w_\beta$. 

By an application of Theorem~\ref{theo:simyost} find ideals $U \subset X$ and $V \subset Y$ of density $<\kappa$ and such that 
\begin{align*}
 & U \supseteq \clspan \left\{e_{ik}^\alpha, x_{jk}^\beta: \alpha \in A \wedge \beta \in B \land k \in \N \land 1 \le i \le n_k^\alpha \land 1 \le j \le m_k^\beta\right\}, \\
 & V \supseteq \clspan \left\{f_{ik}^\alpha, y_{ik}^\beta: \alpha \in A \land \beta \in B \land k \in \N \land 1 \le i \le n_k^\alpha \land 1 \le j \le m_k^\beta\right\}.
\end{align*}
Observe that a density argument yields us that $W \subset U \projtensor V$. Moreover, since $U$ and $V$ are ideals in $X$ and $Y$ respectively we have that $U \projtensor V$ is a subspace of $X \projtensor Y$.

Now we use again that $E, F$ are  $\kappa$ ideals in $X$ and $Y$ respectively, together with the fact that $\dens(U), \dens(V) < \kappa$, to obtain two norm $1$ maps $\phi: U \ra E$ and $\psi: V \ra F$ that fix points of $U \cap E$ and $V \cap F$ respectively. That allows us to define $Q = \phi \otimes \psi: U \projtensor V \ra E \projtensor F$ and finally $P= Q|_W$ will be the desired projection.

Indeed, we have that
$$\norm{P} \le \norm{Q} = \norm{\phi} \norm{\psi} = 1$$
so, to conlcude, it is enough to see that $P(w) = w$ for every $w \in W \cap E \projtensor F$.

Given $w \in W \cap E \projtensor F$ and $\eps > 0$, we know that there exists some $\alpha \in A$ and $n_k^\alpha \in \N$ such that
\begin{align}\label{tensor5}
\norm{w - \sum_{i=1}^{n_{k}^\alpha} e_{ik}^\alpha \otimes f_{ik}^\alpha} < \varepsilon/2.
\end{align}
Furthermore, as every $e_{ik}^\alpha \in U \cap E$ and $f_{ik}^\alpha \in V \cap F$ we have that
\begin{align}\label{tensor6}
Q \left( \sum_{i=1}^{n_{k}^\alpha} e_{ik}^\alpha \otimes f_{ik}^\alpha \right) = \sum_{i=1}^{n_{k}^\alpha} Q(e_{ik}^\alpha \otimes f_{ik}^\alpha) = \sum_{i=1}^{n_{k}^\alpha} \phi(e_{ik}^\alpha) \otimes \psi(f_{ik}^\alpha) = \sum_{i=1}^{n_{k}^\alpha} e_{ik}^\alpha \otimes f_{ik}^\alpha.
\end{align}
By (\ref{tensor5}) and (\ref{tensor6}) we conclude that
\begin{align*}
\norm{P(w) - w}  &= \norm{Q(w) - w} \le \norm{Q(w) -Q\left( \sum_{i=1}^{n_{k}^\alpha} e_{ik}^\alpha \otimes f_{ik}^\alpha \right)} + \\
& +\norm{\sum_{i=1}^{n_{k}^\alpha} e_{ik}^\alpha \otimes f_{ik}^\alpha - w} \le \norm{Q} \norm{w - \sum_{i=1}^{n_{k}^\alpha} e_{ik}^\alpha \otimes f_{ik}^\alpha} + \\
 &  + \norm{w - \sum_{i=1}^{n_{k}^\alpha} e_{ik}^\alpha \otimes f_{ik}^\alpha} \le 2 \norm{w - \sum_{i=1}^{n_{k}^\alpha} e_{ik}^\alpha \otimes f_{ik}^\alpha} < \varepsilon.
\end{align*}
Since $\varepsilon$ was arbitrary we conclude that $P(w) = w$, as wanted.
\end{proof}

\begin{example}\label{exam:contratensor}
Let $X$ be a non-Asplund and $\kappa$ injective Banach space (e.g. $\ell_\infty(\kappa,\Gamma)$ for $\kappa<\vert\Gamma\vert$) and let $\ell_\infty(\Gamma)$ a space such that $X$ embeds isometrically in $\ell_\infty(\Gamma)$. Then $X\projtensor X$ is a $\kappa$ ideal in $\ell_\infty(\Gamma)\projtensor \ell_\infty(\Gamma)$ by a combination of Theorems~\ref{charinj1} and \ref{tensor}. Moreover, we claim that $X\projtensor X$ is not an $L_1$-predual space. 

Since $X$ is not Asplund but it is an $L_1$ predual, we have that $X^*=L_1(\mu)$ for some measure $\mu$ not purely atomic. Consequently, $X^*$ contains an isomorphic copy of $\ell_2$ and $(X\projtensor X)^*=L(X,X^*)\supset X^*\injtensor X^*$ contains $\ell_2\injtensor \ell_2$ isomorphically, which in turn contains an isomorphic copy of $c_0$ (this follows since $\ell_2\injtensor \ell_2$ is an $M$-ideal in its bidual \cite[Theorem VI.4.1]{hww} which is non-reflexive \cite[Theorem 4.21]{ryan}, so the result follows by \cite[Theorem II.4.7]{hww}). To conclude that $X\pten X$ is not an $L_1$ predual it remains to observe that no $L_1$ space may contain an isomorphic copy of $c_0$ (c.f. e.g. \cite[Theorem 1.c.4]{lita}).
\end{example}

In order to look for more examples of $\kappa$ ideals, we will focus our attention to \textit{Lipschitz-free spaces}. We refer the reader to Subsection~\ref{subsc:lipschitzfree} for the necessary notation.

Let us recall that as a consequence of results from \cite{fakh72} it follows that given a Banach space $X$ and a subspace $Y\subseteq X$ then $\mathcal F(Y)$ is an ideal in $\mathcal F(X)$ if, and only if, $Y$ is an ideal in $X$ (see the paragraph after Definition 1.1. in \cite{agmrz25} for a detailed explanation). In the transfinite case we can also prove one implication, which will be enough to expand our non $L_1$ predual examples, but we don't know if the reciprocal of the next theorem is true (see Question~\ref{question:lipfree}).

\begin{proposition}\label{theo:idealfree}
Let $N \subset M$ be Banach spaces such that $N$ is a $\kappa$ ideal in $M$ for some infinite uncountable cardinal $\kappa$. Then $\mathcal{F}(N)$ is a $\kappa$ ideal in $\mathcal{F}(M)$ through the canonical inclusion.
\end{proposition}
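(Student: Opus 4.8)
The plan is to exploit the functoriality of the Lipschitz-free construction together with the observation that every element of $\mathcal{F}(M)$ is carried by its molecular expansion on a \emph{countable} set of points of $M$. Consequently, a subspace $W\subseteq\mathcal F(M)$ with $\dens(W)<\kappa$ only involves fewer than $\kappa$ points of $M$, and since $N$ is a $\kappa$ ideal in $M$ I can build a single operator on a small subspace $E\subseteq M$ and push it up to the free spaces. Throughout I identify $\mathcal F(N)$ with the closed subspace $\clspan\{\delta(n):n\in N\}$ of $\mathcal F(M)$, which is isometric to $\mathcal F(N)$ by McShane's extension theorem, and similarly $\mathcal F(A)=\clspan\delta(A)$ for any subset $A\ni 0$; these identifications are recalled in Subsection~\ref{subsc:lipschitzfree}.

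Given a subspace $W\subseteq\mathcal F(M)$ with $\dens(W)<\kappa$, I would first note that, $\kappa$ being uncountable, both $W$ and $W\cap\mathcal F(N)$ have density $<\kappa$. I then choose dense families $\{w_\beta\}_{\beta\in B}\subseteq W$ and $\{u_\alpha\}_{\alpha\in A}\subseteq W\cap\mathcal F(N)$ with $\lvert A\rvert,\lvert B\rvert<\kappa$, and fix molecular expansions $w_\beta=\sum_n\lambda_n^\beta m_{x_n^\beta,y_n^\beta}$ with $x_n^\beta,y_n^\beta\in M$, and $u_\alpha=\sum_n\mu_n^\alpha m_{p_n^\alpha,q_n^\alpha}$ with $p_n^\alpha,q_n^\alpha\in N$. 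Collecting all these points I set
$$E:=\clspan\left(\{0\}\cup\{x_n^\beta,y_n^\beta:\beta\in B,\,n\in\N\}\cup\{p_n^\alpha,q_n^\alpha:\alpha\in A,\,n\in\N\}\right)\subseteq M.$$
Because $\kappa$ is uncountable and $\lvert A\rvert,\lvert B\rvert<\kappa$, the spanning set has cardinality $<\kappa$, so $\dens(E)<\kappa$. Each point $p_n^\alpha,q_n^\alpha$ lies in $E\cap N$ (it belongs to $N$ and, by construction, to $E$), so every $u_\alpha\in\mathcal F(E\cap N)$, while every $w_\beta\in\mathcal F(E)$; by density and closedness this yields $W\subseteq\mathcal F(E)$ together with $\{u_\alpha\}\subseteq\mathcal F(E\cap N)$.

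Now I invoke the $\kappa$ ideal hypothesis for the subspace $E\subseteq M$: there is a linear $T\colon E\to N$ with $\norm{T}=1$ fixing $E\cap N$ pointwise. Since $T$ is linear with $T(0)=0$ and $\norm{T}=1$, its Lipschitz constant is $\norm{T}=1$, so it induces $\widehat T\colon\mathcal F(E)\to\mathcal F(N)$ with $\norm{\widehat T}=1$ and $\widehat T(\delta(e))=\delta(T(e))$. I define $P:=\widehat T\restricted{W}\colon W\to\mathcal F(N)$, which is legitimate as $W\subseteq\mathcal F(E)$, and clearly $\norm{P}\le 1$. To see that $P$ fixes $W\cap\mathcal F(N)$, observe that for $e,e'\in E\cap N$ one has $\widehat T(m_{e,e'})=(\delta(Te)-\delta(Te'))/d(e,e')=m_{e,e'}$ because $Te=e$ and $Te'=e'$; by linearity and continuity $\widehat T$ fixes all of $\mathcal F(E\cap N)$, in particular each $u_\alpha$. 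For arbitrary $w\in W\cap\mathcal F(N)$ and $\eps>0$, choosing $u_\alpha$ with $\norm{w-u_\alpha}<\eps$ gives $\norm{P(w)-w}\le\norm{\widehat T(w-u_\alpha)}+\norm{u_\alpha-w}\le 2\eps$, whence $P(w)=w$. Finally $\norm{P}=1$: when $W\cap\mathcal F(N)\neq\{0\}$ this is forced by $P$ fixing a nonzero vector, and when $W\cap\mathcal F(N)=\{0\}$ I simply replace $P$ by any norm-one operator, exactly as in the proof of Theorem~\ref{charinj1}.

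The step I expect to be the main obstacle is the verification that $P$ fixes $W\cap\mathcal F(N)$. The tempting shortcut would be to assert $\mathcal F(E)\cap\mathcal F(N)=\mathcal F(E\cap N)$, but this equality of intersections of free spaces is delicate and I prefer not to rely on it. My workaround is the reason for introducing the family $\{u_\alpha\}$: by feeding the support points of a norm-dense subset of $W\cap\mathcal F(N)$ into $E$ from the start, each $u_\alpha$ genuinely lies in $\mathcal F(E\cap N)$ and is therefore fixed by $\widehat T$, after which a routine density estimate transfers this to all of $W\cap\mathcal F(N)$. The remaining ingredients — functoriality of $\mathcal F$, the isometric identification $\mathcal F(A)=\clspan\delta(A)$ via McShane extension, and the equality $\norm{\widehat T}=\mathrm{Lip}(T)$ — are standard.
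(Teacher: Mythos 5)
Your proposal is correct and follows essentially the same route as the paper's proof: collect the support points of molecular expansions of dense families in $W$ and in $W\cap\mathcal F(N)$ (taking the latter with points in $N$), span a subspace $E\subseteq M$ of density $<\kappa$, apply the $\kappa$ ideal property of $N$ there, and push the resulting operator through the free-space functor, finishing with the same density estimate on $W\cap\mathcal F(N)$. Your explicit handling of the norm-one normalisation and your avoidance of the identity $\mathcal F(E)\cap\mathcal F(N)=\mathcal F(E\cap N)$ are exactly the (implicit) choices made in the paper as well.
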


\begin{proof}
Let $E \subset \mathcal{F}(M)$ be a subspace with $\dens(E) < \kappa$, take a dense set $\{m_\alpha\}_{\alpha \in A} \subset E \cap \mathcal{F}(N)$ and extend it to a dense set $\{m_\alpha\}_{\alpha \in A} \cup \{e_\beta\}_{\beta \in B}$ of $E$ such that $|A \cup B| < \kappa$. Then, for every $\alpha \in A$ and $\beta \in B$ we can express
\begin{align*}
    m_\alpha & = \sum_{k=1}^\infty \lambda_\alpha^k m_{x_\alpha^k y_\alpha^k}, \\
    e_\beta & = \sum_{k=1}^\infty \mu_\beta^k m_{z_\beta^k w_\beta^k},
\end{align*}
for certain $\lambda_\alpha^k, \mu_\beta^k \in \R$, $x_\alpha^k, y_\alpha^k \in N$, $z_\beta^k, w_\beta^k \in M$, where $m_{x_\alpha^k y_\alpha^k}$ and $m_{z_\beta^k w_\beta^k}$ are molecules on $\mathcal{F}(M)$.

By defining 
$$K= \clspan\{x_\alpha^k, y_\alpha^k, z_\alpha^k, w_\alpha^k: \alpha \in A, \beta \in B, k \in \N\},$$
we have a subspace of $M$ with $\dens(K) < \kappa$, so by the assumption we obtain a norm $1$ operator $Q: K \ra N$ that acts as the identity on $K \cap N$. By the linerarisation property that defines the Lipschitz-free space, we can obtain a bounded operator $T_Q: \mathcal{F}(K) \ra \mathcal{F}(N)$ such that $\norm{T_Q} = \norm{Q} \le 1$ and $T_Q \circ \delta_K = \delta_N \circ Q$. Furthermore, as $E \subset \clspan \delta_M(K)$, if we consider $\iota: E \ra \mathcal{F}(K)$ the canonical inclusion we can define $P: E \ra \mathcal{F}(N)$ as $P = T_Q \circ \iota$ and it is enough to see that $P(e) = e$ for every $e \in E \cap \mathcal{F}(N)$.

To prove it, first observe that as $Q(x_\alpha^k) = x_\alpha^k$ we have that
$$T_Q(\delta_K(x_\alpha^k)) = \delta_N(Q(x_\alpha^k)) = \delta_N(x_\alpha^k).$$
Observe that we get an analogous equality for $y_\alpha^k$. Consequently
\begin{align}\label{Pmolec}
P(m_\alpha) = & T_Q\left(\sum_{k=1}^\infty \lambda_\alpha^k \iota\left(m_{x_n^ky_n^k}\right) \right) = T_Q \left(\sum_{k=1}^\infty \lambda_n^k \dfrac{\delta_K(x_\alpha^k) - \delta_K(y_\alpha^k)}{\norm{x_\alpha^k -y_\alpha^k}} \right) \nonumber\\
 = & \sum_{k=1}^\infty \lambda_\alpha^k \dfrac{\delta_N(x_\alpha^k) - \delta_N(y_\alpha^k)}{\norm{x_\alpha^k - y_\alpha^k}} = m_\alpha.
\end{align}
Then, given $e \in E \cap \mathcal{F}(N)$ and $\varepsilon >0$ we can take $m_\alpha$ such that
$$\norm{e - m_\alpha} < \varepsilon/2$$
and by (\ref{Pmolec})
\begin{align*}
\norm{P(e) - e} & \le \norm{P(e) - P(m_\alpha)} + \norm{P(m_\alpha) - e} \le (\norm{P} + 1) \norm{e - m_\alpha} \\
& \le 2 \norm{e - m_\alpha} < \varepsilon.   
\end{align*}
As the last inequality is true for any $\varepsilon > 0$, we can conclude that $P(e) = e$ as wanted.
\end{proof}

Observe once again that the above theorem produces examples of Banach spaces such that $\mathcal F(Y)$ is a $\kappa$ ideal in $\mathcal F(X)$ and which do not belong to the class of Banach spaces which are $L_1$ preduals. Indeed, any infinite dimensional Lipschitz free space is never isomorphic to an $L_1$ predual space \cite[Theorem 1.1 (iii)]{cdw}.

\section{Transfinite (almost) isometric ideals}\label{sect:transaiideal}

We start with the formal definition of $\kappa$ (almost) isometric ideal.

\begin{definition}\label{defi:transaiideal}
Let $X$ be a Banach space, $Y$ a subspace of $X$ and $\kappa$ an infinite cardinal. We say that $Y$ is a \textbf{$\kappa$} (almost) isometric ideal in $X$ if given any subspace $E$ of $X$ with $\dens(E) <\kappa$ (and $\eps >0$), there exists an ($\eps$-)isometry $T: E \ra Y$ that preserves the points of $E \cap Y$.
\end{definition}

It was proven in \cite[Theorem 4.3]{aln2} that a Banach space $X$ is a Gurari\u{\i} space if, and only if, $X$ is an almost isometric ideal in every Banach space containing it. Observing then that $\aleph_0$ (almost) isometric ideals are the classical (almost) isometric ideals (see Definition \ref{defideal} (2)), the following theorem extends this result to any space of (almost) universal disposition for spaces of density $< \kappa$:

\begin{theorem}\label{AUDchar}
Let $X$ be a Banach space and $\kappa$ an infinite cardinal. The following are equivalent:
\begin{enumerate}
    \item $X$ is a $\kappa$ (almost) isometric ideal in every Banach space that contains it.
    \item $X$ is a $\text{\emph{(A)UD}}_{<\kappa}$ space, that is, a space of (almost) universal disposition for Banach spaces of density $<\kappa$.
    \end{enumerate}
\end{theorem}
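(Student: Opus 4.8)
The plan is to follow the template of Theorem~\ref{charinj1}, but to replace the injective superspace $\linf{\Gamma}$ by a pushout (amalgamated sum) that turns the given isometry into an inclusion, and to replace norm-preserving extensions by the ($\eps$-)isometric extensions that the $\kappa$ (almost) isometric ideal property provides. I would run the isometric case (UD, no $\eps$) and the almost isometric case (AUD, with $\eps$) simultaneously, carrying the parenthetical ``$\eps$'' throughout.

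The implication (2)$\Rightarrow$(1) is the easy half. Assuming $X$ is $\text{(A)UD}_{<\kappa}$, let $Z$ be any Banach space containing $X$ isometrically, fix a subspace $E\subseteq Z$ with $\dens(E)<\kappa$ (and $\eps>0$), and put $Y:=E\cap X$. Then $\dens(Y)\le\dens(E)<\kappa$ and the inclusion $\iota:Y\longrightarrow X$ is an isometry, so the defining property of $\text{(A)UD}_{<\kappa}$ applied to $Y\subseteq E$ yields an ($\eps$-)isometry $T:E\longrightarrow X$ extending $\iota$. Since $T$ extends $\iota$, it fixes every point of $E\cap X$, which is exactly what is required for $X$ to be a $\kappa$ (almost) isometric ideal in $Z$.

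For the converse (1)$\Rightarrow$(2), which is the substantial direction, I start from normed spaces $Y\subseteq Z$ with $\dens(Z)<\kappa$ and an isometry $t:Y\longrightarrow X$; passing to completions I may assume $Y$ and $Z$ are Banach. The idea is to build an ambient space in which $t$ becomes an inclusion. Concretely I would form the pushout $W:=(X\oplus_1 Z)/N$ of the diagram $X\xleftarrow{t}Y\hookrightarrow Z$, where $N:=\{(t(y),-y):y\in Y\}$, with its canonical maps $i_X:X\longrightarrow W$ and $i_Z:Z\longrightarrow W$. A triangle-inequality computation with the quotient norm shows that $i_X$ and $i_Z$ are isometric embeddings (this is exactly where one uses that both $t$ and $Y\hookrightarrow Z$ are isometric), and that their ranges satisfy $i_X(X)\cap i_Z(Z)=i_X(t(Y))=i_Z(Y)$, the common copy of $Y$. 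Identifying $X$ with $i_X(X)$, hypothesis (1) says $X$ is a $\kappa$ (almost) isometric ideal in $W$. I would then apply this to the subspace $E:=i_Z(Z)$, which has $\dens(E)=\dens(Z)<\kappa$, to obtain an ($\eps$-)isometry $S:E\longrightarrow i_X(X)$ fixing $E\cap i_X(X)=i_Z(Y)$. Setting $T:=i_X^{-1}\circ S\circ i_Z:Z\longrightarrow X$ gives the sought extension: it is an ($\eps$-)isometry, being a composition of ($\eps$-)isometries, and for $y\in Y$ one has $i_Z(y)\in i_Z(Y)$, hence $S$ fixes it and $T(y)=i_X^{-1}(i_Z(y))=i_X^{-1}(i_X(t(y)))=t(y)$, using the identity $i_Z(y)=i_X(t(y))$ valid in $W$.

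The step I expect to be the main obstacle is the verification of the pushout facts: that both legs $i_X,i_Z$ are genuinely isometric and that the intersection of their ranges reduces to the amalgamated copy of $Y$. These are the purely ``gluing'' ingredients, independent of the cardinal $\kappa$, and are standard in the theory of spaces of universal disposition (cf. the pushout constructions in \cite{accgm15,gk11}); still, they are where the actual norm estimates live. Once they are in place, the transfinite content is only the bookkeeping $\dens(E)=\dens(Z)<\kappa$, and the argument specialises to the $\aleph_0$ Gurari\u{\i} case of \cite[Theorem 4.3]{aln2}.
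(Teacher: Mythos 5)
Your proof is correct and takes essentially the same route as the paper: in (1)$\Rightarrow$(2) the paper likewise passes to completions, forms the pushout of $X \xleftarrow{t} Y \hookrightarrow Z$ (citing a pushout lemma rather than constructing $W=(X\oplus_1 Z)/N$ explicitly), applies hypothesis (1) inside the pushout to the isometric copy of $Z$, and composes the maps, while the (2)$\Rightarrow$(1) directions are identical. The only cosmetic difference is that you assert the exact equality $i_X(X)\cap i_Z(Z)=i_Z(Y)$, which is true (once $Y$ is complete, so $N$ is closed) but not needed: the argument only uses the containment $i_Z(Y)\subseteq i_X(X)\cap i_Z(Z)$ coming from commutativity of the pushout square, which is all the paper invokes.
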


In the proof we will need to consider an important category-theoretic construction in Banach spaces known as the Push-Out construction. For completeness we state here a lemma with this construction:

\begin{lemma}\label{pushoutconst}
Let $A, B, Y$ be Banach spaces and $\alpha: Y \ra A$, $\beta: Y \ra B$ bounded linear operators. Then, there exists a Banach space $PO$ (which is known as \emph{push-out}) and two bounded linear operators $\alpha':B\ra PO$ and $\beta':A\ra PO$ that makes the following diagram commute:
\begin{center}
\begin{tikzcd}
    Y \arrow{d}{\beta} \arrow{r}{\alpha} & A \arrow{d}{\beta'} \\
    B \arrow{r}{\alpha'} & PO
\end{tikzcd}
\end{center}
Furthermore, if $\alpha$ and $\beta$ are isometries, then so are $\alpha', \beta'$.
\end{lemma}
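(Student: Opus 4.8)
The plan is to construct the push-out space $PO$ as a concrete quotient of the direct sum $A \oplus_1 B$ by the range of the difference map induced by $\alpha$ and $\beta$, and then to verify that the canonical inclusions yield the commuting square with the claimed isometry property. First I would set $N := \{(\alpha(y), -\beta(y)) : y \in Y\}$, which is a linear subspace of $A \oplus_1 B$ (it is closed because $\alpha,\beta$ are bounded and $Y$ is complete — the map $y \mapsto (\alpha(y),-\beta(y))$ is bounded below when either $\alpha$ or $\beta$ is an isometry, and in general one takes the closure to be safe). Then I would define $PO := (A \oplus_1 B)/N$ with the quotient norm, and define $\beta' : A \ra PO$ by $\beta'(a) := (a,0) + N$ and $\alpha' : B \ra PO$ by $\alpha'(b) := (0,b) + N$. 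These are clearly bounded linear operators of norm at most $1$.

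Next I would check commutativity of the diagram. For $y \in Y$ we have
$$\beta'(\alpha(y)) - \alpha'(\beta(y)) = (\alpha(y),0) + N - (0,\beta(y)) + N = (\alpha(y), -\beta(y)) + N = N,$$
since $(\alpha(y),-\beta(y)) \in N$ by definition. Hence $\beta' \circ \alpha = \alpha' \circ \beta$, which is exactly the required commutativity. This step is essentially formal once $N$ is chosen as the "antidiagonal'' of the two maps.

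The main obstacle, and the only part requiring genuine work, is the final assertion: if $\alpha$ and $\beta$ are isometries then so are $\alpha'$ and $\beta'$. For $\beta'$ I would need to show $\norm{(a,0)+N} = \norm{a}$ for every $a \in A$. The inequality $\leq$ is immediate from the quotient norm. For $\geq$ I would use a duality/Hahn-Banach argument: given $a^* \in S_{A^*}$ with $a^*(a) = \norm{a}$, I would construct a functional on $A \oplus_1 B$ that annihilates $N$ and restricts suitably. Concretely, the condition that a pair $(a^*, b^*) \in A^* \oplus_\infty B^*$ annihilates $N$ reads $a^*(\alpha(y)) = b^*(\beta(y))$ for all $y$, i.e. $\alpha^*(a^*) = \beta^*(b^*)$ in $Y^*$; since $\beta$ is an isometry, $\beta^*$ is a quotient map onto $Y^*$, so one can solve for $b^*$ with $\norm{b^*} = \norm{\alpha^*(a^*)} \leq \norm{a^*}$, yielding a norm-one functional on $PO$ that detects $\norm{a}$. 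The symmetric argument (using that $\alpha$ is an isometry, hence $\alpha^*$ is a quotient map) handles $\alpha'$. I expect the bookkeeping with the dual norms on $A^* \oplus_\infty B^*$ and the quotient-map property of the adjoints to be the delicate point, but no deep idea beyond Hahn--Banach is needed.
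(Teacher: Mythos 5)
Your proposal is correct, and the space you build is exactly the one the paper has in mind: $PO=(A\oplus_1 B)/\overline{\Delta}$ with $\Delta=\{(\alpha(y),-\beta(y)):y\in Y\}$, together with $\beta'(a)=(a,0)+\overline{\Delta}$ and $\alpha'(b)=(0,b)+\overline{\Delta}$; note that the paper itself only states this construction and defers every verification to \cite[Lemma A.19]{sepibook}. Where you genuinely diverge from the standard treatment is in the isometry step. Your duality argument works: a pair $(a^*,b^*)\in A^*\oplus_\infty B^*$ annihilates $\Delta$ (equivalently $\overline{\Delta}$) precisely when $\alpha^*a^*=\beta^*b^*$, and since $\beta$ is an isometric embedding, $\beta^*$ is a metric surjection onto $Y^*$, so a norming functional $a^*\in S_{A^*}$ for $a$ extends to a norm-one functional on $PO$, giving $\norm{(a,0)+\overline{\Delta}}\geq \norm{a}$; the bound $\norm{b^*}\le\norm{\alpha^*a^*}\le 1$ uses that $\alpha$ is contractive, and the symmetric argument settles $\alpha'$. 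However, the proof in the cited reference avoids duality altogether and simply computes the quotient norm: if $\alpha$ is an isometry and $\norm{\beta}\le 1$, then for every $y\in Y$,
\begin{equation*}
\norm{(0,b)+(\alpha(y),-\beta(y))}_1=\norm{\alpha(y)}+\norm{b-\beta(y)}=\norm{y}+\norm{b-\beta(y)}\geq \norm{\beta(y)}+\norm{b-\beta(y)}\geq\norm{b},
\end{equation*}
and taking the infimum over $y$ (which is unchanged on passing to $\overline{\Delta}$) gives that $\alpha'$ is isometric; swapping roles handles $\beta'$. Both routes use the same hypotheses (each conclusion needs one of the maps isometric and the other contractive, which the ``furthermore'' hypothesis supplies twice over), but the direct estimate is more elementary, while your version makes explicit the dual description of $PO^*$ as the annihilator of $\Delta$, which is occasionally useful in its own right. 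Your remark on closedness of $\Delta$ is also accurate: it is automatic when one of $\alpha,\beta$ is bounded below (in particular an isometry), and in the general bounded case one must pass to the closure, which affects neither the commutativity nor the duality argument.
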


In essence, the push-out can be constructed as the quotient of the sum $A \oplus_1B$ with the closure of the subspace $\Delta = \{(\alpha(y), - \beta(y)): y \in Y\}$. The details can be found in \cite[Lemma A.19]{sepibook}.

\begin{proof}
Let us begin by proving that (1) implies (2).
    
Given $A \subset B$ normed spaces with $\dens(B) < \kappa$ and $t: A \ra X$ an isometry, we must see that there exists some extension $T: B \ra X$  which is an ($\eps$-)isometry.

Let us consider $\overline{B}$ the completion of $B$ and $\overline{A}$ the closure of $A$ on $\overline{B}$. Then $\overline{A} \subset \overline{B}$ are Banach spaces and we can denote as $\iota: \overline{A} \ra \overline{B}$ the inclusion. Furthermore, we can extend $t$ to a unique linear isometry $\overline{t}: \overline{A} \ra X$. By Lemma \ref{pushoutconst} we get that $\overline{t}', \iota'$ in the following diagram
\begin{center}
\begin{tikzcd}
    \overline{A} \arrow{d}{\iota} \arrow{r}{\overline{t}} & X \arrow{d}{\iota'} \\
    \overline{B} \arrow{r}{\overline{t}'} & PO
\end{tikzcd}
\end{center}
are also isometries, so $X \equiv \iota'(X)$ and by (1) (being a $\kappa$ (almost) isometric ideal in every Banach space is a property preserved by isometries) we obtain that $\iota'(X)$ is a $\kappa$ (almost) isometric ideal in $PO$. Furthermore, as we also have that $B \equiv \overline{t}'(B)$, so $\dens(\overline{t}'(B)) < \kappa$, we can obtain an ($\eps$-)isometry $T': \overline{t}'(B) \ra \iota'(X)$ such that
\begin{align}\label{T'prese}
    T'(x) = x, \, \forall x \in \overline{t}'(B) \cap \iota'(X).
\end{align}
It is then enough to define $T = (\iota')^{-1} \circ T' \circ \overline{t}' \circ j$ (with $j: B \ra \overline{B}$ the inclusion), which is clearly an ($\eps$-)isometry, and observe that given $a \in A$ we have that
$$\overline{t}'(j(a)) = \overline{t}'(a) = \overline{t}'(\iota(a)) = \iota'(\overline{t}(a)) \in \overline{t}'(B) \cap \iota'(X),$$
so by (\ref{T'prese})
\begin{align*}
T(a) & =  (\iota')^{-1} \left(T'\left(\overline{t}'(j (a)\right)\right) = (\iota')^{-1} \left(T'\left(\iota'(\overline{t}(a))\right)\right) = (\iota')^{-1} \left(\iota'\left(\overline{t}(a)\right)\right) \\
 & = \overline{t}(a) = t (a)
\end{align*}
as desired.

The proof of (2) $\Rightarrow$ (1) is simpler.

Given $Z$ a Banach space that contains $X$, $E \subset Z$ with $\dens(E) < \kappa$ (and $\eps >0$), we can consider $\iota:E \cap X \ra X$ the inclusion and by (2) we obtain $T: E \ra X$ an extension of $\iota$ that is an ($\eps$-)isometry. Then, given any $e \in E \cap X$ we have that
$$T(e) =  \iota (e) = e,$$
so $T$ is the operator which proves that $X$ is a $\kappa$ (almost) isometric ideal in the space $Z$.
\end{proof}

As well as happen with the interrelation between of being an $L_1$ predual and the notion of ideal, the interrelation between Gurari\u{\i} spaces and the notion of almost isometric ideal goes beyond to the results of \cite{aln2}. Indeed, in \cite[Proposition 1]{rao16} it is proved that  if $X$ is a Gurari\u{\i} space and $Y$ is a closed subspace, then $Y$ is a Gurari\u{\i} space if, and only if, $Y$ is an almost isometric ideal in $X$. Next, we obtain a generalization of this result for spaces of $\kappa$ (almost) universal disposition and the notion of $\kappa$ (almost) isometric ideals:

\begin{theorem}\label{AUDsubsp}
Let $X$ be a $\text{\emph{(A)UD}}_{<\kappa}$ space and $Y$ a closed subspace of $X$. Then, $Y$ is a $\text{\emph{(A)UD}}_{<\kappa}$ space if, and only if, $Y$ is a $\kappa$ (almost) isometric ideal in $X$.
\end{theorem}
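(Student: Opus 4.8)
The plan is to mirror the proof of Theorem~\ref{injsubsp}, replacing $\kappa$ injectivity by the property $\text{(A)UD}_{<\kappa}$ and the notion of $\kappa$ ideal by that of $\kappa$ (almost) isometric ideal. As in that result, one implication is essentially free: if $Y$ is a $\text{(A)UD}_{<\kappa}$ space then, by Theorem~\ref{AUDchar}, $Y$ is a $\kappa$ (almost) isometric ideal in \emph{every} Banach space containing it, and in particular in $X$. So all the content lies in the converse.

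For the converse I would assume $Y$ is a $\kappa$ (almost) isometric ideal in $X$ and unwind the definition of $\text{(A)UD}_{<\kappa}$: let $A \subseteq B$ be normed spaces with $\dens(B) < \kappa$ and let $t: A \ra Y$ be an isometry (and fix $\eps > 0$ in the almost case). The idea is to push the problem up into $X$ and then project back into $Y$. Writing $\iota: Y \ra X$ for the inclusion, the map $\iota \circ t: A \ra X$ is an isometry, so since $X$ is a $\text{(A)UD}_{<\kappa}$ space there is an extension $T': B \ra X$ which is an ($\eps'$-)isometry (with $\eps' = 0$ in the exact case and $\eps'$ to be fixed in the almost case). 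The subspace $E := \overline{T'(B)}$ satisfies $\dens(E) \le \dens(B) < \kappa$, so, using that $Y$ is a $\kappa$ (almost) isometric ideal in $X$, I obtain an ($\eps''$-)isometry $P: E \ra Y$ fixing every point of $E \cap Y$. I would then set $T := P \circ T'$, viewing $T'$ as a map $B \ra E$.

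Two things then need to be checked. First, that $T$ extends $t$: for $a \in A$ one has $T'(a) = \iota(t(a)) = t(a) \in Y$, hence $T'(a) \in E \cap Y$, and the fixing property of $P$ gives $T(a) = P(T'(a)) = T'(a) = t(a)$. Second, the norm estimate: in the exact case $T'$ and $P$ are isometries, so $T$ is an isometry; in the almost case the composite satisfies $(1-\eps')(1-\eps'')\norm{b} \le \norm{T(b)} \le (1+\eps')(1+\eps'')\norm{b}$ for all $b \in B$, so choosing $\eps', \eps'' > 0$ small enough that $(1+\eps')(1+\eps'') \le 1 + \eps$ and $(1-\eps')(1-\eps'') \ge 1 - \eps$ makes $T$ an $\eps$-isometry.

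Since the heavy lifting is carried by Theorem~\ref{AUDchar} (for the easy direction) and by the defining properties of $\text{(A)UD}_{<\kappa}$ and of $\kappa$ (almost) isometric ideals, which plug together almost verbatim as in Theorem~\ref{injsubsp}, I do not anticipate a serious obstacle. The only points requiring care are bookkeeping ones: the crucial observation that the lifted extension $T'$ automatically maps $A$ back into $Y$ (this is exactly what makes the fixing property of $P$ applicable and is the crux of the argument), and, in the almost case, splitting the target accuracy $\eps$ into the two independent errors $\eps'$ and $\eps''$ arising from the two applications of the defining property.
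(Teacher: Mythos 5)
Your proposal is correct and follows essentially the same route as the paper's own proof: lift $t$ to $X$ via the $\text{(A)UD}_{<\kappa}$ property, restrict attention to the image of $B$ (which has density $<\kappa$), apply the $\kappa$ (almost) isometric ideal property of $Y$ to get a map fixing $E\cap Y$, and compose, with the two error parameters chosen so the product stays within $\eps$ (the paper uses a single $\delta<\min\{\sqrt{1+\eps}-1,\,1-\sqrt{1-\eps}\}$ for both steps, which is the same bookkeeping). The point you flag as the crux — that the lifted extension sends $A$ back into $Y$, so the fixing property applies — is exactly the implicit step in the paper's argument.
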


\begin{proof}
If $Y$ is a $\text{(A)UD}_{<\kappa}$ space, then it is a $\kappa$ (almost) isometric ideal in $X$ by Theorem~\ref{AUDchar}, so it is enough to prove the reciprocal statement.

Given $A \subset B$ normed spaces with $\dens(B) < \kappa$, an isometry $t: A \ra Y$ (and $\eps > 0$), we can define the isometry $t': A \ra X$ as $t' = \iota \circ t$ (where $\iota: Y \ra X$ stands for the inclusion operator) and since $X$ is a (A)UD$_{< \kappa}$ space we obtain a ($\delta$-)isometry $T': B \ra X$ (with $\delta < \min\{\sqrt{1+\eps}-1,1- \sqrt{1-\eps}\}$) that extends $t'$. Then, by defining $B' = T'(B) \subset X$ we have that $\dens(B') \le \dens(B) < \kappa$, so there exists some ($\delta$)-isometry $T'': B' \ra Y$ which preserves points of $B' \cap Y$. It remains to define $T = T'' \circ T'$, which will be the ($\eps$-)isometry we were looking for.
\end{proof}

Bearing the above results in mind we get the following corollary, which sums up a characterisation of $\text{(A)UD}_{<\kappa}$ Banach spaces.

\begin{corollary}
Let $X$ be a Banach space. The following assertions are equivalent:
\begin{enumerate}
    \item $X$ is a $\text{\emph{(A)UD}}_{<\kappa}$ space.
    \item $X$ is a $\kappa$ (almost) isometric ideal in every Banach space $X$ containing it.
    \item $X$ is a $\kappa$ (almost) isometric ideal in every $\text{\emph{(A)UD}}_{<\kappa}$ Banach space containing it.
\end{enumerate}
\end{corollary}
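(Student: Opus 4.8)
The plan is to mirror the structure of the analogous corollary for $\kappa$ injective spaces, leaning on the two characterisation theorems already established together with the embedding result quoted from \cite{gk11}. The equivalence (1)$\Leftrightarrow$(2) is nothing but Theorem~\ref{AUDchar}, so there is nothing to do there. The implication (2)$\Rightarrow$(3) is immediate, since requiring the $\kappa$ (almost) isometric ideal property only for the $\text{(A)UD}_{<\kappa}$ superspaces is a weakening of requiring it for all superspaces.

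The substantial step is (3)$\Rightarrow$(1). First I would use the consequence of \cite[Theorem 4.2]{gk11} recorded earlier in the text: for the given uncountable cardinal $\kappa$ there is a Banach space $Z$ of universal disposition for density $\kappa$ (i.e.\ a $\text{UD}_{<\kappa}$ space) with $X \subseteq Z$ isometrically. I would then observe that a $\text{UD}_{<\kappa}$ space is in particular an $\text{AUD}_{<\kappa}$ space, so that $Z$ serves as an ambient $\text{(A)UD}_{<\kappa}$ space in both readings of the statement. Applying hypothesis (3) to this concrete $Z$, we conclude that $X$ is a $\kappa$ (almost) isometric ideal in $Z$.

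Finally I would invoke Theorem~\ref{AUDsubsp} with $Z$ playing the role of the ambient $\text{(A)UD}_{<\kappa}$ space and $X$ playing the role of the closed subspace: since $X$ is a $\kappa$ (almost) isometric ideal in the $\text{(A)UD}_{<\kappa}$ space $Z$, that theorem forces $X$ itself to be $\text{(A)UD}_{<\kappa}$, which is exactly (1). This closes the cycle of implications.

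The only point demanding a little care, and hence the main (mild) obstacle, is to confirm that the single superspace $Z$ furnished by \cite{gk11} works simultaneously for the isometric and the almost isometric versions of the statement. This is settled by the remark that universal disposition implies almost universal disposition, so the same $Z$ is admissible in Theorem~\ref{AUDsubsp} under either reading; no genuinely new estimate is needed beyond the two theorems already in hand.
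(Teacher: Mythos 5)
Your proposal is correct and follows essentially the same route as the paper: (1)$\Leftrightarrow$(2) by Theorem~\ref{AUDchar}, (2)$\Rightarrow$(3) trivially, and (3)$\Rightarrow$(1) by embedding $X$ into a space of universal disposition for density $\kappa$ via \cite[Theorem 4.2]{gk11} and then applying Theorem~\ref{AUDsubsp}. Your explicit remark that the $\text{UD}_{<\kappa}$ superspace also serves as an $\text{AUD}_{<\kappa}$ superspace is a useful clarification of a point the paper leaves implicit, but it does not constitute a different argument.
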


\begin{proof}
(1)$\Leftrightarrow$(2) is proved in Theorem \ref{AUDchar}. (2)$\Rightarrow$(3) is immediate, whereas (3)$\Rightarrow$(1) follows by Theorem \ref{AUDsubsp} and the fact that every Banach space is contained in some $\text{(A)UD}_{<\kappa}$ Banach space \cite[Theorem 4.2]{gk11}. 
\end{proof}

With the aid of the characterisations of $\text{{UD}}_{<\kappa}$ spaces we will show examples that justify that the notions of $\kappa$ almost isometric ideal we have introduced are different from each other for different cardinals (compare with Example \ref{ex:distintai}).

\begin{example}\label{remark:ejemplosdisuninosuperior}
By \cite[Theorem 4.4]{gk11}, given any uncountable cardinal $\kappa$ with $\kappa^{<\kappa}=\kappa$ there exists a (unique) Banach space $X$ with $\dens(X)=\kappa$ and of universal disposition for Banach spaces of density $<\kappa$. Then, for any Banach space $Y$ with density greater than $2^\kappa$ such that $X \subset Y$, by Theorem~\ref{AUDchar} we get that $X$ is a $\kappa$ isometric ideal in $Y$, but it cannot be an $\alpha$ almost isometric ideal in $Y$ for any $\kappa^+ < \alpha$, because in that case we would have an isomorphism of some space of density grater than $\kappa$ into $X$. 
\end{example}

Now it is time for more examples of $\kappa$ (almost) isometric ideals.

\begin{proposition}
Let $\kappa$ be an infinite uncountable cardinal and $J \subset I$ sets such that $\kappa \le |J| < |I|$. Then $\ell_p(J)$ is a $\kappa$ isometric ideal in $\ell_p(I)$ for $1 \le p < \infty$. The same is true for $c_0(J)$ and $c_0(I)$.
\end{proposition}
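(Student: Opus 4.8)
The plan is to exploit the fact that any subspace $E \subseteq \ell_p(I)$ with $\dens(E) < \kappa$ lives on a \emph{small} set of coordinates, which can be injectively relabelled into $J$ while leaving the coordinates already in $J$ untouched. First I would fix such an $E$ and choose $D \subseteq E$ with $|D| = \dens(E) < \kappa$ whose linear span is dense in $E$. Since every element of $\ell_p(I)$ (and of $c_0(I)$) has countable support, the set
$$S := \bigcup_{x \in D} \supp(x)$$
satisfies $|S| \le |D| \cdot \aleph_0 < \kappa$, using that $\kappa$ is uncountable. As $\ell_p(S) := \{x \in \ell_p(I) : \supp(x) \subseteq S\}$ is a \emph{closed} subspace of $\ell_p(I)$ containing $D$, it contains $\clspan(D) = E$; hence every element of $E$ is supported on $S$. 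This reduction is the conceptual heart of the argument.

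Next I would construct an injection $\sigma : S \ra J$ that restricts to the identity on $S \cap J$. This is the one step requiring genuine care, and it is exactly where the hypothesis $\kappa \le |J|$ enters: we have $|S \cap J| \le |S| < \kappa \le |J|$, and since $J$ is infinite this yields $|J \setminus (S \cap J)| = |J| \ge \kappa > |S| \ge |S \setminus J|$. Therefore $S \setminus J$ admits an injection into $J \setminus (S \cap J)$, and gluing it with the identity on $S \cap J$ produces an injection $\sigma : S \ra J$ whose range lies in $J$.

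The injection $\sigma$ then induces the coordinate-relabelling operator $T : \ell_p(S) \ra \ell_p(J)$ determined by $T(e_s) = e_{\sigma(s)}$ for $s \in S$. Because $\sigma$ is injective and the $\ell_p$-norm depends only on the (unordered) family of nonzero values, $T$ is a linear isometry onto $\ell_p(\sigma(S)) \subseteq \ell_p(J)$; restricting it to $E$ gives an isometry into $\ell_p(J)$. To verify the fixing property demanded by Definition~\ref{defi:transaiideal}, I would take $x \in E \cap \ell_p(J)$: then $\supp(x) \subseteq S \cap J$ (as $E \subseteq \ell_p(S)$ and $x \in \ell_p(J)$), and since $\sigma$ is the identity on $S \cap J$ we get $T(x) = x$. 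This exhibits the required isometry and proves that $\ell_p(J)$ is a $\kappa$ isometric ideal in $\ell_p(I)$.

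Finally, the same proof applies verbatim to $c_0$: elements of $c_0(I)$ again have countable support, $c_0(S)$ is closed in $c_0(I)$, and injective coordinate relabelling preserves the supremum norm, so $c_0(J)$ is a $\kappa$ isometric ideal in $c_0(I)$. Note that only the inequality $\kappa \le |J|$ is used in the proof; the assumption $|J| < |I|$ serves merely to make the inclusion proper and the example non-degenerate. I expect the cardinal computation guaranteeing the existence of $\sigma$ to be the only delicate point, every other step being a routine verification.
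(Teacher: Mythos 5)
Your proof is correct and follows essentially the same route as the paper: reduce $E$ to a support set of size $<\kappa$, build an injection of that set into $J$ fixing its intersection with $J$, and let the induced coordinate-relabelling isometry do the work. If anything, you make explicit two points the paper leaves implicit, namely the countable-support argument giving $|S|<\kappa$ and the cardinal computation producing $\sigma$.
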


\begin{proof}
We will only provide the proof for the case of $\ell_p(I)$  as the one for $c_0(I)$ is analogous.

Let $E \subset \ell_p(I)$ be a subspace with density $< \kappa$ and take a dense subset $\{e_\alpha\}_{\alpha \in \Lambda}$ of $E$ with $|\Lambda| < \kappa$. Define
$$A = \bigcup_{\alpha \in \Lambda} \sop(e_\alpha)$$
and observe that $\abs{A} < \kappa\le \vert J\vert$ and, furthermore, that given any $e \in E$ we have that $\sop(e) \subset A$. We can then define an injective map $\phi: A \ra J$ that fixes the points of $A \cap J$ and an operator $T = S \circ \psi \circ R: E \ra \ell_p(J)$ where:
\begin{itemize}
    \item $R: E \ra \ell_p(A)$ is the restriction map over $A$, so it will be an isometry because the support of every element of $E$ is on $A$.
    \item $\psi: \ell_p(A) \ra \ell_p(\phi(A))$ is the isometry given by $\psi(x) = x \circ \phi^{-1}$.
    \item $S: \ell_p(\phi(A)) \ra \ell_p(J)$ is the canonical inclusion.
\end{itemize}
It is then obvious that $T$ is a linear isometry and we just need to show that it fixes the points of $E \cap \ell_p(J)$. To see that, first observe  that given $e \in E \cap \ell_p(J)$ we have that
$$\sop(e) \subset A \cap J \subset \phi(A).$$
We can now distinguish three cases:
\begin{itemize}
    \item If $i \not \in \phi(A)$, then $e(i) = 0$ and obviously
    $$T(e)(i) = S(\psi(R(e)))(i) = 0 = e(i).$$
    \item If $i \in \phi(A) \setminus \sop(e)$, then $i = \phi(a)$ for some $a \in A \setminus \sop(e)$ (as $\sop(e) \subset A \cap J$ we get that $\phi(\sop(e)) = \sop(e)$) and so we have that
    \begin{align*}
        T(e)(i) & = S(\psi(R(e)))(i) = \psi(R(e))(i) = \psi(e|_A)(i) = e(\phi^{-1}(i)) \\
        & = e(a) = 0 = e(i).
    \end{align*}
    \item If $i \in \sop(e)$, then $i \in A \cap J \subset \phi(A)$ and $\phi(i) = i$, so $\phi^{-1}(i) = i$ and we get that
    $$T(e)(i) = S(\psi(R(e)))(i) = \psi(R(e))(i) = e(\phi^{-1}(i)) = e(i).$$
\end{itemize}
These three cases show that $T(e) = e$ when $e \in E \cap \ell_p(J)$ as we wanted to prove.
\end{proof}

\begin{remark}
Observe that the condition that $\kappa\le \vert J\vert$ can not be removed in the assumptions of the above result.

Indeed, if we assume $\vert J\vert<\kappa$, we can take $i_0 \in I \setminus J$ and consider the subspace
$$E = \clspan\{e_j: j \in J \cup \{i_0\}\} = \ell_p(J \cup\{i_0\})$$
which has density $< \kappa$. Then, is is clear that there does not exist an isomorphism (into its image) $T: E \ra \ell_p(J)$ that fixes the elements of $E \cap \ell_p(J) = \ell_p(J)$.
\end{remark}

Next we will show that the injective tensor product preserves $\kappa$ (almost) isometric ideals, which will give us another family of examples. In the proof it is essential that this tensor product respects isometries, which is the main reason behind the exclusion of the case of the projective tensor product (compare the following result with the establishment of Theorem~\ref{tensor}). Indeed, we will show in Example~\ref{exam:contraproyect} that such result is false for the projective tensor product.

\begin{theorem}\label{theo:injetensortransai}
Let $X$ and $Y$ be Banach spaces, $\kappa$ an infinite cardinal and $E \subset X$, $F \subset Y$ two $\kappa$ (almost) isometric ideals. Then, $E \injtensor F$ is a $\kappa$ (almost) isometric ideal in $X \injtensor Y$.
\end{theorem}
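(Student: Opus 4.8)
The plan is to follow the scheme of the proof of Theorem~\ref{tensor}, but replacing the two structural inputs used there for the projective tensor product by the corresponding features of the injective one: the injective tensor product \emph{respects subspaces} (so that $U \injtensor V$ embeds isometrically into $X \injtensor Y$ for \emph{any} closed subspaces $U \subset X$, $V \subset Y$, with no ideal hypothesis --- in particular $E \injtensor F \subset X \injtensor Y$ isometrically, which is what makes the statement meaningful) and it \emph{respects isometries} (the tensor of two isometries is an isometry). I would treat the isometric and almost isometric cases simultaneously, tracking the parameter $\eps$ in the latter. Note that, in contrast with Theorem~\ref{tensor}, Abrahamsen's Theorem~\ref{theo:abrahamsenai} is not needed here, precisely because subspaces (rather than ideals) already embed.

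Fix a subspace $W \subset X \injtensor Y$ with $\dens(W) < \kappa$ (and, in the almost case, $\eps > 0$). Choose dense sets $\{u_\alpha\}_{\alpha \in A} \subset W \cap (E \injtensor F)$ and $\{w_\beta\}_{\beta \in B} \subset W$ with $|A|, |B| < \kappa$. Since $E \otimes F$ is dense in $E \injtensor F$ and $X \otimes Y$ in $X \injtensor Y$, each $u_\alpha$ is a limit of finite sums $\sum_i e^\alpha_{ik} \otimes f^\alpha_{ik} \in E \otimes F$ and each $w_\beta$ a limit of finite sums $\sum_i x^\beta_{ik} \otimes y^\beta_{ik} \in X \otimes Y$. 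Put
$$U = \clspan\{e^\alpha_{ik}, x^\beta_{ik}\}, \qquad V = \clspan\{f^\alpha_{ik}, y^\beta_{ik}\},$$
where the spans run over all the indices above; then $U \subset X$, $V \subset Y$ are closed with $\dens(U), \dens(V) < \kappa$. As $U \injtensor V$ is closed in $X \injtensor Y$ and contains every $w_\beta$, a density argument yields $W \subset U \injtensor V$.

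Using that $E, F$ are $\kappa$ (almost) isometric ideals and $\dens(U), \dens(V) < \kappa$, I would produce ($\delta$-)isometries $\phi: U \ra E$ and $\psi: V \ra F$ fixing the points of $U \cap E$ and $V \cap F$, taking in the almost case $\delta \le \min\{\sqrt{1+\eps} - 1,\, 1 - \sqrt{1-\eps}\}$. Set $Q = \phi \otimes \psi: U \injtensor V \ra E \injtensor F$ and $P = Q|_W$. In the isometric case $Q$ is an isometry because the injective tensor product respects isometries, so $P$ is too. The step I expect to demand the most care is checking, in the almost case, that $Q$ is an $\eps$-isometry. The upper bound is immediate from $\norm{Q} = \norm{\phi}\norm{\psi} \le (1+\delta)^2$. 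For the lower bound I would corestrict $\phi, \psi$ to their (closed) ranges, where they become invertible with $\norm{\phi^{-1}}, \norm{\psi^{-1}} \le (1-\delta)^{-1}$; since $\phi(U) \injtensor \psi(V)$ sits isometrically in $E \injtensor F$ and $\norm{\phi^{-1} \otimes \psi^{-1}} = \norm{\phi^{-1}}\norm{\psi^{-1}} \le (1-\delta)^{-2}$, applying $\phi^{-1} \otimes \psi^{-1}$ to $Q(u)$ recovers $u$ and gives $\norm{Q(u)} \ge (1-\delta)^2 \norm{u}$. The choice of $\delta$ then forces $(1-\eps)\norm{u} \le \norm{Q(u)} \le (1+\eps)\norm{u}$.

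It remains to verify that $P$ fixes $W \cap (E \injtensor F)$. Given $w$ in this set and $\eps' > 0$, choose $\alpha, k$ with $\norm{w - s} < \eps'$, where $s = \sum_i e^\alpha_{ik} \otimes f^\alpha_{ik}$. Each $e^\alpha_{ik} \in U \cap E$ and $f^\alpha_{ik} \in V \cap F$ is fixed by $\phi$ and $\psi$, so $Q(s) = s$ exactly; hence
$$\norm{P(w) - w} \le \norm{Q(w - s)} + \norm{s - w} \le (2 + \eps)\norm{w - s} < (2 + \eps)\eps'.$$
Letting $\eps' \to 0$ gives $P(w) = w$, so $P$ is the desired ($\eps$-)isometry and the proof is complete.
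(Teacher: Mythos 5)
Your argument is sound, and essentially identical to the paper's, for every \emph{uncountable} $\kappa$: in that case the paper itself only remarks that the scheme of Theorem~\ref{tensor} goes through (more simply, since no analogue of Theorem~\ref{theo:simyost} is needed, the injective tensor product respecting subspaces), once one knows that the tensor product of two $\delta$-isometries is an $\eps$-isometry; your corestriction/inverse justification of that fact is correct. The genuine gap is that the statement also covers $\kappa=\aleph_0$, and there your proof collapses at the first step. If $\kappa=\aleph_0$, then $\dens(W)<\kappa$ means $W$ is finite dimensional, and you are asked to choose \emph{finite} dense sets $\{u_\alpha\}_{\alpha\in A}$, $\{w_\beta\}_{\beta\in B}$ with $|A|,|B|<\aleph_0$, which do not exist unless $W=\{0\}$. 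Nor can this be repaired by switching to finite spanning sets: an element of $X\injtensor Y$ is in general only a \emph{limit} of finite sums of elementary tensors, never a finite sum itself, so the spaces $U,V$ that must contain all components of the approximating sums are closed spans of countably many vectors --- typically separable and infinite dimensional. Then $\dens(U)<\aleph_0$ fails, and the $\aleph_0$ (almost) isometric ideal property of $E$ and $F$ cannot be invoked at all. Your construction implicitly uses that a union of $<\kappa$ many countable families still has cardinality $<\kappa$, which is exactly the point where uncountability of $\kappa$ enters.

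This is precisely why the paper splits the proof: the case $\kappa=\aleph_0$ (the classical almost isometric ideal case) is deferred to Theorem~\ref{theorem:aiidealinyect}, whose proof is genuinely different. There one first reduces, via the perturbation Lemmata~\ref{lema1teninj} and \ref{lema2teninj}, to showing that $Z\otimes_\eps W$ is an almost isometric ideal in $X\otimes_\eps Y$; on the \emph{algebraic} tensor product every element really is a finite sum of elementary tensors, so finite-dimensional ``support'' spaces $U,V$ do exist and the rest of your scheme applies. Note, moreover, that those lemmata are perturbative and only yield the \emph{almost} isometric conclusion: for isometric ideals with $\kappa=\aleph_0$ the paper explicitly does not know the answer (Remark~\ref{remark:consequencestensor} and the final question of Section~\ref{sect:openquestion}), so a proof of the isometric case valid for all infinite $\kappa$, as your proposal claims to be, would in particular settle an open problem --- a further indication that the argument cannot work as stated at $\aleph_0$. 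To fix your write-up, either restrict it to uncountable $\kappa$ (matching what the paper actually proves in this theorem) or supply a separate finite-dimensional perturbation argument for $\kappa=\aleph_0$ in the almost isometric case.
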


When $\kappa$ is uncountable the proof is completely analogous to the one given for Theorem \ref{tensor}. The only change here is that we use the well known fact that if $\phi, \psi$ are ($\delta$-)isometries, then $\phi \injtensor \psi$ is also an ($\eps$-)isometry (where we choose $\delta$ as in the proof of Theorem \ref{AUDsubsp}). The proof when $\kappa = \aleph_0$, that is, for the classical notion of (almost) isometric ideal will be given in Theorem~\ref{theorem:aiidealinyect} because this case will make use of arguments that only work for finite dimensional spaces.

We end the section by establishing connections with recent transfinite versions of octahedral norms and of almost square Banach spaces. Let us borrow the following definition from \cite{acllr23}.

\begin{definition}\label{defi:asqyohlargos} Let $X$ be a Banach space and $\kappa$ an infinite cardinal. 
\begin{itemize}
	    \item[(a)] We say that $X$ is \emph{$<\kappa$-octahedral} if, for every subspace $Y\subset X$ with $\dens(Y)<\kappa$ and $\varepsilon>0$, there exists $x\in S_X$ such that
        $$\|y+\lambda x\|\ge(1-\varepsilon)(\|y\|+|\lambda|)$$
        holds for every $y\in Y$ and every $\lambda\in\mathbb R$.
	    \item[(b)] We say that $X$ fails the \textit{$(-1)$-BCP$_{<\kappa}$} if , for every subspace $Y\subset X$ with $\dens(Y)<\kappa$, there exists $x\in S_X$ such that
        $$\|y+\lambda x\|=\|y\|+|\lambda|$$
        holds for every $y\in Y$ and every $\lambda\in\mathbb R$.
        \item[(c)] We say that $X$ is \textit{$<\kappa$-almost square} ($\ASQ{\kappa}$, for short) if, for every subspace $Y\subset X$ with $\dens(Y)<\kappa$ and $\varepsilon>0$, there exists $x\in S_X$ such that $$\Vert y+\lambda x\Vert\le (1+\varepsilon)\max\{\Vert y\Vert,\vert \lambda\vert\}$$
        holds for every $y\in Y$ and every $\lambda\in\mathbb R$.
	    \item[(d)] We say that $X$ is \textit{$<\kappa$-square} (${SQ}_{<\kappa}$, for short) if, for every subspace $Y\subset X$ with $\dens(Y)<\kappa$, there exists $x\in S_X$ such that $$\Vert y+\lambda x\Vert\le \max\{\Vert y\Vert,\vert \lambda\vert\}$$
    holds for every $y\in Y$ and every $\lambda\in\mathbb R$.
\end{itemize}
\end{definition}

$<\kappa$-octahedral spaces and spaces failing $(-1)$-BCP$_{<\kappa}$ were first introduced in \cite{cll23} (we took notation from \cite{acllr23}, which is a little different in order to avoid technical problems when dealing with limit cardinals). Anyway, the authors introduced this notion in order to provide a natural transfinite version of the classical notion of \textit{octahedral Banach spaces} (see. e.g. \cite[Chapters 3 and 5]{lan} for background on octahedral spaces). We refer to \cite{agmrz23,cll23} for background about $<\kappa$-octahedral spaces and spaces failing $(-1)$-BCP$_{<\kappa}$. 

Concerning $\ASQ{\kappa}$ and ${SQ}_{<\kappa}$, these notions were introduced in \cite{acllr23} in order to provide a natural transfinite version of \textit{almost square spaces} (see \cite{all16,blr16} for background on almost square Banach spaces).

The first aim is to show that transfinite octahedrality and transfinite almost squareness are properties inherited by transfinite almost isometric ideals (similarly, transfinite squareness and the failure of  $(-1)$-BCP are inherited by transfinite isometric ideals). Before the formal presentation of the results we will enunciate the following lemma of geometric nature, which will be used in the following result, for easy reference.

\begin{lemma}\label{lemma:geompropilargas}\cite[Lemma 2.2]{all16}
Let $X$ be a Banach space, $x,y\in S_X$ and $\varepsilon>0$. If $\Vert x\pm y\Vert\le 1+\varepsilon$, then
    $$(1-\varepsilon)\max\{\vert \alpha\vert,\vert \beta\vert\}\leq \Vert \alpha x+\beta y\Vert \leq (1+\varepsilon)\max\{\vert \alpha\vert,\vert \beta\vert\}.$$
\end{lemma}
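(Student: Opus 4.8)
The plan is to establish the two inequalities separately: the upper bound will be a direct consequence of the triangle inequality together with an elementary scalar identity, and the lower bound will then follow by feeding the upper bound back into a suitable decomposition.

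First I would prove the upper estimate. The key observation is the decomposition
$$\alpha x + \beta y = \frac{\alpha+\beta}{2}(x+y) + \frac{\alpha-\beta}{2}(x-y),$$
which, combined with the triangle inequality and the hypothesis $\Vert x\pm y\Vert\le 1+\varepsilon$, yields
$$\Vert\alpha x+\beta y\Vert\le (1+\varepsilon)\,\frac{\vert\alpha+\beta\vert+\vert\alpha-\beta\vert}{2}.$$
The proof of the upper bound is then completed by invoking the elementary identity $\frac{\vert\alpha+\beta\vert+\vert\alpha-\beta\vert}{2}=\max\{\vert\alpha\vert,\vert\beta\vert\}$, valid for all real $\alpha,\beta$ (one checks it at once by distinguishing the signs, or by assuming $\vert\alpha\vert\ge\vert\beta\vert$).

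For the lower estimate I would argue by symmetry, assuming without loss of generality that $\max\{\vert\alpha\vert,\vert\beta\vert\}=\vert\alpha\vert$; the case $\vert\beta\vert\ge\vert\alpha\vert$ is identical after interchanging the roles of $x$ and $y$, which is legitimate since the hypothesis $\Vert x\pm y\Vert\le 1+\varepsilon$ is symmetric in $x$ and $y$. Since $\Vert x\Vert=1$ we have $\vert\alpha\vert=\Vert\alpha x\Vert$, and writing
$$\alpha x = \tfrac12\bigl[(\alpha x+\beta y)+(\alpha x-\beta y)\bigr]$$
gives $\vert\alpha\vert\le \tfrac12\Vert\alpha x+\beta y\Vert+\tfrac12\Vert\alpha x-\beta y\Vert$. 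Applying the already established upper bound to $\alpha x-\beta y=\alpha x+(-\beta)y$ produces $\Vert\alpha x-\beta y\Vert\le(1+\varepsilon)\max\{\vert\alpha\vert,\vert\beta\vert\}=(1+\varepsilon)\vert\alpha\vert$, so that $\vert\alpha\vert\le\tfrac12\Vert\alpha x+\beta y\Vert+\tfrac12(1+\varepsilon)\vert\alpha\vert$. Rearranging yields $\Vert\alpha x+\beta y\Vert\ge(1-\varepsilon)\vert\alpha\vert=(1-\varepsilon)\max\{\vert\alpha\vert,\vert\beta\vert\}$, as desired.

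I do not expect any serious obstacle here, as the whole argument is elementary and purely geometric. The only point requiring a moment's care is the lower bound, where the trick is to recover a lower estimate for $\Vert\alpha x+\beta y\Vert$ by reusing the upper bound applied to the sign-flipped vector $\alpha x-\beta y$ and exploiting that $x$ lies on the unit sphere; once this reuse is noticed, the inequality follows by a one-line rearrangement.
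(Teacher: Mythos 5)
Your proof is correct. Note that the paper itself gives no proof of this lemma at all --- it is quoted verbatim from \cite[Lemma 2.2]{all16} --- so there is nothing internal to compare against; your argument is a valid, self-contained substitute, and it is essentially the standard one: your decomposition $\alpha x+\beta y=\frac{\alpha+\beta}{2}(x+y)+\frac{\alpha-\beta}{2}(x-y)$ is exactly the convexity argument used in the cited reference (there phrased by normalizing to $x+ty$ with $\vert t\vert\le 1$), and your lower bound, recovering $\alpha x$ as $\tfrac12\bigl[(\alpha x+\beta y)+(\alpha x-\beta y)\bigr]$ and reusing the upper bound on the sign-flipped vector, is the same trick as well. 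Both steps check out, including the identity $\frac{\vert\alpha+\beta\vert+\vert\alpha-\beta\vert}{2}=\max\{\vert\alpha\vert,\vert\beta\vert\}$ and the symmetry reduction to the case $\vert\alpha\vert\ge\vert\beta\vert$.
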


Now we are ready to provide the following result.

\begin{proposition}\label{teo:hereaiidealesasqlargos}
Let $X$ be an $\ASQ{\kappa}$ Banach space and $Y\subseteq X$ a $\kappa$ almost isometric ideal in $X$. Then $Y$ is $\ASQ{\kappa}$.
\end{proposition}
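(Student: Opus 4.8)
The plan is to transfer an almost-square direction from $X$ into $Y$ via the ideal property, while pinning down a prescribed small subspace. So fix a subspace $Z\subseteq Y$ with $\dens(Z)<\kappa$ and $\varepsilon>0$; the goal is to produce $y\in S_Y$ witnessing the $\ASQ{\kappa}$ inequality on $Z$. First I would fix a small auxiliary parameter $\delta>0$, to be specified at the very end so that $(1+\delta)^2/(1-\delta)\le 1+\varepsilon$. Since $Z$ is in particular a subspace of $X$ with $\dens(Z)<\kappa$ and $X$ is $\ASQ{\kappa}$, there exists $x\in S_X$ such that
$$\norm{z+\lambda x}\le (1+\delta)\max\{\norm{z},|\lambda|\}\qquad\text{for all }z\in Z,\ \lambda\in\R.$$

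Next I would pull $x$ back into $Y$. Consider $E:=\clspan(Z\cup\{x\})\subseteq X$, which still satisfies $\dens(E)<\kappa$ because $\kappa$ is infinite. Since $Y$ is a $\kappa$ almost isometric ideal in $X$, there is a $\delta$-isometry $T\colon E\ra Y$ fixing the points of $E\cap Y$; as $Z\subseteq Y$, the map $T$ fixes every element of $Z$. Set $y_0:=T(x)\in Y$, so that $(1-\delta)\le\norm{y_0}\le(1+\delta)$. For every $z\in Z$ and $\lambda\in\R$ linearity gives $T(z+\lambda x)=T(z)+\lambda T(x)=z+\lambda y_0$, and combining the upper $\delta$-isometry bound with the inequality from the previous paragraph yields
$$\norm{z+\lambda y_0}=\norm{T(z+\lambda x)}\le(1+\delta)\norm{z+\lambda x}\le(1+\delta)^2\max\{\norm{z},|\lambda|\}.$$

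Finally I would normalize. Put $y:=y_0/\norm{y_0}\in S_Y$, and for $\lambda\in\R$ write $\mu:=\lambda/\norm{y_0}$, so that $z+\lambda y=z+\mu y_0$ and $|\mu|\le|\lambda|/(1-\delta)$. Applying the last display with $\mu$ in place of $\lambda$ gives
$$\norm{z+\lambda y}=\norm{z+\mu y_0}\le(1+\delta)^2\max\{\norm{z},|\mu|\}\le\frac{(1+\delta)^2}{1-\delta}\max\{\norm{z},|\lambda|\},$$
and the choice of $\delta$ makes the right-hand constant at most $1+\varepsilon$, which is exactly the $\ASQ{\kappa}$ condition for $Z$. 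The whole argument is a routine transfer; the only point demanding care is the bookkeeping of the three multiplicative distortions, one from the almost-squareness of $X$, one from the $\delta$-isometry $T$, and one from renormalizing $y_0$. I do not expect any genuine obstacle, since each of these constants tends to $1$ as $\delta\to0^+$, so their product can be driven below $1+\varepsilon$.
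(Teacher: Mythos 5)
Your proof is correct, and it shares the same skeleton as the paper's: take the $\ASQ{\kappa}$ witness $x\in S_X$ for $Z$, form $E=\lspan(Z\cup\{x\})$, apply the $\kappa$ almost isometric ideal property to get an operator $T\colon E\ra Y$ fixing $Z$, and normalize $T(x)$. Where you diverge is the concluding estimate. The paper reduces to unit vectors, shows $\bigl\Vert z/\Vert z\Vert\pm y\bigr\Vert\le 1+4\varepsilon$ by a triangle-inequality comparison of $y$ with $\phi(x)$, and then invokes the geometric Lemma~\ref{lemma:geompropilargas} (Lemma 2.2 of \cite{all16}) to recover the full two-parameter inequality, ending with the constant $1+4\varepsilon$ and an arbitrariness-of-$\varepsilon$ argument. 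You instead exploit that $T$ fixes \emph{all} of $Z$, so $T(z+\lambda x)=z+\lambda T(x)$ for every $z\in Z$ and $\lambda\in\R$, which lets you push the full $\ASQ{\kappa}$ inequality through the upper bound of the $\delta$-isometry directly; the renormalization of $T(x)$ is then absorbed by the substitution $\mu=\lambda/\Vert T(x)\Vert$, giving the explicit constant $(1+\delta)^2/(1-\delta)$. Your route is more self-contained (no auxiliary lemma needed), handles $z=0$ uniformly rather than implicitly, and fixes the quantifier bookkeeping up front by choosing $\delta$ at the start; the paper's route, at the cost of the lemma, also yields the lower estimate $(1-4\varepsilon)\max\{\Vert z\Vert,\vert\lambda\vert\}\le\Vert z+\lambda y\Vert$ for free, which is the template reused for the squareness and octahedrality variants (Propositions~\ref{prop:heredaiisosq} and \ref{bajaocta}).
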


\begin{proof}
Let $Z\subseteq Y$ be a subspace with $\dens(Z)<\kappa$ and let $\varepsilon>0$. Let us find $y\in S_Y$ such that $\Vert z+\lambda y\Vert\leq (1+\varepsilon)\max\{\Vert z\Vert,\vert\lambda\vert\}$ for any $z \in Z$ and $\lambda \in \R$. Since $Z\subseteq X$ we can find $x\in S_X$ satisfying that
$$\Vert z+\lambda x\Vert\leq \left(1+\eps\right)\max\{\Vert z\Vert,\vert\lambda\vert\}.$$
Define $E:=\lspan\{Z\cup\{x\}\}$, which is a subspace of $X$ such that $\dens(E)<\kappa$. By the assumption there exists $\phi: E\longrightarrow Y$ satisfying that $\phi(e)=e$ holds for every $e\in Y\cap E$ and such that, for every $e\in E$ the following inequality holds 
$$\left(1-\eps\right)\Vert e\Vert\leq \Vert \phi(e)\Vert\leq \left(1+\eps\right)\Vert e\Vert.$$
Let $y:=\frac{\phi(x)}{\Vert \phi(x)\Vert}\in S_Y$, and let us show that $y$ satisfies our purposes. 
Observe that
$$\Vert y-\phi(x)\Vert=\vert 1-\Vert \phi(x)\Vert \vert\leq\eps$$
and so
\[\begin{split}
\left\Vert \frac{z}{\Vert z\Vert}\pm y\right\Vert&\leq \left\Vert \frac{z}{\Vert z\Vert}\pm \phi(x)\right\Vert+\Vert y-\phi(x)\Vert \leq \left\Vert \phi\left(\frac{z}{\Vert z\Vert}\pm x\right)\right\Vert+\eps\\
& \leq (1+\varepsilon) \left\Vert \frac{z}{\Vert z\Vert}\pm x\right\Vert+\eps\leq (1+\varepsilon)^2+\eps\leq 1+4\varepsilon.\end{split}
\]
Now Lemma~\ref{lemma:geompropilargas} implies, 
$$(1-4\varepsilon)\max\{\Vert z\Vert,\vert\lambda\vert\}\leq \left\Vert \Vert z\Vert \frac{z}{\Vert z\Vert}+\lambda y\right\Vert=\Vert z+\lambda y\Vert \leq (1+4\varepsilon)\max\{\Vert z\Vert, \vert \lambda\vert\}.$$
Summarising we have proved that for every  $Z\subseteq Y$ such that $\dens(Z)<\kappa$ and every $\varepsilon>0$ there exists $y\in S_Y$ such that $\Vert z+\lambda y\Vert\leq (1+4\varepsilon)\max\{\Vert y\Vert,\vert \lambda\vert\}$ holds for every $z\in Z$ and every $\lambda\in\mathbb R$. The arbitrariness of $\varepsilon>0$ implies that $Y$ is $\ASQ{\kappa}$, as desired.
\end{proof}

An adaptation of the proof above allows us to obtain a rigid version in the following sense.

\begin{proposition}\label{prop:heredaiisosq}
Let $X$ be an $\SQ{\kappa}$ Banach space and $Y\subseteq X$ a $\kappa$ isometric ideal in $X$. Then $Y$ is $\SQ{\kappa}$.
\end{proposition}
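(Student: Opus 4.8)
The plan is to follow the template of the proof of Proposition~\ref{teo:hereaiidealesasqlargos}, but to exploit the rigidity of the $\kappa$ isometric ideal hypothesis, which removes every $\varepsilon$ that appeared there. Fix a subspace $Z\subseteq Y$ with $\dens(Z)<\kappa$; the goal is to produce a single $y\in S_Y$ satisfying $\norm{z+\lambda y}\le \max\{\norm{z},\abs{\lambda}\}$ for all $z\in Z$ and $\lambda\in\mathbb{R}$, after which the conclusion that $Y$ is $\SQ{\kappa}$ follows from the arbitrariness of $Z$.

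First I would regard $Z$ as a subspace of $X$ and invoke the hypothesis that $X$ is $\SQ{\kappa}$ to obtain $x\in S_X$ such that $\norm{z+\lambda x}\le \max\{\norm{z},\abs{\lambda}\}$ holds for every $z\in Z$ and every $\lambda\in\mathbb{R}$. Then I would set $E:=\lspan(Z\cup\{x\})$, which still satisfies $\dens(E)<\kappa$, and apply the assumption that $Y$ is a $\kappa$ isometric ideal in $X$: this yields a linear isometry $\phi\colon E\longrightarrow Y$ fixing every point of $E\cap Y$. The candidate is $y:=\phi(x)$. Since $\phi$ is an isometry and $x\in S_X$, we get $\norm{y}=1$ for free, so $y\in S_Y$ with no normalisation step (this is exactly where the almost isometric proof had to pass to $\phi(x)/\norm{\phi(x)}$ and absorb an error). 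Moreover, because $Z\subseteq Y\cap E$, the operator $\phi$ acts as the identity on $Z$, so by linearity $z+\lambda y=\phi(z)+\lambda\phi(x)=\phi(z+\lambda x)$ for all $z\in Z$ and $\lambda\in\mathbb{R}$, and therefore
\[
\norm{z+\lambda y}=\norm{\phi(z+\lambda x)}=\norm{z+\lambda x}\le \max\{\norm{z},\abs{\lambda}\}.
\]
This is precisely the desired inequality.

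I expect essentially no hard step: here the rigidity is what makes the argument clean. In particular, the exact isometry lets me use the equalities $\norm{\phi(x)}=1$ and $\phi(z)=z$ for $z\in Z$ directly, so no appeal to Lemma~\ref{lemma:geompropilargas} is needed (that lemma was required only in the almost isometric case, to convert an approximate upper bound into a two-sided estimate). The only points demanding care are the bookkeeping on density characters—checking that $\dens(E)<\kappa$—and the observation that $Z$ lies inside $E\cap Y$, so that its points are genuinely preserved by $\phi$.
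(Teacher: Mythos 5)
Your proof is correct and is precisely the adaptation the paper has in mind: the paper gives no separate argument for this proposition, stating only that it follows by adapting the proof of Proposition~\ref{teo:hereaiidealesasqlargos}, and your write-up carries out that adaptation faithfully. You also correctly identify the simplifications that rigidity buys—no normalisation of $\phi(x)$, no appeal to Lemma~\ref{lemma:geompropilargas}—since the exact isometry gives $\norm{z+\lambda y}=\norm{\phi(z+\lambda x)}=\norm{z+\lambda x}$ directly.
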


Moreover, we can obtain versions for $< \kappa$-octahedral spaces with similar proofs.

\begin{proposition}\label{bajaocta}
Let $X$ be a Banach space and let $Y$ be a subspace.
\begin{enumerate}
    \item Assume that $X$ is $<\kappa$-octahedral and that $Y\subseteq X$ is a $\kappa$ almost isometric ideal in $X$. Then $Y$ is $<\kappa$-octahedral.
    \item Assume that  $X$ fails the $(-1)$-BCP$_{<\kappa}$ and that $Y\subseteq X$ is a $\kappa$ isometric ideal in $X$. Then $Y$ fails $(-1)$-BCP$_{<\kappa}$.
\end{enumerate}
\end{proposition}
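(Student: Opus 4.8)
The plan is to follow, almost verbatim, the scheme of Propositions~\ref{teo:hereaiidealesasqlargos} and \ref{prop:heredaiisosq}, replacing the almost-square inequality by the octahedral one. For part (1) I would fix a subspace $Z\subseteq Y$ with $\dens(Z)<\kappa$ and an arbitrary $\eps>0$, and use that $X$ is $<\kappa$-octahedral to produce $x\in S_X$ with
$$\Vert z+\lambda x\Vert\ge (1-\eps)(\Vert z\Vert+\vert\lambda\vert)\qquad\text{for all }z\in Z,\ \lambda\in\mathbb R.$$
Setting $E:=\lspan(Z\cup\{x\})$, which satisfies $\dens(E)<\kappa$, the hypothesis that $Y$ is a $\kappa$ almost isometric ideal yields an $\eps$-isometry $\phi:E\longrightarrow Y$ fixing $E\cap Y$; in particular $\phi(z)=z$ for every $z\in Z$, since $Z\subseteq E\cap Y$. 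The candidate witness is $y:=\phi(x)/\Vert\phi(x)\Vert\in S_Y$.

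The key computation is that, by linearity of $\phi$ and the fact that it fixes $Z$, one has $\phi(z+\lambda x)=z+\lambda\phi(x)$, whence the $\eps$-isometry property gives
$$\Vert z+\lambda\phi(x)\Vert=\Vert\phi(z+\lambda x)\Vert\ge (1-\eps)\Vert z+\lambda x\Vert\ge (1-\eps)^2(\Vert z\Vert+\vert\lambda\vert).$$
Exactly as in Proposition~\ref{teo:hereaiidealesasqlargos}, the estimate $\Vert y-\phi(x)\Vert=\vert 1-\Vert\phi(x)\Vert\vert\le\eps$ then lets me pass from $\phi(x)$ to $y$:
\begin{align*}
\Vert z+\lambda y\Vert &\ge \Vert z+\lambda\phi(x)\Vert-\vert\lambda\vert\,\Vert y-\phi(x)\Vert \\
&\ge (1-\eps)^2(\Vert z\Vert+\vert\lambda\vert)-\eps\vert\lambda\vert \\
&\ge (1-3\eps)(\Vert z\Vert+\vert\lambda\vert),
\end{align*}
where the last step uses $(1-\eps)^2\ge 1-2\eps\ge 1-3\eps$ and $(1-\eps)^2-\eps=1-3\eps+\eps^2\ge 1-3\eps$. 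Since $\eps>0$ was arbitrary, this shows $Y$ is $<\kappa$-octahedral.

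For part (2) the same scheme applies, with two simplifications produced by the stronger hypotheses: $X$ failing $(-1)$-BCP$_{<\kappa}$ supplies $x\in S_X$ with the exact equality $\Vert z+\lambda x\Vert=\Vert z\Vert+\vert\lambda\vert$, and $Y$ being a $\kappa$ isometric ideal supplies a genuine isometry $\phi:E\longrightarrow Y$ fixing $E\cap Y$. Hence $\Vert\phi(x)\Vert=1$, no normalisation is needed, and taking $y:=\phi(x)\in S_Y$ directly one gets $\Vert z+\lambda y\Vert=\Vert\phi(z+\lambda x)\Vert=\Vert z+\lambda x\Vert=\Vert z\Vert+\vert\lambda\vert$, so that $Y$ fails $(-1)$-BCP$_{<\kappa}$.

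I do not expect a genuine obstacle here: the isometric case (2) is degenerate, all inequalities collapsing to equalities, and (1) is simply its $\eps$-perturbation. The only point requiring care is the bookkeeping of constants in (1), namely checking that the passage from $\phi(x)$ to the normalised $y$ costs only a term $\eps\vert\lambda\vert$, which is absorbed into the factor $(1-3\eps)$; the arbitrariness of $\eps$ then closes the argument.
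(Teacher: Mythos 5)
Your proof is correct and is precisely the adaptation the paper has in mind: the paper omits the proof of Proposition~\ref{bajaocta}, stating only that it follows by arguments similar to Propositions~\ref{teo:hereaiidealesasqlargos} and \ref{prop:heredaiisosq}, which is exactly what you carry out (with the simplification that the sum-type octahedral estimate needs only the triangle inequality, not Lemma~\ref{lemma:geompropilargas}). The constant bookkeeping in (1) and the degenerate isometric case (2) both check out.
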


In the sequel we will prove that, indeed, the above four properties have a characterisation in terms of transfinite (almost) isometric ideals.

\begin{theorem}\label{theo:caraasqlargosaiideales}
Let $X$ be a Banach space and $\kappa$ an infinite cardinal. The following are equivalent:
\begin{enumerate}
    \item $X$ is $\ASQ{\kappa}$.
    \item $X\times \{0\}$ is a $\kappa$ almost isometric ideal in $X\oplus_\infty\mathbb R$.
\end{enumerate}
\end{theorem}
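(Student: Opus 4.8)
The plan is to prove the equivalence by directly translating the $\ASQ{\kappa}$ property of $X$ into a statement about almost isometric extensions in the one-codimensional superspace $X\oplus_\infty\mathbb R$, and conversely. Throughout I identify $X$ with $X\times\{0\}$ and write $u=(0,1)$ for the extra unit vector, so that every element of $X\oplus_\infty\mathbb R$ is of the form $(z,\lambda)$ with norm $\max\{\Vert z\Vert,\vert\lambda\vert\}$. The key observation bridging the two sides is that the $\ASQ{\kappa}$ witness $x\in S_X$ for a subspace $Z$ is precisely the data needed to almost-isometrically ``absorb'' the direction $u$ into $X$: the map sending $z+\lambda u$ to $z+\lambda x$ nearly preserves $\max\{\Vert z\Vert,\vert\lambda\vert\}=\Vert z+\lambda u\Vert$ exactly because $\Vert z+\lambda x\Vert\le(1+\varepsilon)\max\{\Vert z\Vert,\vert\lambda\vert\}$ (upper bound from ASQ) while the lower bound $\Vert z+\lambda x\Vert\ge(1-\varepsilon)\max\{\Vert z\Vert,\vert\lambda\vert\}$ follows from $x\in S_X$ and a short estimate applied to $\Vert (z/\Vert z\Vert)\pm x\Vert$ together with Lemma~\ref{lemma:geompropilargas}, exactly as in the proof of Proposition~\ref{teo:hereaiidealesasqlargos}.

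For the implication (1)$\Rightarrow$(2), given a subspace $E\subseteq X\oplus_\infty\mathbb R$ with $\dens(E)<\kappa$ and $\varepsilon>0$, I would first reduce to the two cases $E\subseteq X\times\{0\}$ (where the identity works) and $E\not\subseteq X\times\{0\}$. In the latter case $E=E_0\oplus\mathbb R u$ up to the projection onto the last coordinate, where $E_0\subseteq X$ has $\dens(E_0)<\kappa$; applying $\ASQ{\kappa}$ to $E_0$ yields $x\in S_X$ with $\Vert z+\lambda x\Vert\le(1+\varepsilon)\max\{\Vert z\Vert,\vert\lambda\vert\}$ for all $z\in E_0$, $\lambda\in\mathbb R$. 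I then define $T:E\to X\times\{0\}$ by $T(z,\lambda)=(z+\lambda x,0)$ and verify it is an $\varepsilon'$-isometry (for a suitable $\varepsilon'\to0$ with $\varepsilon$) using the two-sided estimate above, and that it fixes $E\cap(X\times\{0\})=E_0\times\{0\}$ since those elements have $\lambda=0$. The only subtlety is handling a general subspace $E$ whose last-coordinate projection need not split off cleanly, but choosing one preimage $x_0+\mu u\in E$ of $u$ with $\mu\ne0$ and rescaling lets me arrange the direct-sum description up to harmless constants.

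For (2)$\Rightarrow$(1), given a subspace $Z\subseteq X$ with $\dens(Z)<\kappa$ and $\varepsilon>0$, I would apply the $\kappa$ almost isometric ideal hypothesis to the subspace $E:=Z\oplus\mathbb R u\subseteq X\oplus_\infty\mathbb R$, which still has density $<\kappa$, obtaining an $\varepsilon$-isometry $T:E\to X\times\{0\}$ fixing $E\cap(X\times\{0\})=Z\times\{0\}$. Setting $x:=T(u)\in X$, the fixing property gives $T(z,\lambda)=(z,0)+\lambda(x,0)=(z+\lambda x,0)$ for $z\in Z$, and the $\varepsilon$-isometry inequality $\Vert T(z,\lambda)\Vert\le(1+\varepsilon)\Vert(z,\lambda)\Vert=(1+\varepsilon)\max\{\Vert z\Vert,\vert\lambda\vert\}$ is exactly the $\ASQ{\kappa}$ estimate; taking $z=0,\lambda=1$ shows $\Vert x\Vert\ge 1-\varepsilon$, so after normalising $x$ to $S_X$ and absorbing the resulting $O(\varepsilon)$ error, $Z$ admits the required witness. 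The main obstacle I anticipate is purely bookkeeping: tracking how the single parameter $\varepsilon$ degrades through the normalisation of $x$ and through the conversion between the ``$(1\pm\varepsilon)$-isometry'' formulation and the ``$(1+\varepsilon)\max$'' formulation, which is handled by the arbitrariness of $\varepsilon$ exactly as in Proposition~\ref{teo:hereaiidealesasqlargos} — no genuinely new difficulty arises, only the care needed to confirm the last-coordinate splitting in (1)$\Rightarrow$(2).
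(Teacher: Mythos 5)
Your implication (2)$\Rightarrow$(1) is correct and coincides with the paper's proof (same test subspace $\lspan\{(Z\times\{0\})\cup\{(0,1)\}\}$, same normalisation of $T(0,1)$, same absorption of the $\varepsilon$-errors), and your template for (1)$\Rightarrow$(2) --- the map $T(z,\lambda)=(z+\lambda x,0)$ together with Lemma~\ref{lemma:geompropilargas} to upgrade the ASQ upper bound to a two-sided estimate --- is also the paper's. The genuine gap is the choice of the subspace of $X$ to which you apply $\ASQ{\kappa}$. You set $E_0\times\{0\}=E\cap(X\times\{0\})$ and take the ASQ witness $x$ for $E_0$; but for $T$ to be an $\varepsilon$-isometry you need $\Vert z+\lambda x\Vert\leq(1+\varepsilon)\max\{\Vert z\Vert,\vert\lambda\vert\}$ for \emph{every} $(z,\lambda)\in E$, that is, for every $z$ in the image $P(E)$ of $E$ under the canonical projection $P:X\oplus_\infty\R\longrightarrow X\times\{0\}$. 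When $(0,1)\notin E$, the space $P(E)$ is strictly larger than $E\cap(X\times\{0\})$, and your patch (choose $(x_0,\mu)\in E$ with $\mu\neq 0$ and rescale) does not repair this: the discrepancy between the $X$-component $z=w+\lambda x_0/\mu$ of a generic element of $E$ and the space $E_0$ is the vector $\lambda x_0/\mu$, whose norm can be as large as $\Vert(z,\lambda)\Vert$ itself, so it is not a ``harmless constant'' absorbable into $\varepsilon$.

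Concretely: let $X$ be any $\ASQ{\kappa}$ space, $x_0\in S_X$, and $E=\R(x_0,1)$. Then $E\cap(X\times\{0\})=\{0\}$, so applying $\ASQ{\kappa}$ to $E_0=\{0\}$ is vacuous --- every $x\in S_X$ is an admissible witness, in particular $x=x_0$. Your map then sends the norm-one vector $(x_0,1)$ to $(2x_0,0)$, which has norm $2$, so $T$ is not an $\varepsilon$-isometry for any $\varepsilon<1$; the argument as written draws a conclusion that the invoked hypothesis does not support. The repair is exactly the paper's move: apply $\ASQ{\kappa}$ to $Y:=P(E)$, which still has density $<\kappa$ and contains $x_0/\mu$. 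Then every $(z,\lambda)\in E$ satisfies $z\in Y$, so the ASQ upper bound applies, Lemma~\ref{lemma:geompropilargas} yields the lower bound, and the fixing of $E\cap(X\times\{0\})$ is automatic from the formula, since those elements have $\lambda=0$. With that single change (and then no case distinction is needed at all), your argument becomes the paper's proof.
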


\begin{proof}
1$\Rightarrow$2. Let $E\subseteq X\oplus_\infty\mathbb R$ be a subspace such that  $\dens(E) <\kappa$ and let $\varepsilon>0$. Consider $P:X\oplus_\infty\mathbb R\longrightarrow X\times\{0\}$ the natural projection and define $Y:=P(E)\subseteq X\times \{0\}$. Since $Y$ satisfies $\dens(Y)<\kappa$ and $X\times \{0\}$ is $\ASQ{\kappa}$ (since the above space is clearly isometrically isomorphic to $X$) there exists $x\in S_X$ satisfying
\begin{align}\label{ASQ1}
\Vert (y+\lambda x,0)\Vert\leq (1+\varepsilon)\max\{\Vert y\Vert,\vert \lambda\vert\}
\end{align}
for every $(y, 0) \in Y$ and $\lambda \in \R$.

Define $T:E\longrightarrow X\times\{0\}$ by the equation
$$T(z,\lambda):=(z+\lambda x,0).$$
It is obvious that $T(z,0)=(z,0)$ holds for every $(z,0)\in E\cap X\times \{0\}$. On the other hand, given $(z,\lambda) \in E$ we have that $(z,0) \in Y$ and if $z\neq 0$ by (\ref{ASQ1}) we obtain that
$$\left\Vert \frac{z}{\Vert z\Vert}\pm x\right\Vert = \norm{\left(\frac{z}{\Vert z\Vert}\pm x, 0  \right)}\leq 1+\varepsilon.$$
By an application of Lemma~\ref{lemma:geompropilargas}, in virtue of the equality $\Vert (z+\lambda x,0)\Vert=\Vert \Vert z\Vert \frac{z}{\Vert z\Vert}+\lambda x\Vert$, we conclude that 
\[\begin{split}(1-\varepsilon)\Vert (z,\lambda)\Vert & =  (1-\varepsilon)\max\{\Vert z\Vert,\vert \lambda\vert\} \leq \Vert (z+\lambda x,0)\Vert = \norm{T(z,\lambda)}\\
& \leq (1+\varepsilon)\max\{\Vert z\Vert,\vert \lambda\vert\}=(1+\varepsilon)\Vert (z,\lambda)\Vert.\end{split}\]
Consequently (2) follows.

2$\Rightarrow$1. Let us prove that $X\times \{0\}$ is $\ASQ{\kappa}$. To this end, take a subspace $Y\subseteq X\times \{0\}$ such that $\dens(Y)<\kappa$ and $\varepsilon>0$. Define
$$E:=\lspan\{Y\cup\{(0,1)\}\}\subseteq X\oplus_\infty \mathbb R$$
which will have density $< \kappa$. By the assumption there exists some $T:E\longrightarrow X\times \{0\}$ such that $T(y,0)=(y,0)$ holds for every $(y,0)\in E$ and such that
\[\begin{split}(1-\varepsilon)\max\{\Vert y\Vert,\vert\lambda\vert\} =(1-\varepsilon)\Vert (y,\lambda)\Vert& \leq \Vert T(y,\lambda)\Vert\leq (1+\varepsilon)\Vert (y,\lambda)\Vert\\
& =(1+\varepsilon)\max\{\Vert y\Vert,\vert \lambda\vert\}.
\end{split}\]
In other words
$$(1-\varepsilon)\max\{\Vert y\Vert,\vert\lambda\vert\}\leq \Vert T(y,0)+\lambda T(0,1)\Vert\leq (1+\varepsilon)\max\{\Vert y\Vert,\vert \lambda\vert\}.$$
In particular, for $(y,0) \in Y$ and $\lambda \in \R$
\[\begin{split}
\left \Vert (y,0)+\lambda\frac{T(0,1)}{\Vert T(0,1)\Vert}\right\Vert& =\left\Vert T(y,0)+\frac{\lambda}{\Vert T(0,1)\Vert}T(0,1)\right\Vert \\
& 
 \leq (1+\varepsilon)\max\left\{\Vert y\Vert,\frac{\abs{\lambda}}{\Vert T(0,1)\Vert}\right\} \\
& =\frac{1+\varepsilon}{\Vert T(0,1)\Vert}\max\{\norm{T(0,1)}\Vert y\Vert,\vert\lambda\vert\}\\
& \leq \frac{1+\varepsilon}{\Vert T(0,1)\Vert}\max\{(1+\eps)\Vert y\Vert,(1+\eps)\vert\lambda\vert\}\\
& = \frac{(1+\varepsilon)^2}{1-\varepsilon}\max\{\Vert y\Vert,\vert\lambda\vert\}.
\end{split}\]
This finishes the proof.
\end{proof}

Similar arguments provide versions for $\SQ{\kappa}$ and $<\kappa$-octahedral spaces and for the failure of $(-1)$-BCP$_{<\kappa}$. We omit the proof of the following theorem to avoid repetitions.

\begin{theorem}\label{theo:caraasqlargosaiidealesbis}
Let $X$ be a Banach space and $Y\subseteq X$ a subspace. Then:
\begin{enumerate}
    \item $X$ is $\SQ{\kappa}$ if, and only if, $X\times \{0\}$ is a $\kappa$  isometric ideal in $X\oplus_\infty\mathbb R$.
    \item $X$ is $<\kappa$-octahedral if, and only if, $X\times \{0\}$ is a $\kappa$ almost isometric ideal in $X\oplus_1\mathbb R$.
    \item $X$ fails the $(-1)$-BCP$_{<\kappa}$ if, and only if, $X\times \{0\}$ is a $\kappa$ isometric ideal in $X\oplus_1\mathbb R$.
\end{enumerate}
\end{theorem}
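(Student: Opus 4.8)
The plan is to prove Theorem~\ref{theo:caraasqlargosaiidealesbis} by adapting the scheme of Theorem~\ref{theo:caraasqlargosaiideales}, replacing the $\oplus_\infty$ geometry with $\oplus_1$ geometry and replacing the almost-square estimate with the octahedral one. For part (1), the $\SQ{\kappa}$ case, I would run verbatim the argument of Theorem~\ref{theo:caraasqlargosaiideales} but with $\varepsilon=0$ throughout: in $1\Rightarrow2$ one finds $x\in S_X$ with $\Vert z+\lambda x\Vert\le\max\{\Vert z\Vert,\vert\lambda\vert\}$ for $(z,0)$ in $Y=P(E)$, defines $T(z,\lambda):=(z+\lambda x,0)$, and checks it is an exact isometry fixing $E\cap(X\times\{0\})$; the reverse direction is identical with the perturbation terms dropped. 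The only mild point is that with $\varepsilon=0$ one gets directly $\Vert (z+\lambda x,0)\Vert=\max\{\Vert z\Vert,\vert\lambda\vert\}=\Vert(z,\lambda)\Vert$ using $\Vert T(0,1)\Vert=1$, so Lemma~\ref{lemma:geompropilargas} is not even needed.

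For parts (2) and (3) the geometry switches to the sum norm, so the key computation changes. In $1\Rightarrow2$ of part (2), given $E\subseteq X\oplus_1\R$ with $\dens(E)<\kappa$, I would set $Y:=P(E)\subseteq X\times\{0\}$ and use $<\kappa$-octahedrality of $X$ to produce $x\in S_X$ with $\Vert y+\lambda x\Vert\ge(1-\varepsilon)(\Vert y\Vert+\vert\lambda\vert)$ for all $y$ with $(y,0)\in Y$ and all $\lambda$. Defining $T(z,\lambda):=(z+\lambda x,0)$, I would verify the lower bound
\[
\Vert T(z,\lambda)\Vert=\Vert z+\lambda x\Vert\ge(1-\varepsilon)(\Vert z\Vert+\vert\lambda\vert)=(1-\varepsilon)\Vert(z,\lambda)\Vert
\]
and the trivial upper bound $\Vert z+\lambda x\Vert\le\Vert z\Vert+\vert\lambda\vert=\Vert(z,\lambda)\Vert$ coming from the triangle inequality and $\Vert x\Vert=1$; together these make $T$ an $\varepsilon$-isometry fixing $E\cap(X\times\{0\})$, giving the almost isometric ideal. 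For $2\Rightarrow1$, I would take $Y\subseteq X\times\{0\}$ with $\dens(Y)<\kappa$, form $E:=\lspan\{Y\cup\{(0,1)\}\}\subseteq X\oplus_1\R$, obtain the $\varepsilon$-isometry $T:E\to X\times\{0\}$, set $x:=T(0,1)/\Vert T(0,1)\Vert\in S_X$, and use the lower estimate on $\Vert T(y,\lambda)\Vert=\Vert(y,0)+\lambda T(0,1)\Vert$ to recover $\Vert y+\lambda x\Vert\ge(1-\varepsilon)'(\Vert y\Vert+\vert\lambda\vert)$ after absorbing the factor $\Vert T(0,1)\Vert\in[1-\varepsilon,1+\varepsilon]$; since $\varepsilon$ is arbitrary this yields $<\kappa$-octahedrality. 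Part (3) is the $\varepsilon=0$ rigid version of (2), where the isometric ideal gives equalities $\Vert y+\lambda x\Vert=\Vert y\Vert+\vert\lambda\vert$ directly.

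The main obstacle is bookkeeping rather than conceptual: one must confirm that under the $\oplus_1$ norm the relevant one-sided estimate (a lower bound of octahedral type, rather than the two-sided square bound) is exactly what is transported across the ideal map, and that the normalising constant $\Vert T(0,1)\Vert$ can be absorbed into an arbitrarily small perturbation of $\varepsilon$. Concretely, the delicate step is the reverse direction $2\Rightarrow1$ in part (2), where the inequality chain must be arranged so that the final constant tends to $1$ as $\varepsilon\to0$; this is the analogue of the terminal estimate $\tfrac{(1+\varepsilon)^2}{1-\varepsilon}$ appearing in Theorem~\ref{theo:caraasqlargosaiideales}, and I expect an entirely parallel manipulation to close the argument. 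Because all of these computations mirror the already-given proof with the roles of $\max$ and $+$ interchanged, I would simply indicate the single changed estimate in each part and otherwise refer to Theorem~\ref{theo:caraasqlargosaiideales}, which is precisely why the authors state they omit the proof to avoid repetitions.
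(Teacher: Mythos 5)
Your proposal is correct and is essentially the proof the paper has in mind: the authors omit the argument precisely because it is the routine adaptation of Theorem~\ref{theo:caraasqlargosaiideales} that you describe, with the $\oplus_1$ triangle inequality supplying the upper bound, octahedrality (resp.\ the failure of $(-1)$-BCP$_{<\kappa}$) the lower one, and the factor $\Vert T(0,1)\Vert\in[1-\varepsilon,1+\varepsilon]$ absorbed as $\varepsilon\to 0$ (with no normalisation needed at all in the rigid cases (1) and (3)). One small imprecision: in part (1), direction $1\Rightarrow 2$, the exact-isometry lower bound $\Vert z+\lambda x\Vert\ge\max\{\Vert z\Vert,\vert\lambda\vert\}$ does not follow \emph{directly} from the squareness upper bound and $\Vert x\Vert=1$; you still need the symmetric estimate $2\max\{\Vert z\Vert,\vert\lambda\vert\}\le\Vert z+\lambda x\Vert+\Vert z-\lambda x\Vert\le\Vert z+\lambda x\Vert+\max\{\Vert z\Vert,\vert\lambda\vert\}$ (equivalently, Lemma~\ref{lemma:geompropilargas} applied for every $\varepsilon>0$ and then letting $\varepsilon\to0$), so the lemma, or its $\varepsilon=0$ limiting case, is in fact still being used.
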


Observe that by taking $\kappa = \aleph_0$ in the previous theorems we obtain the corresponding results for classical octahedrality and almost squareness, which is probably well known by specialist, but for which we have not found any explicit reference. We leave here the statement.

\begin{corollary}\label{coro:aiiealoctaclasisuma}
Let $X$ be a Banach space. Then:
\begin{enumerate}
    \item $X$ is octahedral if, and only if, $X\times\{0\}$ is an almost isometric ideal in $X\oplus_1 \mathbb R$.
    \item $X$ is ASQ if, and only if, $X\times\{0\}$ is an almost isometric ideal in $X\oplus_\infty \mathbb R$.
\end{enumerate}
\end{corollary}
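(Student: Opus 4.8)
The plan is to derive both equivalences as the instance $\kappa=\aleph_0$ of the transfinite characterisations already at hand, so that no new geometric work is required; the entire content is an identification of the notions involved at the smallest infinite cardinal. First I would record that, with the density convention fixed in Section~\ref{section:preliminaries}, a subspace $E$ satisfies $\dens(E)<\aleph_0$ precisely when $E$ is finite dimensional, since the paper stipulates $\dens(E)=\dim(E)$ for finite dimensional $E$. Reading Definition~\ref{defi:asqyohlargos} with this in mind, the property $\ASQ{\aleph_0}$ is exactly the classical ASQ property and $<\aleph_0$-octahedrality is exactly classical octahedrality, because in both cases the quantifier over subspaces ``$\dens(Y)<\kappa$'' becomes ``$Y$ finite dimensional''. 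Likewise, as noted right after Definition~\ref{defi:transaiideal}, an $\aleph_0$ (almost) isometric ideal is nothing but a classical (almost) isometric ideal, since there too the test subspaces $E$ of the ambient space are exactly the finite dimensional ones.

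With these dictionaries in place, part~(2) is the statement of Theorem~\ref{theo:caraasqlargosaiideales} specialised to $\kappa=\aleph_0$: that theorem asserts that $X$ is $\ASQ{\kappa}$ if and only if $X\times\{0\}$ is a $\kappa$ almost isometric ideal in $X\oplus_\infty\mathbb R$, which at $\kappa=\aleph_0$ reads precisely as ``$X$ is ASQ if and only if $X\times\{0\}$ is an almost isometric ideal in $X\oplus_\infty\mathbb R$''. Similarly, part~(1) is Theorem~\ref{theo:caraasqlargosaiidealesbis}(2) at $\kappa=\aleph_0$, translating ``$<\kappa$-octahedral'' into ``octahedral'' and ``$\kappa$ almost isometric ideal'' into ``almost isometric ideal'' through the same identifications.

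I expect no genuine obstacle here, as the argument is purely a matter of unwinding definitions; the only point requiring a moment of care is the first one, namely that $\dens(E)<\aleph_0$ is equivalent to finite dimensionality, which is guaranteed by the paper's density convention. The substantive work has already been carried out in Theorems~\ref{theo:caraasqlargosaiideales} and~\ref{theo:caraasqlargosaiidealesbis}, so this corollary follows by direct specialisation, which is exactly why the authors state it without a separate proof.
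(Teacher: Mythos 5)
Your proposal is correct and coincides with the paper's own argument: the authors obtain this corollary precisely by specialising Theorems~\ref{theo:caraasqlargosaiideales} and~\ref{theo:caraasqlargosaiidealesbis} to $\kappa=\aleph_0$, relying on the same identifications you spell out (that $\dens(E)<\aleph_0$ means $E$ is finite dimensional under the paper's density convention, that $\aleph_0$ almost isometric ideals are the classical ones, and that $\mathrm{ASQ}_{<\aleph_0}$ and $<\aleph_0$-octahedrality are the classical notions). Your write-up merely makes explicit the definitional unwinding that the paper leaves implicit.
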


We end the section by observing that an easy application of Theorems \ref{theo:caraasqlargosaiideales}, \ref{theo:caraasqlargosaiidealesbis} and \ref{AUDchar} allows us to prove that spaces of almost universal disposition enjoy transfinite versions of octahedrality and almost squareness. Similarly, spaces of universal disposition are square and fail $(-1)$-BCP. Observe that the version for (A)SQ reproves \cite[Example 2.6]{acllr23}. However, even though the result is probably well known for specialists, we have not found an explicit reference for the result about transfinite octahedrality and the failure of $(-1)$-BCP.

\begin{corollary}\label{cor:propUDoctaasq}
Let $X$ be a Banach space and $\kappa$ an infinite cardinal. Then:
\begin{enumerate}
    \item If $X$ is an $\text{\emph{AUD}}_{<\kappa}$ space, then $X$ is ASQ$_{<\kappa}$ and it is $<\kappa$-octahedral.
    \item If $X$ is a $\text{\emph{UD}}_{<\kappa}$ space, then $X$ is SQ$_{<\kappa}$ and fails $(-1)$-BCP$_{<\kappa}$.
\end{enumerate}
\end{corollary}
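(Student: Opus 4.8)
The plan is to observe that this corollary is a direct chaining of the characterisation Theorem~\ref{AUDchar} with the sum-space characterisations in Theorems~\ref{theo:caraasqlargosaiideales} and \ref{theo:caraasqlargosaiidealesbis}. The crucial point is that both $X\oplus_\infty\mathbb R$ and $X\oplus_1\mathbb R$ are Banach spaces that contain (an isometric copy of) $X$, namely $X\times\{0\}$, so the hypothesis that $X$ is a space of (almost) universal disposition immediately forces $X\times\{0\}$ to be a $\kappa$ (almost) isometric ideal in each of these two sum spaces.

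First I would treat part (1). Assuming $X$ is $\text{AUD}_{<\kappa}$, Theorem~\ref{AUDchar} yields that $X$ is a $\kappa$ almost isometric ideal in \emph{every} Banach space containing it. Since $X\times\{0\}$ is an isometric copy of $X$ sitting inside $X\oplus_\infty\mathbb R$, and since being a $\kappa$ almost isometric ideal is preserved under isometries (as used already in the proof of Theorem~\ref{AUDchar}), we conclude that $X\times\{0\}$ is a $\kappa$ almost isometric ideal in $X\oplus_\infty\mathbb R$. By Theorem~\ref{theo:caraasqlargosaiideales} this is exactly the statement that $X$ is $\ASQ{\kappa}$. Applying the same reasoning to the space $X\oplus_1\mathbb R$, we get that $X\times\{0\}$ is a $\kappa$ almost isometric ideal in $X\oplus_1\mathbb R$, and then Theorem~\ref{theo:caraasqlargosaiidealesbis}(2) gives that $X$ is $<\kappa$-octahedral.

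Part (2) is handled identically, replacing ``almost isometric'' by ``isometric'' throughout. Assuming $X$ is $\text{UD}_{<\kappa}$, Theorem~\ref{AUDchar} (in its isometric form) shows that $X\times\{0\}$ is a $\kappa$ isometric ideal in both $X\oplus_\infty\mathbb R$ and $X\oplus_1\mathbb R$. Feeding these into Theorem~\ref{theo:caraasqlargosaiidealesbis}(1) and (3) respectively yields that $X$ is $\SQ{\kappa}$ and that $X$ fails the $(-1)$-BCP$_{<\kappa}$.

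There is essentially no genuine obstacle here: the corollary is a formal consequence of the three cited theorems. The only point requiring a word of care is the identification $X\equiv X\times\{0\}$ together with the transfer of the ``$\kappa$ (almost) isometric ideal'' property along this isometry, but this is routine and already implicitly used in the excerpt. One should just be attentive to invoke the correct side (isometric versus almost isometric, and $\oplus_\infty$ versus $\oplus_1$) of each characterisation so that the four conclusions are matched to their proper hypotheses.
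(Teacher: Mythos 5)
Your proof is correct and is essentially identical to the paper's argument: the paper also derives the corollary as a direct chaining of Theorem~\ref{AUDchar} (applied to the containing spaces $X\oplus_\infty\mathbb R$ and $X\oplus_1\mathbb R$, using the identification $X\equiv X\times\{0\}$) with the characterisations in Theorems~\ref{theo:caraasqlargosaiideales} and \ref{theo:caraasqlargosaiidealesbis}. Your attention to matching the isometric versus almost isometric cases with the $\oplus_\infty$ versus $\oplus_1$ sums is exactly the bookkeeping the paper's (implicit) proof requires.
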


Let us conclude with an example which distinguish transfinite versions of almost isometric ideals and isometric ideals.

\begin{example}\label{ex:distintai}
Let $\kappa$ be an uncountable cardinal, $X:=c_0(\mathbb N\setminus\{1\}, \ell_n(\kappa))$ and $Y:=X\oplus_\infty\mathbb R$. By \cite[Example 3.1]{acllr23} we know that $X$ is an ASQ$_{<\kappa}$ space, but $X$ does not contain any isomorphic copy of $c_0(\aleph_1)$ and, in particular, $X$ cannot be equivalently renormed to be SQ$_{<\aleph_1}$. According to Theorems~\ref{theo:caraasqlargosaiideales} and \ref{theo:caraasqlargosaiidealesbis} we infer that $X\times\{0\}$ is a $\kappa$ almost isometric ideal in $Y$ but $X\times\{0\}$ fails to be even an $\aleph_1$ isometric ideal in $Y$.
\end{example}

\section{A revision of classical results}\label{sect:classicalresults}

Concerning the theory of ideals or almost isometric ideals we have two important theorems which show that these notions are abundant. On the one hand, the Principle of Local Reflexivity can be established as the fact that any Banach space is an almost isometric ideal in its bidual. On the other hand, a recent result by T. A. Abrahamsen \cite[Remark 2.3]{abrahamsen15} yields in particular that given any Banach space $X$ and any subspace $Y\subseteq X$ there exists an almost isometric ideal $Z$ in $X$ with $\dens(Z)=\dens(Y)$ and $Y\subseteq Z$ (a version for the notion of ideal is a classical result by B. Sims and D. Yost \cite{simyos}). The main aim of this section is to make an intensive analysis of possible extensions of the above results to the transfinite generalisations of ideals and of (almost) isometric ideals that we have introduced in this paper. 

\subsection{The Principle of Local Reflexivity}\label{subsect:principlelocal}

In this section we pursue to prove that no generalisation of the principle of local reflexivity is possible out of the case of almost isometric ideals.

As said, the essence of this principle is that any Banach space is an almost isometric ideal in its bidual and the next example shows that it will no longer be true for $\kappa$ almost isometric ideals if $\kappa$ is uncountable:

\begin{example}
If $\kappa$ is an uncountable cardinal, then the space $X= c_0(\kappa)$ cannot be a $\kappa$ almost isometric ideal in its bidual. Indeed, if that was the case, as $X^{**}$ contains an isometric copy of $\ell_1$, we would have an isomorphism (into its image) $P: \ell_1 \ra c_0(\kappa)$ which is not possible as $c_0(\kappa)$ is Asplund while $\ell_1$ is not.
\end{example}

Furthermore, the next example shows that any version of the principle of local reflexivity is false if we consider $\kappa$ (isometric) ideals even in the case where $\kappa = \aleph_0$.

\begin{example}\label{exam:plrrigidofalsofinito}
Consider $X=c$ the space of scalar convergent sequences and select any Banach space $Y$ and any finite dimensional Banach space such that the set of extreme points of $B_{Y^*}$ is not isolated. By \cite[Theorem 7.6]{linds64} there exists an operator $T:Y\longrightarrow X$ and a Banach space $Z\supset Y$ such that $Z/Y$ is $1$-dimensional and such that $T$ does not admit any norm-preserving extension to $Z$.

Since $X^{**}$ is an injective Banach space (for instance by \cite[Theorem 6.1]{linds64}), we claim that $X$ is not an $\aleph_0$ ideal in $X^{**}$. Indeed, if we assumed by contradiction that $X$ is an $\aleph_0$ ideal in $X^{**}$, consider $i:X\longrightarrow X^{**}$ the inclusion operator. By the injectivity we would have an operator $\hat T:Z\longrightarrow X^{**}$ such that $\Vert \hat T\Vert=\Vert i\circ T\Vert=\Vert T\Vert$ (since $i$ is an isometry) and such that $\hat T(y)=(i\circ T)(y)$ for any $y \in Y$. Since $Z$ is a finite dimensional space we would get that $\hat T(Z)\subseteq X^{**}$ is finite dimensional. By the assumption that $X$ is an $\aleph_0$ ideal in $X^{**}$ we could find an operator $P:\hat T(Z)\longrightarrow X$ with $\Vert P\Vert=1$ and $P(i(x))=x$ for every $x\in X$ such that $i(x)\in \hat T(Z)$. It follows that $P\circ\hat T:Z\longrightarrow X$ is an operator with $\Vert P\circ\hat T\Vert\leq \Vert T\Vert$ and such that, given $y\in Y$, we have
$$P(\hat T(y))=P(i(T(y)))=T(y),$$
so $P\circ \hat T$ is a norm preserving extension of the operator $T$, which yields the contradiction that we were looking for.
\end{example}

\subsection{Abrahamsen and Sims-Yost theorem}\label{subsect:simyostfalsos}

We will devote this subsection to an analysis of possible transfinite versions of Theorems~\ref{theo:simyost} and \ref{theo:abrahamsenai}. To be more precise, we will analyse the following question:
\begin{center}
Given $Y\subseteq X$ two Banach spaces, is there any $\kappa$ (almost isometric) ideal $Z$ in $X$ with $Y\subseteq Z\subseteq X$ and $\dens(Z)=\dens(Y)$?
\end{center}

Let us begin with the case $\kappa=\aleph_0$  for isometric ideals. The answer in this case, is negative.

\begin{example}\label{exam:noabrahamsenrigido}
Let $X=L_1([0,1])$. Observe that $X^{**}=L_\infty([0,1])^*$ fails the $(-1)$-BCP$_{<\aleph_0}$ since $X^{**}$ is the dual of a Banach space with the Daugavet property \cite[Lemma 5.1]{cll23}. It follows that there is no separable isometric ideal $Z$ in $X$ such that $X\subseteq Z\subseteq X^{**}$. Indeed, if $Z$ is any isometric ideal in $X^{**}$ it follows that given $x\in S_Z$, then there would be $z \in S_Z$ such that $\Vert x\pm z\Vert=2$ (by the failure of $X^{**}$ of the $(-1)$-BCP$_{<\aleph_0}$ and Theorem \ref{bajaocta} (2)). This implies that $x$ can not be a point of G\^ateaux differentiability. Since $x$ was arbitrary we conclude that the norm of $Z$ does not have any point of G\^ateaux differentiability and, consequently, $Z$ can not be separable (c.f. e.g. \cite[Theorem 8.14]{fab}). 
\end{example}

For the transfinite versions, we aim to find consistent counterexamples for any successor cardinal. In order to do so, we will analyse the case of $\kappa$ ideals in $\ell_\infty(\kappa)$ containing $c_0(\kappa)$. In that line we get the following results:

\begin{proposition}\label{prop:kappaidealc0}
Let $X$ be a $\kappa$ ideal in $\ell_\infty(\Gamma)$ which contains $c_0(\Gamma)$. Then, for every $f \in \ell_\infty(\kappa, \Gamma)$ there exists a function $g \in X$ such that $g(\gamma) = f(\gamma)$ for every $\gamma \in \sop(f)$.
\end{proposition}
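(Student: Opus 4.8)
The plan is to obtain $g$ from a single application of the $\kappa$ ideal property to a subspace that carries both $f$ and all the coordinate vectors it meets. Write $S:=\sop(f)$, so that $|S|<\kappa$, and set
$$E:=\clspan\big(\{f\}\cup\{e_\gamma:\gamma\in S\}\big)\subseteq \linf{\Gamma}.$$
A spanning set of $E$ has cardinality $|S|<\kappa$ (adjoining the single vector $f$ does not change this, $\kappa$ being infinite), so $\dens(E)<\kappa$. The hypothesis then provides a linear operator $P\colon E\to X$ with $\norm{P}=1$ fixing every point of $E\cap X$. Since $c_0(\Gamma)\subseteq X$, each $e_\gamma$ with $\gamma\in S$ lies in $E\cap X$, whence $P(e_\gamma)=e_\gamma$. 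I define $g:=P(f)\in X$ and claim that $g(\gamma)=f(\gamma)$ for every $\gamma\in S$. Note that this produces a \emph{single} $g$ working for all coordinates at once, which is precisely why all the $e_\gamma$ must be packed into one subspace $E$ rather than treated one at a time.

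The main obstacle is that the naive estimate is too weak: one would like to subtract coordinates and bound $|g(\gamma)-f(\gamma)|$ using $\norm{P}\le 1$, but for a finite $F\subseteq S$ this only gives $|g(\gamma)-f(\gamma)|\le\sup_{\gamma'\in S\setminus F}|f(\gamma')|$, and since $f$ need not belong to $c_0(\Gamma)$ this residual cannot be driven to $0$. The fix is to push in the opposite direction, using large multiples so that a single coordinate dominates the sup-norm. Fix $\gamma_0\in S$ and let $M:=\sup_{\gamma\neq\gamma_0}|f(\gamma)|<\infty$. For $t\in\mathbb R$, linearity gives $P(f+t\,e_{\gamma_0})=g+t\,e_{\gamma_0}$, and $\norm{P}\le 1$ yields $\norm{g+t\,e_{\gamma_0}}_\infty\le\norm{f+t\,e_{\gamma_0}}_\infty=\max\{|f(\gamma_0)+t|,M\}$. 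Evaluating the left-hand side at $\gamma_0$ gives $|g(\gamma_0)+t|\le\max\{|f(\gamma_0)+t|,M\}$. For $t$ large and positive both sides shed their maxima and absolute values, so this reads $g(\gamma_0)+t\le f(\gamma_0)+t$, i.e. $g(\gamma_0)\le f(\gamma_0)$; running the same computation with $f-t\,e_{\gamma_0}$ for large positive $t$ gives $t-g(\gamma_0)\le t-f(\gamma_0)$, i.e. $g(\gamma_0)\ge f(\gamma_0)$. Hence $g(\gamma_0)=f(\gamma_0)$.

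Since $\gamma_0\in S$ was arbitrary, $g$ agrees with $f$ on $\sop(f)$, which is the assertion. The only points requiring care are the cardinality bookkeeping for $\dens(E)<\kappa$ and the verification that, for $t$ sufficiently large, the term $|f(\gamma_0)\pm t|$ genuinely dominates $M$ so that the maxima resolve as claimed; both are routine.
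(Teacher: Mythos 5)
Your proposal is correct and follows essentially the same route as the paper's proof: one application of the $\kappa$ ideal property to $E=\clspan\bigl(\{f\}\cup\{e_\gamma:\gamma\in\sop(f)\}\bigr)$, setting $g:=P(f)$, and then exploiting $\norm{P}\le 1$ together with $P(e_{\gamma_0})=e_{\gamma_0}$ via large multiples of the coordinate vector so that the $\gamma_0$ coordinate dominates the sup-norm. The only (cosmetic) difference is that the paper fixes the single multiple $3\norm{f}e_{\gamma_0}$ and computes both norms exactly, whereas you let $t\to\infty$ and only lower-bound the left-hand norm by evaluation at $\gamma_0$, which slightly streamlines the bookkeeping.
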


\begin{proof}
Define the space
    $$E= \clspan \left(\{e_i: i \in \sop(f)\} \cup \{f\} \right)$$
where $e_i$, $i \in \Gamma$, are the canonical elements of $\linf{\Gamma}$. We have then  that $\dens(E)$ equals the cardinality of $\sop(f)$ so, since $f \in \linf{\kappa, \Gamma}$, we conclude that it will be strictly smaller than $\kappa$.

Since $X$ is a $\kappa$ ideal we get a bounded operator $P: E \ra X$ of norm equal to $1$ and acting as the identity operator on $E \cap X$. In particular, if $i \in \sop(f)$ we get $e_i \in E \cap c_0(\Gamma) \subset E \cap X$, so $P(e_i) = e_i$. Define $g = P(f)$ and let us show that $g$ satisfies our requirements.

Let $i \in \sop(f)$ and observe that
    \begin{align*}
        \norm{3\norm{f} e_i \pm f} & = \sup_{\gamma \in \Gamma} \abs{3\norm{f}e_i(\gamma) \pm f(\gamma)} = \max\{\abs{3 \norm{f} \pm f(i)}, \sup_{\gamma \in \Gamma \setminus\{i\}} \abs{f(\gamma)}\} \\
         & \leq \max\{\abs{3\norm{f} \pm f(i)}, \norm{f}\} = \abs{3\norm{f} \pm f(i)} = 3\norm{f} \pm f(i).
    \end{align*}
Notice also that
    \begin{align*}
        \norm{3\norm{f} e_i \pm f} = 3\norm{f} \pm f(i).
    \end{align*}
On the other hand, since $\norm{g} = \norm{P(f)} \leq \norm{P} \norm{f} = \norm{f}$, the same argument allows to conclude that
    \begin{align*}
        \norm{3\norm{f} e_i \pm g} = 3\norm{f} \pm g(i).
    \end{align*}
Moreover, we have that
    \begin{align*}
        3\norm{f} \pm g(i) & =\norm{3\norm{f} e_i \pm g} = \norm{3 \norm{f}P(e_i) \pm P(f)} \\
        & = \norm{P(3\norm{f}e_i \pm f)} \leq \norm{3\norm{f} e_i \pm f}  = 3\norm{f} \pm f(i)
    \end{align*}
from where the equality $f(i) = g(i)$ holds, as desired.
\end{proof}

\begin{corollary}\label{contrasimsyost}
Let $\Gamma$ be any subset with $|\Gamma| \geq \kappa$ and $X$ be a $\kappa^+$ ideal in $\ell_\infty(\Gamma)$ which contains $c_0(\Gamma)$. Then, $\dens(X) \geq 2^{\kappa}$.
\end{corollary}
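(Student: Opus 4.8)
The plan is to apply Proposition~\ref{prop:kappaidealc0} with the cardinal $\kappa^+$ playing the role of its ``$\kappa$''. Since $X$ is a $\kappa^+$ ideal in $\ell_\infty(\Gamma)$ containing $c_0(\Gamma)$, the proposition guarantees that for every $f \in \ell_\infty(\kappa^+,\Gamma)$ (that is, every $f$ with $|\supp(f)| \le \kappa$, because membership means $|\supp(f)| < \kappa^+$) there is some $g \in X$ with $g(\gamma) = f(\gamma)$ for all $\gamma \in \supp(f)$. The strategy is then to manufacture a family of $2^\kappa$ elements of $X$ that are pairwise $2$-separated; the classical fact that an infinite dimensional Banach space possessing a $2$-separated subset of cardinality $\lambda$ has density at least $\lambda$ will yield $\dens(X) \ge 2^\kappa$. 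Here $X$ is infinite dimensional since it contains $c_0(\Gamma)$ with $|\Gamma| \ge \kappa$, so the two notions of density (least cardinality of a dense subset and least cardinality of a set spanning a dense subspace) coincide.

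To build the separated family, I would first fix a set $A \subseteq \Gamma$ with $|A| = \kappa$, which exists because $|\Gamma| \ge \kappa$. For each subset $S \subseteq A$ I define $f_S \in \ell_\infty(\Gamma)$ by $f_S(\gamma) = 1$ if $\gamma \in S$, $f_S(\gamma) = -1$ if $\gamma \in A \setminus S$, and $f_S(\gamma) = 0$ otherwise. Then $\supp(f_S) = A$, so $|\supp(f_S)| = \kappa < \kappa^+$ and $f_S \in \ell_\infty(\kappa^+,\Gamma)$. Applying the proposition I obtain, for each $S$, an element $g_S \in X$ with $g_S(\gamma) = f_S(\gamma)$ for every $\gamma \in A$.

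Next I would verify the $2$-separation. Given distinct $S, S' \subseteq A$, choose $\gamma_0 \in S \triangle S'$; then $f_S(\gamma_0)$ and $f_{S'}(\gamma_0)$ equal $+1$ and $-1$ in some order, and since $\gamma_0 \in A$ both values are reproduced by $g_S$ and $g_{S'}$. Hence $\norm{g_S - g_{S'}} \ge |g_S(\gamma_0) - g_{S'}(\gamma_0)| = 2$, so $\{g_S : S \subseteq A\}$ is a $2$-separated subset of $X$ of cardinality $2^{|A|} = 2^\kappa$. Finally, any dense subset $D$ of $X$ must contain, for each $S$, a point within distance $1$ of $g_S$, and by $2$-separation these chosen points are pairwise distinct; therefore $|D| \ge 2^\kappa$, which gives $\dens(X) \ge 2^\kappa$.

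The step I would be most careful about is the choice of $\pm 1$ values supported on the whole of $A$, rather than the more obvious $0/1$ indicator of $S$. The naive indicator family gives no distance control, because the proposition pins down $g_S$ only on $\supp(f_S) = S$, leaving its values off $S$ completely free, so two such functions could agree to within a tiny error everywhere we can control them. The symmetric $\pm 1$ construction repairs this precisely because every index of $A$ lies in the support of \emph{every} $f_S$, so at the distinguishing coordinate $\gamma_0$ both $g_S(\gamma_0)$ and $g_{S'}(\gamma_0)$ are determined, forcing the gap of $2$.
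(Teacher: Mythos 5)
Your proposal is correct and follows essentially the same route as the paper: the paper likewise fixes $\Lambda\subseteq\Gamma$ with $|\Lambda|=\kappa$, considers exactly the family of functions taking values $\pm 1$ on $\Lambda$ and $0$ elsewhere (your $f_S$, indexed by $S\subseteq\Lambda$), applies Proposition~\ref{prop:kappaidealc0} to pin down elements of $X$ on $\Lambda$, and concludes from the resulting $2$-separated family of cardinality $2^\kappa$. The only cosmetic difference is that you spell out the density-versus-separation argument (where one should take approximants at distance strictly less than $1$ to get a clean contradiction), which the paper leaves implicit.
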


\begin{proof}
Consider any proper subset $\Lambda \subset \Gamma$ with $|\Lambda| = \kappa$ and define the set  
$$A:= \left\{f\in \ell_\infty(\Gamma): \forall i\in \Lambda \,f(i)\in \{-1,1\} \land \forall i \in \Gamma \setminus \Lambda\, f(i) = 0\right\}.$$
Then $A \subset \ell_\infty(\kappa^+,\Gamma)$ and by Proposition~\ref{prop:kappaidealc0} we get that, given $f\in A$, there exists some $x_f\in X$ such that $x_f(i)=f(i)$ for every $i\in \supp(f)$. Observe that clearly the set $\{x_f: f\in A\}$ is such that $\Vert x_f-x_g\Vert \geq \Vert f-g\Vert = 2$ holds for every $f,g\in A, f\neq g$, so in particular $\dens(X)\geq\vert A\vert=2^{\kappa}$ as wanted. 
\end{proof}

\begin{remark}\label{rema:ejelinfikappaaiiideal}
An inspection in the proof of Proposition~\ref{prop:kappaidealc0} allows to obtain the following version for almost isometric ideals: Let $X$ be a $\kappa$ almost isometric ideal in $\ell_\infty(\Gamma)$ which contains $c_0(\Gamma)$. Then, for every $f \in \ell_\infty(\kappa, \Gamma)$ and every $\varepsilon>0$, there exists a function $g \in X$ such that $\vert g(\gamma)-f(\gamma)\vert<\varepsilon$ for every $\gamma \in \sop(f)$. Furthermore, is $X$ is a $\kappa^+$ almost isometric ideal, then $\dens(X) \geq 2^{\kappa}$.
\end{remark}

This last result proves that neither Theorem \ref{theo:simyost} with $\eps = 0$ nor Theorem~\ref{theo:abrahamsenai} can be proven in ZFC, in most cases, for the transfinite setting:

\begin{example}\label{exam:fallokappaideal}
Let $X = \ell_\infty(\kappa^+)$ and $Y = c_0(\kappa^+)$ for some successor cardinal $\kappa^+$. If there exists a $\kappa^+$ (almost isometric) ideal $Z$ such that $Y \subset Z \subset X$, then Corollary \ref{contrasimsyost} (or Remark \ref{rema:ejelinfikappaaiiideal}) implies that $\dens(Z) \geq 2^\kappa$ which is consistent to be greater than $\kappa^+ = \dens(Y)$.
\end{example}

\section{Almost isometric ideals in tensor product and ultrapower spaces}\label{section:aiidealestensorultrapower}

In this section we will study some results about almost isometric ideals in injective and projective tensor products. The reason for considering these results in a separate section is that we will need several perturbation arguments which are typical of finite dimensional Banach spaces and do not fit with the transfinite spirit of the previous sections. 

The following, which is the $\aleph_0$ version of Theorem \ref{theo:injetensortransai}, is the main result of this section.

\begin{theorem}\label{theorem:aiidealinyect}
Let $X,Y$ be Banach spaces and let $Z\subseteq X$ and $W\subseteq Y$ be almost isometric ideals. Then $Z\injtensor W$ is an almost isometric ideal in $X\injtensor Y$.
\end{theorem}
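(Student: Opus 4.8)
The plan is to argue directly with finite-dimensional data. The strategy of Theorem~\ref{tensor} (pass to intermediate ideals $U\subseteq X$, $V\subseteq Y$ via Sims--Yost/Abrahamsen and tensor the resulting maps) is \emph{not} available here: the finite-tensor representations of the elements of a finite-dimensional subspace of $X\injtensor Y$ already force such $U,V$ to be countably infinite-dimensional, whereas the defining property of an almost isometric ideal only supplies $\eps$-isometries on \emph{finite}-dimensional subspaces. Hence a perturbation argument in finite dimensions is needed.

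Fix a finite-dimensional $G\subseteq X\injtensor Y$ and $\eps>0$, and set $G_0:=G\cap(Z\injtensor W)$. First I would record the data depending only on $G$: a linear projection $\pi\colon G\to G_0$ with $\norm{\pi}=:C$, a basis $h_1,\dots,h_m$ of $G_0$ completed to a basis $h_1,\dots,h_m,g_{m+1},\dots,g_n$ of $G$, and a bound $B$ for the associated coordinate functionals. Then, for a tolerance $\eta>0$ to be fixed at the end, I approximate each basis vector by a finite tensor: choose $\widetilde h_j=\sum_i z_i^j\otimes w_i^j\in Z\otimes W$ with $\norm{h_j-\widetilde h_j}<\eta$ (possible since $h_j\in Z\injtensor W$) and $\widetilde g_k=\sum_i x_i^k\otimes y_i^k\in X\otimes Y$ with $\norm{g_k-\widetilde g_k}<\eta$. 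Put $E_0:=\lspan(\{z_i^j\}\cup\{x_i^k\})\subseteq X$ and $F_0:=\lspan(\{w_i^j\}\cup\{y_i^k\})\subseteq Y$, both finite-dimensional, and note $z_i^j\in E_0\cap Z$, $w_i^j\in F_0\cap W$.

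Now I invoke the hypotheses: for a tolerance $\delta>0$ there are $\delta$-isometries $\phi\colon E_0\to Z$ and $\psi\colon F_0\to W$ fixing $E_0\cap Z$ and $F_0\cap W$ respectively. The key tensorial fact is that $\Phi:=\phi\injtensor\psi\colon E_0\injtensor F_0\to Z\injtensor W$ is then an $\eps_1$-isometry with $\eps_1\to 0$ as $\delta\to0$; this follows from the description $\norm{u}_\eps=\sup\{\abs{(\xi\otimes\eta)(u)}:\xi\in B_{E_0^*},\,\eta\in B_{F_0^*}\}$ together with the Hahn--Banach estimate $\phi^*(B_{Z^*})\supseteq(1-\delta)B_{E_0^*}$ (and its analogue for $\psi$), and it is exactly where ``the injective tensor product respects (near-)isometries'' is used. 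Since $\injtensor$ respects subspaces, both $E_0\injtensor F_0$ and $Z\injtensor W$ sit isometrically inside $X\injtensor Y$, and by construction $\Phi(\widetilde h_j)=\sum_i\phi(z_i^j)\otimes\psi(w_i^j)=\widetilde h_j$. Next I define $S\colon G\to E_0\injtensor F_0$ by $S(h_j):=\widetilde h_j$ and $S(g_k):=\widetilde g_k$; the coordinate bound gives $\norm{S(g)-g}\le nB\eta\,\norm{g}$, so $S$ is a small perturbation of the inclusion, and the composition $T_0:=\Phi\circ S\colon G\to Z\injtensor W$ is an $\eps_2$-isometry with $T_0(h_j)=\widetilde h_j$, whence $\norm{T_0(h)-h}\le nB\eta\,\norm{h}$ for $h\in G_0$.

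The remaining, and genuinely delicate, point is that the definition demands that $G\cap(Z\injtensor W)$ be fixed \emph{exactly}, while its elements only admit approximate finite-tensor representations, so $T_0$ fixes $G_0$ merely up to $\eta$. I would remove this defect with the correction
$$
T(g):=T_0(g)-T_0(\pi(g))+\pi(g),\qquad g\in G.
$$
Here $\pi(g)\in G_0\subseteq Z\injtensor W$, so $T$ still maps into $Z\injtensor W$; for $h\in G_0$ one has $\pi(h)=h$ and hence $T(h)=h$ exactly; and $T-T_0=(\mathrm{id}-T_0)\circ\pi$ has norm at most $nBC\eta$, so $T$ remains a near-isometry. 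Finally, choosing $\delta$ and $\eta$ small enough---after $G$, and hence $C$, $B$, $n$, have been fixed---makes $T$ an $\eps$-isometry fixing $G\cap(Z\injtensor W)$, as required. The main obstacle is precisely this tension between exact fixing and merely approximate tensor representations, resolved by the projection--correction step, which succeeds because $G\cap(Z\injtensor W)$ lands inside the target space $Z\injtensor W$.
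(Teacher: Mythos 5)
Your proof is correct, and it reaches the result by a genuinely different decomposition than the paper's. The paper first proves two auxiliary perturbation lemmata about dense subspaces (Lemmas~\ref{lema1teninj} and~\ref{lema2teninj}: if $Y\subset Z\subset X$ with $Z$ dense in $X$ and $Y$ an almost isometric ideal in $Z$, then $Y$ is an almost isometric ideal in $X$; and if $Y$ is dense in $Z$ and is an almost isometric ideal in $X$, then so is $Z$), and then reduces the theorem to showing that the \emph{algebraic} tensor product $Z\otimes_\varepsilon W$ is an almost isometric ideal in $X\otimes_\varepsilon Y$. In that uncompleted setting every element of $E\cap(Z\otimes_\varepsilon W)$ is an exact finite sum of elementary tensors with factors in $Z$ and $W$, so the operator $T\otimes S$ fixes the intersection exactly and no correction step is needed. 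You instead work directly in the completion $X\injtensor Y$, accept that $T_0=\Phi\circ S$ fixes $G_0=G\cap(Z\injtensor W)$ only up to $\eta$, and restore exact fixing by the correction $T=T_0-T_0\circ\pi+\pi$; note that when $\pi$ is the coordinate projection associated with your basis, this amounts to prescribing $T(h_j)=h_j$ and $T(g_k)=\Phi(\widetilde g_k)$, which is precisely the basis-level device used inside the paper's two lemmata (your constants $B$ and $C$ play the role of the paper's $\ell_1^p$-isomorphism constant). Both arguments ultimately rest on the same two facts — the injective norm respects subspaces, and $\phi\otimes\psi$ is a near-isometry whenever $\phi$ and $\psi$ are — the latter of which you actually justify via the Hahn--Banach estimate $\phi^*(B_{Z^*})\supseteq(1-\delta)B_{E_0^*}$, where the paper only asserts it. What the paper's route buys is modularity: the dense-subspace lemmata are reusable statements of independent interest, and the residual tensor argument becomes exact and clean. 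What your route buys is a shorter, self-contained, one-shot proof; your quantifier bookkeeping is sound, since $n$, $B$ and $C$ depend only on $G$, so $\eta$ and then $\delta$ may be chosen after them. Your opening observation — that the strategy of Theorem~\ref{tensor} breaks down for $\kappa=\aleph_0$ because the tensor components span infinite-dimensional subspaces — is also exactly the reason the paper isolates this case in a separate section.
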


For the proof we will need a pair of auxiliary lemmata whose aim is to show that the condition of being an almost isometric ideal actually relies on having such property at the level of dense subspaces. 

\begin{lemma}\label{lema1teninj}
Let $X$ be a Banach space and let $Y \subset Z \subset X$ be subspaces such that $Z$ is dense in $X$ and assume that $Y$ is an almost isometric ideal in $Z$. Then $Y$ is an almost isometric ideal in $X$.
\end{lemma}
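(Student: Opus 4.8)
The plan is to verify the defining property of an almost isometric ideal directly: given a finite dimensional subspace $E\subseteq X$ and $\varepsilon>0$, I must produce an $\varepsilon$-isometry $T:E\to Y$ fixing every point of $E\cap Y$. Since $Y$ is only assumed to be an almost isometric ideal in $Z$, and $E$ need not lie in $Z$, the idea is to replace $E$ by a nearby finite dimensional subspace $E'\subseteq Z$, apply the hypothesis there, and transport the resulting map back to $E$ by a small perturbation, controlling every error by the finite dimensionality of $E$.

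First I would set $F:=E\cap Y$, a finite dimensional subspace of $Y\subseteq Z$, and fix a basis $e_1,\dots,e_k$ of $F$, extended to a basis $e_1,\dots,e_k,e_{k+1},\dots,e_n$ of $E$. Since $Z$ is dense in $X$, for a small parameter $\delta>0$ (to be fixed at the end) I choose $z_j\in Z$ with $\|z_j-e_j\|<\delta$ for $k<j\le n$, and set $z_j:=e_j$ for $j\le k$ (these already lie in $Z$). Put $E':=\lspan\{e_1,\dots,e_k,z_{k+1},\dots,z_n\}\subseteq Z$ and let $\Phi:E\to X$ be the linear map determined by $\Phi(e_j)=z_j$, so that $\Phi(E)=E'$. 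Writing $e=\sum_j a_je_j$, one has $\Phi(e)-e=\sum_{j>k}a_j(z_j-e_j)$, whence $\|\Phi(e)-e\|\le\delta\sum_{j>k}|a_j|\le C\delta\|e\|$, where $C:=\sum_j\|e_j^*\|$ is finite by equivalence of norms on the finite dimensional space $E$ (here the $e_j^*$ are the coordinate functionals of the chosen basis). Consequently $\Phi$ is a $(C\delta)$-isometry of $E$ onto $E'$, and crucially $\Phi$ fixes $F$ pointwise because $\Phi(e_j)=e_j$ for every $j\le k$.

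Next I would apply the hypothesis that $Y$ is an almost isometric ideal in $Z$ to the finite dimensional subspace $E'\subseteq Z$ and a parameter $\delta'>0$: this yields a $\delta'$-isometry $S:E'\to Y$ with $S(w)=w$ for all $w\in E'\cap Y$. Setting $T:=S\circ\Phi:E\to Y$, composing the two approximate isometries gives
$$(1-C\delta)(1-\delta')\|e\|\le\|T(e)\|\le(1+C\delta)(1+\delta')\|e\|$$
for every $e\in E$; and for $e\in F=E\cap Y$ we have $\Phi(e)=e\in F\subseteq E'\cap Y$, hence $T(e)=S(e)=e$, so $T$ fixes $E\cap Y$. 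Finally I choose $\delta,\delta'>0$ small enough that $(1+C\delta)(1+\delta')\le 1+\varepsilon$ and $(1-C\delta)(1-\delta')\ge 1-\varepsilon$, which makes $T$ the desired $\varepsilon$-isometry.

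The one delicate point, and the step I would be most careful about, is the \emph{exact} (not merely approximate) preservation of $E\cap Y$: the definition requires $T$ to fix $E\cap Y$ on the nose, whereas the perturbation $\Phi$ only approximates $E$ by $E'$. This is resolved by adapting the basis to $F=E\cap Y$ and perturbing only the complementary vectors $e_{k+1},\dots,e_n$, so that $\Phi$ is literally the identity on $F$ while $S$ fixes $F$ because $F\subseteq E'\cap Y$; the approximation error is thus confined to directions transverse to $Y$. Everything else reduces to a routine finite dimensional estimate.
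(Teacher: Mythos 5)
Your proposal is correct and follows essentially the same route as the paper's proof: fix a basis of $E\cap Y$, extend it to a basis of $E$, perturb only the complementary basis vectors into $Z$ by density, apply the almost isometric ideal hypothesis to the resulting subspace $E'\subseteq Z$, and define $T$ so that it fixes $E\cap Y$ exactly, with the finite-dimensional constant (your $\sum_j\|e_j^*\|$, the paper's $\ell_1^p$-isomorphism constant) controlling the perturbation. The paper defines $T$ directly on the basis ($T(y_i)=y_i$, $T(e_i)=T'(z_i)$) rather than as an explicit composition $S\circ\Phi$, but this is the same map and the same estimates.
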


\begin{proof}
Let $E \subset X$ be a finite dimensional subspace and $\eps > 0$.

Since $E \cap Y$ is finite dimensional we can consider a finite basis of it, say $y_1, \cdots, y_n$, and we can enlarge it to a basis of $E$ $y_1, \cdots, y_n, e_{n+1}, \cdots, e_p$. Since $E$ is finite dimensional we get that $E$ is isomorphic to $\ell_1^p$, which gives us a constant $C>0$ such that, given $e \in E$, we get
\begin{align}\label{lema1tens1}
 \sum_{i=1}^p \vert \lambda_i\vert\leq C\left\Vert \sum_{i=1}^n \lambda_i y_i+\sum_{i=n+1}^p \lambda_i e_i\right\Vert\end{align}
holds for any choice of scalars $\lambda_1,\ldots,\lambda_p\in\mathbb R$.

On the other hand, by the density of $Z$ in $X$ we can find linearly independent vectors $z_{n+1}, \cdots, z_p \in Z$ such that, for every $i= n+1, \cdots, p$, we have
\begin{align}\label{lema1tens2}
    \norm{z_i - e_i} < \dfrac{\delta}{C}
\end{align}
where $0 < \delta < \min\{\sqrt{1+\eps} -1, 1- \sqrt{1-\eps}\}.$

Define
$$E' = \lspan\{y_1, \cdots, y_n, z_{n+1}, \cdots, z_p\} \subset Z.$$
Since $Y$ is an almost isometric ideal in $Z$ there exists a $\delta$-isometry $T': E' \ra Y$ which acts as the identity on $E' \cap Y$. Taking this into account, we can define $T: E \ra Y$ as the unique linear operator such that
\begin{align*}
    T(y_i) = & y_i, \, i= 1, \cdots, n, \\
    T(e_i) = & T'(z_i), \, i = n+1, \cdots, p.
\end{align*}
Let us prove that $T$ is the $\eps$-isometry that we were looking for. On the one hand, given $e \in E \cap Y$, we get that $e$ is a linear combination of $y_1, \cdots, y_n$, so the very definition of the operator $T$ yields $T(e) = e$.
  
On the other hand, in order to prove that $T$ is an $\varepsilon$-isometry, select any $e \in E$. We can express $e$ as
$$e = \sum_{i=1}^n \lambda_i y_i + \sum_{i=n+1}^p \lambda_i e_i,$$
and since $T'(y_i) = y_i$ holds for every $1\leq i\leq n$ (because $y_i \in E' \cap Y$), we get
    $$T(e) = \sum_{i=1}^n \lambda_i y_i + \sum_{i=n+1}^p \lambda_i T'(z_i) = T' \left( \sum_{i=1}^n \lambda_i y_i + \sum_{i=n+1}^p \lambda_i z_i\right).$$
A combination of equations (\ref{lema1tens1}) and  (\ref{lema1tens2}) yields
    \begin{align*}
        \norm{T(e)} & \leq (1+\delta) \norm{\sum_{i=1}^n \lambda_i y_i + \sum_{i=n+1}^p \lambda_i z_i} \leq (1+\delta) \left[ \norm{e} + \norm{\sum_{i=n+1}^p \lambda_i (z_i-e_i)} \right] \\
         & \mathop{\leq}\limits^{\mbox{\tiny{(\ref{lema1tens2})}}} (1+\delta) \left[ \norm{e} + \frac{\delta}{C} \sum_{i=1}^n\vert \lambda_i\vert \right] \mathop{\leq}\limits^{\mbox{\tiny{(\ref{lema1tens1})}}} (1+\delta)^2 \norm{e} < (1+\eps) \norm{e}.
    \end{align*}
With similar estimates we get that $\norm{T(e)}\geq (1-\eps) \norm{e}$.
This proves that $T$ is an $\varepsilon$-isometry and concludes the proof.
\end{proof}

Now we need a second auxiliary lemma which deals with another perturbation argument.

\begin{lemma}\label{lema2teninj}
Let $X$ be a Banach spaces and let $Y \subset Z \subset X$ be subspaces such that $Y$ is dense in $Z$ and such that $Y$ is an almost isometric ideal in $X$. Then $Z$ is an almost isometric ideal in $X$.
\end{lemma}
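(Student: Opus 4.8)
The plan is to mimic the perturbation scheme used in the proof of Lemma~\ref{lema1teninj}, but now perturbing in the \emph{source} of the ideal property rather than in its target. Fix a finite dimensional subspace $E\subset X$ and $\eps>0$, and pick $\delta>0$ small enough that $(1+\delta)^2+\delta\le 1+\eps$ and $(1-\delta)^2-\delta\ge 1-\eps$. The difficulty here, absent in Lemma~\ref{lema1teninj}, is that the points of $E\cap Z$ that $T$ must preserve need not belong to $Y$: they are only limits of elements of $Y$. The idea to circumvent this is to build an operator $T$ that fixes a basis of $E\cap Z$ \emph{by hand}, while behaving like an almost isometric ideal map of $Y$ on a complementary set of directions.

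Concretely, I would choose a basis $z_1,\dots,z_n$ of $E\cap Z$ and complete it to a basis $z_1,\dots,z_n,e_{n+1},\dots,e_p$ of $E$. Since $E\cong \ell_1^p$ there is a constant $C>0$ with $\sum_{i=1}^p\abs{\lambda_i}\le C\norm{\sum_{i=1}^n\lambda_i z_i+\sum_{i=n+1}^p\lambda_i e_i}$ for every choice of scalars. Using that $Y$ is dense in $Z$ and that $z_i\in Z$, I would choose $y_i\in Y$ with $\norm{y_i-z_i}<\delta/C$ and set $E':=\lspan\{y_1,\dots,y_n,e_{n+1},\dots,e_p\}\subset X$, a finite dimensional subspace. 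As $Y$ is an almost isometric ideal in $X$, there is a $\delta$-isometry $S\colon E'\ra Y$ fixing $E'\cap Y$; in particular $S(y_i)=y_i$ for $1\le i\le n$ since $y_i\in Y\cap E'$.

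Next I would define $T\colon E\ra Z$ as the linear extension of $T(z_i):=z_i$ ($1\le i\le n$) and $T(e_j):=S(e_j)$ ($n<j\le p$), noting that $T$ indeed takes values in $Z$ since $z_i\in Z$ and $S(e_j)\in Y\subset Z$. Point preservation is then immediate: every $e\in E\cap Z$ is of the form $\sum_{i=1}^n\lambda_i z_i$, whence $T(e)=e$. For the almost-isometry estimate, the crucial observation is that, writing $e=\sum_{i=1}^n\lambda_i z_i+\sum_{j=n+1}^p\mu_j e_j$ and $e':=\sum_{i=1}^n\lambda_i y_i+\sum_{j=n+1}^p\mu_j e_j\in E'$, one has the exact identity $T(e)=S(e')+(e-e')$, because $T(e)-S(e')=\sum_{i=1}^n\lambda_i(z_i-y_i)=e-e'$.

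The remaining work, which I expect to be the only genuinely delicate point, is the control of the perturbation $e-e'$. From $\norm{y_i-z_i}<\delta/C$ and the $\ell_1$-bound one gets $\norm{e-e'}\le \frac{\delta}{C}\sum_{i=1}^n\abs{\lambda_i}\le \delta\norm{e}$, hence $\norm{e'}\le(1+\delta)\norm{e}$ and $\norm{e'}\ge(1-\delta)\norm{e}$. Plugging these into $T(e)=S(e')+(e-e')$ and using that $S$ is a $\delta$-isometry yields $\norm{T(e)}\le\bigl((1+\delta)^2+\delta\bigr)\norm{e}\le(1+\eps)\norm{e}$ and, symmetrically, $\norm{T(e)}\ge\bigl((1-\delta)^2-\delta\bigr)\norm{e}\ge(1-\eps)\norm{e}$. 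This shows $T$ is an $\eps$-isometry fixing $E\cap Z$, and since $E$ and $\eps$ were arbitrary, $Z$ is an almost isometric ideal in $X$.
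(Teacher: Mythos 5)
Your proposal is correct and follows essentially the same argument as the paper: same basis of $E\cap Z$ completed to a basis of $E$, same $\ell_1^p$-constant, same density approximation $y_i\approx z_i$, same operator $T$ fixing the $z_i$ by hand and deferring to the almost isometric ideal map on the remaining basis vectors (your condition $(1\pm\delta)^2\pm\delta$ even expands to exactly the paper's $\delta^2+3\delta<\eps$ and $-\delta^2+3\delta<\eps$). The only cosmetic differences are that your $E'$ omits the redundant $z_i$'s and that you package the paper's chain of triangle inequalities into the tidy identity $T(e)=S(e')+(e-e')$.
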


\begin{proof}
Let $E \subset X$ be a finite dimensional subspace and $\eps > 0$.

As in the proof of Lemma~\ref{lema1teninj}, by the condition that $E \cap Z$ is finite dimensional we can find a finite basis for it $z_1, \cdots, z_n$ which we can extend to a basis of the whole $E$, say $z_1, \cdots, z_n, e_{n+1}, \cdots, e_p$. Once again the fact that $E$ is finite dimensional implies that $E$ is isomorphic to $\ell_1^p$, which gives us a constant $C>0$ such that, given $e \in E$, we get
\begin{align}\label{lema2tens1}
 \sum_{i=1}^p \vert \lambda_i\vert\leq C\left\Vert \sum_{i=1}^n \lambda_i y_i+\sum_{i=n+1}^p \lambda_i e_i\right\Vert\end{align}
holds for any choice of scalars $\lambda_1,\ldots,\lambda_p\in\mathbb R$.

On the other hand, since $Y$ is dense in $Z$ we can take linearly independent vectors $y_{1}, \cdots, y_n \in Y$ such that, given $i= 1, \cdots, n$, we get
\begin{align}\label{lema2tens2}
    \norm{y_i - z_i} < \dfrac{\delta}{C}
\end{align}
for $0 < \delta$ such that $\delta^2+3\delta < \eps$ and $-\delta^2 + 3 \delta < \eps$.

Define
$$E' = \lspan\{z_1, \cdots, z_n, e_{n+1}, \cdots, e_p, y_1, \cdots, y_n\}.$$
Since $Y$ is an almost isometric ideal in $X$ there exists a $\delta$-isometry $T': E' \ra Y$ that acts as the identity operator on $E' \cap Y$, so in particular it fixes the points $y_1, \cdots, y_n$. This allows us to define $T: E \ra Z$ as the unique linear operator satisfying the following conditions.
\begin{align*}
    T(z_i) = & z_i, \, i= 1, \cdots, n, \\
    T(e_i) = & T'(e_i), \, i = n+1, \cdots, p.
\end{align*}
Let us prove that $T$ is the $\eps$-isometry that we were looking for. On the one hand, given $e \in E \cap Z$ it is clear that $e$ is linear combination of the elements $z_1, \cdots, z_n$, and from the definition of $T$ we infer $T(e) = e$.

On the other hand, let us prove that $T$ is an $\varepsilon$-isometry. In order to do so select any $e \in E$ and write it as
    $$e = \sum_{i=1}^n \lambda_i z_i + \sum_{i=n+1}^p \lambda_i e_i$$
so that    
    $$T(e)=\sum_{i=1}^n \lambda_i z_i+\sum_{i=n+1}^p\lambda_i T'(e_i).$$
    By (\ref{lema2tens1}) and (\ref{lema2tens2}) we obtain that
    \begin{align*}
        \norm{T(e)} & \leq \norm{\sum_{i=1}^n \lambda_i (z_i - y_i)} + \norm{\sum_{i=1}^n \lambda_i y_i + \sum_{i=n+1}^p \lambda_i T'(e_i)} \\
         & = \norm{\sum_{i=1}^n \lambda_i (z_i - y_i)} + \norm{T' \left(\sum_{i=1}^n \lambda_i y_i + \sum_{i=n+1}^p \lambda_i e_i\right)} \\
         & \mathop{\leq}\limits^{\mbox{\tiny{(\ref{lema2tens2})}}}\frac{\delta}{C} \sum_{i=1}^n\vert \lambda_i\vert + (1+\delta) \norm{\sum_{i=1}^n \lambda_i y_i + \sum_{i=n+1}^p \lambda_i e_i}  \\
         & \mathop{\leq}\limits^{\mbox{\tiny{(\ref{lema2tens1})}}} \delta \norm{e} + (1+\delta) \left[  \norm{e} + \norm{\sum_{i=1}^n \lambda_i (z_i - y_i)} \right]  \\
         & \leq (\delta^2 + 3 \delta + 1) \norm{e} < (1+ \eps) \norm{e}.
    \end{align*}
In a similar way, we get $\norm{T(e)}> (1 - \eps) \norm{e}$,
so that $T$ is the $\varepsilon$-isometry we were looking for.
\end{proof}

Now we are ready to provide the proof of Theorem~\ref{theorem:aiidealinyect}.

\begin{proof}[Proof of Theorem~\ref{theorem:aiidealinyect}]
Observe that it is enough to prove that $Z \otimes_\eps W$ is an almost isometric ideal in $X \otimes_\eps Y$, because in such case Lemma~\ref{lema1teninj} implies that $Z \otimes_\eps W$ is an almost isometric ideal in $X \injtensor Y$ and then an application of Lemma~\ref{lema2teninj} implies that $Z \injtensor W$ is an almost isometric ideal in $X \injtensor Y$.

Consequently let us prove that $Z \otimes_\eps W$ is an almost isometric ideal in $X \otimes_\eps Y$, for which we take a finite dimensional subspace $E \subset X \otimes_\eps Y$ and $\eps >0$. Consider a basis $\bar{z_1}, \cdots, \bar{z_n}$ of $E \cap (Z \otimes_\eps W)$ and let us consider an extension to a basis of the whole space $E$, say $\bar{z_1}, \cdots, \bar{z_n}, \bar{e}_{n+1}, \cdots, \bar{e_p}$. By definition, every $\bar{z_i}$ can be written as finite sum of basic tensors in the following way
$$\bar{z_i} = \sum_{k=1}^{n_i} z_{ki} \otimes w_{ki},$$
where $z_{ki} \in Z$ and $w_{ki} \in W$. In a similar way we may write $\bar{e_i}$ in the form
$$\bar{e_i} = \sum_{k=1}^{n_i} x_{ki} \otimes y_{ki}$$
for certain $x_{ki} \in X$ and $y_{ki} \in Y$.

Now we consider the following finite dimensional spaces
\begin{align*}
    U = & \lspan\{z_{ki}; x_{ki}\} \subset X, \\
    V = & \lspan\{w_{ki}; y_{ki}\} \subset Y.
\end{align*}
Since $Z$ and $W$ are almost isometric ideals in $X$ and $Y$ respectively we can find two $\delta$-isometries $T: U \ra Z$ and $S: V \ra W$, for $\delta >0$ small enough to get $(1+\delta)^2 < 1+ \eps$ and $(1-\delta)^2 > 1-\eps$, acting both as the identity operator on $U \cap Z$ and on $V \cap W$ respectively. Since $T, S$ are $\delta$-isometries, for every $z \in U \otimes_\eps V$ we get that
$$(1-\eps) \norm{z} < (1-\delta)^2 \norm{z} \leq \norm{(T \otimes S)(z)} \leq (1+\delta)^2 \norm{z} < (1+\eps) \norm{z},$$
so $T \otimes S$ is an $\eps$-isometry too. It remains to prove that $T\otimes S$ acts as the identity operator on $E \cap (Z \otimes_\eps W)$ to conclude that $(T \otimes S)|_E$ is the operator that we were looking for.

In order to show this, select any $e \in E \cap (Z \otimes_\eps W)$, and observe that $e$ admits the following expression as finite sum of elementary tensors
    $$e = \sum_{i=1}^n \lambda_i \bar{z_i} = \sum_{i=1}^n \sum_{k=1}^{n_i} \lambda_i (z_{ki} \otimes w_{ki})$$
  for certain $\lambda_i\in\mathbb R$. Since $z_{ik} \in Z \cap U$ and $w_{ki} \in V \cap W$ holds for every $i,k$ we infer that
    $$(T \otimes S)(e) = \sum_{i=1}^n \sum_{k=1}^{n_i} \lambda_i (T(z_{ki}) \otimes S(w_{ki})) = \sum_{i=1}^n \sum_{k=1}^{n_i} \lambda_i (z_{ki} \otimes w_{ki}) = e$$
as desired.
\end{proof}

Now several comments are pertinent.

\begin{remark}\label{remark:consequencestensor}
In the first place, as we are concerned, Theorem~\ref{theo:injetensortransai} was unknown even for the $\aleph_0$ case. The authors acknowledge Vegard Lima addressing us this question.

On the other hand, in the proof of Theorem~\ref{theorem:aiidealinyect} the perturbation arguments behind the proofs of Lemmata~\ref{lema1teninj} and \ref{lema2teninj} are crucial, and it does not seem possible to get a suitable version of Theorem~\ref{theorem:aiidealinyect} for the case of isometric ideals. However, we do not know whether such version of Theorem~\ref{theorem:aiidealinyect} holds true.
\end{remark}

Now we move to the question whether Theorem~\ref{theo:injetensortransai} may hold for the projective tensor product, i.e. if it is possible that when $E\subseteq X$ and $F\subseteq Y$ are $\kappa$ almost isometric ideals, then $E\projtensor F$ is an almost isometric ideal in $X\projtensor Y$. Our aim is to present in Example~\ref{exam:contraproyect} that the answer to the above question is negative even if we consider $F=Y$ and for the classical notion of almost isometric ideal. We will show this fact by making use of an indirect argument which have to do with octahedral norms, for which we will present the following proposition. Due to the complex flavour of Example~\ref{exam:contraproyect} let us consider in the following proposition the scalar field to be either $\mathbb R$ or $\mathbb C$.

\begin{proposition}\label{prop:octahtensorproideal}
Let $X$ be an octahedral Banach space and $Y$ be any non-zero Banach space over the scalar field $\mathbb K$, being $\mathbb K$ either $\mathbb R$ or $\mathbb C$. If $(X\times \{0\})\projtensor Y$ is an almost isometric ideal in $(X\oplus_1\mathbb K)\projtensor Y$, then $X\projtensor Y$ is octahedral.
\end{proposition}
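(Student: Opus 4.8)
The plan is to reduce the statement to the characterization of octahedrality via almost isometric ideals given in Corollary~\ref{coro:aiiealoctaclasisuma}: the space $X\projtensor Y$ is octahedral if and only if $(X\projtensor Y)\times\{0\}$ is an almost isometric ideal in $(X\projtensor Y)\oplus_1\mathbb{K}$. (Since the proposition allows $\mathbb{K}=\mathbb{C}$, one should first note that Corollary~\ref{coro:aiiealoctaclasisuma}, stated over $\mathbb{R}$, remains valid over $\mathbb{K}$, as the proof of Theorem~\ref{theo:caraasqlargosaiidealesbis} goes through verbatim with $\mathbb{R}$ replaced by $\mathbb{K}$.) Thus the goal becomes to extract, from the hypothesis, that $(X\projtensor Y)\times\{0\}$ is an almost isometric ideal in $(X\projtensor Y)\oplus_1\mathbb{K}$.

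The key tool is the isometric identification
$$(X\oplus_1\mathbb{K})\projtensor Y\;\cong\;(X\projtensor Y)\oplus_1(\mathbb{K}\projtensor Y)\;\cong\;(X\projtensor Y)\oplus_1 Y,$$
that is, the projective tensor product distributes over $\ell_1$-sums, combined with $\mathbb{K}\projtensor Y=Y$. I would justify the first isometry by duality: a bounded operator $X\oplus_1\mathbb{K}\longrightarrow Y^*$ is exactly a pair of operators $X\longrightarrow Y^*$ and $\mathbb{K}\longrightarrow Y^*$ whose norm is the maximum, so $\bigl((X\oplus_1\mathbb{K})\projtensor Y\bigr)^*=L(X,Y^*)\oplus_\infty L(\mathbb{K},Y^*)$, which is the dual of $(X\projtensor Y)\oplus_1 Y$; the canonical map $(x,\lambda)\otimes y\mapsto(x\otimes y,\lambda y)$ realizes the predual isometry. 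Under this identification the subspace $(X\times\{0\})\projtensor Y$ corresponds precisely to $(X\projtensor Y)\times\{0\}$, so the hypothesis reads exactly: $(X\projtensor Y)\times\{0\}$ is an almost isometric ideal in $(X\projtensor Y)\oplus_1 Y$.

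To finish, write $W:=X\projtensor Y$ and fix $y_0\in S_Y$ (the only place where $Y\neq\{0\}$ is used). The assignment $\lambda\mapsto\lambda y_0$ embeds $\mathbb{K}$ isometrically into $Y$, hence $W\oplus_1\mathbb{K}$ embeds isometrically into $W\oplus_1 Y$, with $W\times\{0\}$ sitting inside both. I then invoke the (trivial) transitivity of almost isometric ideals: if $A$ is an almost isometric ideal in $B$ and $A\subseteq C\subseteq B$, then $A$ is an almost isometric ideal in $C$, because any finite-dimensional $E\subseteq C$ is also a subspace of $B$, and the $\varepsilon$-isometry $E\longrightarrow A$ fixing $E\cap A$ provided by $B$ serves equally for $C$ (note $E\cap A$ is the same computed in $C$ or in $B$). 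Applying this with $A=W\times\{0\}$, $C=W\oplus_1\mathbb{K}$ and $B=W\oplus_1 Y$ gives that $W\times\{0\}$ is an almost isometric ideal in $W\oplus_1\mathbb{K}$, and Corollary~\ref{coro:aiiealoctaclasisuma} yields that $W=X\projtensor Y$ is octahedral.

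The only genuinely technical point is the isometric identification of $(X\oplus_1\mathbb{K})\projtensor Y$ with $(X\projtensor Y)\oplus_1 Y$; everything else is formal. I would also remark that this argument does not seem to use the octahedrality of $X$ \emph{per se}: that assumption is rather the natural origin of the hypothesis on the tensor products, since, by Corollary~\ref{coro:aiiealoctaclasisuma}, the octahedrality of $X$ is precisely the statement that $X\times\{0\}$ is an almost isometric ideal in $X\oplus_1\mathbb{K}$, which is how the proposition will be fed into Example~\ref{exam:contraproyect}.
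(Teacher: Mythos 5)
Your proof is correct, but it takes a genuinely different route from the paper's. The paper argues directly: given $z_1,\dots,z_n$ in the unit sphere of $(X\times \{0\})\projtensor Y$, it picks norming operators $T_i\in L(X\times\{0\},Y^*)$, extends them to norm-one operators $G_i((x,\lambda)):=T_i(x,0)+\lambda y^*$ on $X\oplus_1\mathbb{K}$, deduces $\Vert z_i+(0,1)\otimes y\Vert=2$, and then uses the almost isometric ideal hypothesis to push $(0,1)\otimes y$ back into $(X\times\{0\})\projtensor Y$, concluding via the finite-set characterisation of octahedrality from \cite[Proposition 3.3 (iii)]{lan}. You instead use the isometric identification $(X\oplus_1\mathbb{K})\projtensor Y\cong(X\projtensor Y)\oplus_1 Y$ (distributivity of the projective tensor product over $\ell_1$-sums), under which the hypothesis becomes that $(X\projtensor Y)\times\{0\}$ is an almost isometric ideal in $(X\projtensor Y)\oplus_1 Y$, then restrict to the intermediate subspace $(X\projtensor Y)\oplus_1\mathbb{K}y_0$ (the restriction property you invoke is indeed trivial), and close with Corollary~\ref{coro:aiiealoctaclasisuma}. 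Both proofs need the complex version of Corollary~\ref{coro:aiiealoctaclasisuma} when $\mathbb{K}=\mathbb{C}$, which the paper also invokes without proof, so you are on equal footing there. What your approach buys: it is shorter, more structural, and it exposes that the octahedrality of $X$ is not actually needed for the implication --- in the paper that hypothesis is used only to guarantee, via Corollary~\ref{coro:aiiealoctaclasisuma} and the ideal theory, that $(X\times\{0\})\projtensor Y$ sits isometrically inside $(X\oplus_1\mathbb{K})\projtensor Y$, a fact your identification delivers for free. What the paper's approach buys: it never needs the distributivity identification, which is the one point of your argument you should write out carefully; your duality sketch is essentially right, but the cleanest justification is to exhibit the two mutually inverse canonical maps between $(X\oplus_1\mathbb{K})\projtensor Y$ and $(X\projtensor Y)\oplus_1(\mathbb{K}\projtensor Y)$, check on elementary tensors that both have norm at most one, and conclude both are isometries.
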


\begin{proof}
Let us prove that $(X\times\{0\})\projtensor Y$ (which is isometrically isomorphic to $X\projtensor Y$) is octahedral. Select $z_1,\ldots, z_n\in S_{(X \times\{0\})\projtensor Y}$ and $\varepsilon>0$, and let us prove that there exists $z\in S_{(X\times\{0\})\projtensor Y}$ such that
$$\Vert z_i+z\Vert \ge 2-\varepsilon\ \forall 1\leq i\leq n.$$
This is enough to get that $(X\times \{0\})\projtensor Y$ is octahedral (c.f. e.g. \cite[Proposition 3.3 (iii)]{lan}). 

In order to do so select $y\in S_Y$ and $y^*\in S_{Y^*}$ such that $y^*(y)=1$ and fix $1\leq i\leq n$. Select an element $T_i\in ((X\times\{0\})\projtensor Y)^*=L(X\times\{0\},Y^*)$ such that $T_i(z_i)=1$ and $\norm{T_i} = 1$. Define $G_i:X\oplus_1\mathbb K\longrightarrow Y^*$ by the equation
$$G_i((x,\lambda)):=T_i(x,0)+\lambda y^*.$$
We claim that $\Vert G_i\Vert_{L(X\oplus_1\mathbb K,Y^*)}\leq 1$. Indeed, given any $x\in X$ and $\lambda\in\mathbb K$ we get
\[
\begin{split}
\Vert G_i((x,\lambda))\Vert=\Vert T_i(x,0)+\lambda y^*\Vert& \leq \Vert T_i(x,0)\Vert+\vert\lambda\vert\leq \Vert T_i\Vert \Vert (x,0)\Vert+\vert\lambda\vert\\
& =\Vert x\Vert+\vert\lambda\vert=\Vert (x,\lambda)\Vert_{X\oplus_1\mathbb K}.    
\end{split}
\]
Consequently $\Vert G_i\Vert=1$. Observe also that $G_i(z_i)=T_i(z_i)$. In fact, observe that $G_i=T_i$ agree on $(X\times\{0\})\projtensor Y$ (observe that $(X\times\{0\})\projtensor Y$ is a subspace of $(X\oplus_1\mathbb K)\projtensor Y$ since $X\times\{0\}$ is an almost isometric ideal in $X\oplus_1 \mathbb K$ by (the complex version of) Corollary~\ref{coro:aiiealoctaclasisuma}). Consequently
$$\Vert z_i+(0,1)\otimes y\Vert\geq G_i(z_i)+G_i((0,1))(y)=1+y^*(y)=2.$$
The arbitrariness of $1\leq i\leq n$ implies that
$$\Vert z_i+(0,1)\otimes y\Vert_{(X\oplus_1\mathbb K)\pten Y}=2$$
holds for every $1\leq i\leq n$. In order to finish the proof, since $(X\times 0)\projtensor Y$ is an almost isometric ideal in $(X\oplus_1\mathbb K)\projtensor Y$ we can find an $(\varepsilon/2)$-isometry $T:\mathrm{span}\{z_1,\ldots, z_n,(0,1)\otimes y\}\longrightarrow (X\times\{0\})\projtensor Y$ which fixes the elements of the intersection (in particular $T(z_i)=z_i$ holds for $1\leq i\leq n$). Now the element $z:=T((0,1)\otimes y)/\norm{T((0,1) \otimes y)}$ satisfies our purposes.
\end{proof}

The following example shows that if $Y$ is an almost isometric ideal in $X$ and $Z$ is any Banach space then $Y\projtensor Z$ is not necessarily an almost isometric ideal in $X\projtensor Z$.

\begin{example}\label{exam:contraproyect}
In \cite[Theorem 3.2]{rueda25} it is proved that there exists a 2-dimensional complex space $Y\subseteq \ell_\infty^{10}(\mathbb C)$ such that, for $X=L_\infty^\mathbb C([0,1])$, the space $X\projtensor Y$ fails to be octahedral, where $L_\infty^\mathbb C([0,1])$ stands for the space of all complex-valued $L_\infty$ functions. Since $X$ is octahedral, we conclude that $X\times \{0\}$ is an almost isometric ideal in $X\oplus_1\mathbb K$ (Corollary~\ref{coro:aiiealoctaclasisuma}) but $(X\times\{0\})\projtensor Y$ is not an almost isometric ideal in $(X\oplus_1\mathbb K)\projtensor Y$ (Proposition~\ref{prop:octahtensorproideal}).   
\end{example}

We end this section with a result about preservance of almost isometric ideals by taking ultrapower spaces. We refer the reader to Subsection~\ref{subsect:ultrapowers} for necessary notation about ultrapower spaces.

\begin{proposition}\label{prop:ultrapoweraiideals}
Let $X$ be a Banach space and let $Y$ be a subspace of $X$. The following assertions are equivalent:
\begin{enumerate}
    \item $Y$ is an almost isometric ideal in $X$.
    \item $Y_\mathcal{U}$ is an isometric ideal in $X_\mathcal{U}$ for any countably incomplete ultrafilter $\mathcal{U}$ on any infinite set $I$.
\end{enumerate}
\end{proposition}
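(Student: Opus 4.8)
The plan is to prove both implications by transferring finite-dimensional data between $X$ and its ultrapower $X_\mathcal{U}$ through the canonical isometric embedding $j\colon X\longrightarrow X_\mathcal{U}$, $j(x)=[x]_\mathcal{U}$, together with coordinatewise (ultraproduct) operators. Throughout I will assume $Y$ is closed, so that for every subspace $E\subseteq X$ one has $j(E)\cap Y_\mathcal{U}=j(E\cap Y)$ (indeed, if $[e]_\mathcal{U}\in Y_\mathcal{U}$ then $e$ lies in the closure of $Y$, hence in $Y$).

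For $(1)\Rightarrow(2)$ I would fix a countably incomplete $\mathcal{U}$ on $I$ and a finite-dimensional $E\subseteq X_\mathcal{U}$, choose a basis $v_1,\dots,v_k$ of $E\cap Y_\mathcal{U}$ and extend it to a basis $v_1,\dots,v_k,w_{k+1},\dots,w_n$ of $E$, and pick representatives $v_j=[y_i^j]_\mathcal{U}$ with $y_i^j\in Y$ and $w_j=[x_i^j]_\mathcal{U}$ with $x_i^j\in X$. Countable incompleteness furnishes $\varepsilon_i>0$ with $\lim_\mathcal{U}\varepsilon_i=0$, and applying the almost isometric ideal hypothesis to $E_i:=\lspan\{y_i^1,\dots,y_i^k,x_i^{k+1},\dots,x_i^n\}\subseteq X$ yields $\varepsilon_i$-isometries $T_i\colon E_i\longrightarrow Y$ fixing $E_i\cap Y$. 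I then set $T(u):=[T_i(u_i)]_\mathcal{U}$ for $u=\sum_{j\le k}\lambda_j v_j+\sum_{j>k}\mu_j w_j$, where $u_i:=\sum_{j\le k}\lambda_j y_i^j+\sum_{j>k}\mu_j x_i^j$ (so $[u_i]_\mathcal{U}=u$). This is well defined (the sequence $(T_i(u_i))_i$ is bounded since $\norm{T_i(u_i)}\le(1+\varepsilon_i)\norm{u_i}$) and linear; since $(1-\varepsilon_i)\norm{u_i}\le\norm{T_i(u_i)}\le(1+\varepsilon_i)\norm{u_i}$ with $\lim_\mathcal{U}\varepsilon_i=0$ and $\lim_\mathcal{U}\norm{u_i}=\norm{u}$, taking $\lim_\mathcal{U}$ gives $\norm{T(u)}=\norm{u}$. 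Finally, if $u\in E\cap Y_\mathcal{U}$ then $u=\sum_{j\le k}\lambda_j v_j$, whence $u_i\in E_i\cap Y$, $T_i(u_i)=u_i$, and $T(u)=[u_i]_\mathcal{U}=u$; thus $T$ is an isometry fixing $E\cap Y_\mathcal{U}$.

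For $(2)\Rightarrow(1)$ I would fix a finite-dimensional $E\subseteq X$ and $\varepsilon>0$ and invoke $(2)$ for a fixed free (hence countably incomplete) ultrafilter on $\mathbb N$. As $j$ is an isometry, $j(E)$ is finite dimensional and $j(E)\cap Y_\mathcal{U}=j(E\cap Y)$, so $(2)$ gives an isometry $S\colon j(E)\longrightarrow Y_\mathcal{U}$ fixing $j(E\cap Y)$. Recalling from Subsection~\ref{subsect:ultrapowers} that the diagonal embedding of $Y$ is an almost isometric ideal in $Y_\mathcal{U}$, I apply this to $G:=S(j(E))\subseteq Y_\mathcal{U}$ to obtain an $\varepsilon$-isometry $R\colon G\longrightarrow j(Y)$ fixing $G\cap j(Y)$. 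The composition $T:=(j|_Y)^{-1}\circ R\circ S\circ(j|_E)\colon E\longrightarrow Y$ is then an $\varepsilon$-isometry, and for $x\in E\cap Y$ one has $S(j(x))=j(x)\in G\cap j(Y)$, so $R(S(j(x)))=j(x)$ and $T(x)=x$.

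The genuinely delicate point, and the reason the two directions differ, is reconciling the exact fixing of the intersection with the passage between positive errors and an isometric ($\varepsilon=0$) conclusion. In $(1)\Rightarrow(2)$ this is precisely where countable incompleteness is indispensable: it kills the individual $\varepsilon_i$ in the ultralimit while still letting each $T_i$ exist. In $(2)\Rightarrow(1)$ the obstacle is that pulling $S$ back coordinatewise only fixes $E\cap Y$ approximately; I resolve it by the extra almost isometric descent $R$ from $Y_\mathcal{U}$ to $j(Y)$. If one prefers to avoid citing the ultrapower fact, the same step can be done by hand: writing $S(j(e_l))=[y_i^l]_\mathcal{U}$, a compactness argument (a finite $\delta$-net on $S_E$ and on $B_{E\cap Y}$, together with a $\mathcal{U}$-almost-everywhere uniform bound on the maps $e_l\mapsto y_i^l$) shows that for $\mathcal{U}$-almost every $i$ the map $T_i^0$ is an $\varepsilon_0$-isometry with $\norm{T_i^0(x)-x}$ uniformly small on $E\cap Y$; fixing one such $i_0$ and a bounded projection $\pi\colon E\longrightarrow E\cap Y$, the corrected operator $T(e):=\pi(e)+T_{i_0}^0(e-\pi(e))$ fixes $E\cap Y$ exactly while remaining an $\varepsilon$-isometry.
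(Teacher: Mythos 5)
Your proof is correct and follows essentially the same route as the paper: for $(1)\Rightarrow(2)$ you build the operator coordinatewise from $\varepsilon_i$-isometries $T_i$ on the spans of representatives, using countable incompleteness to kill the errors in the ultralimit (your direct $\lim_\mathcal{U}$ computation is just a streamlined version of the paper's argument with the sets $A$, $B$, $C$), and for $(2)\Rightarrow(1)$ you compose the isometry given by $(2)$ on $j(E)$ with the almost isometric ideal property of $j(Y)$ in $Y_\mathcal{U}$ and pull back through $j$, exactly as the paper does.
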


\begin{proof}
(1) $\Rightarrow$ (2)

Let $E \subset X_\mathcal{U}$ be a finite dimensional subspace and consider a basis for $E$
$$\{z_1, \cdots,z_p, v_{p+1}, \cdots, v_q\}$$
with the property that $\{z_1, \cdots, z_p\}$ is a basis of $E \cap Y_\mathcal{U}$ (if $E \cap Y_\mathcal{U}$ is trivial simply ignore the $z_i$ in the proof). Moreover we select representatives for the elements $z_j = [y_i^j]_\mathcal{U}$ for every $1 \leq j \leq p$, where $y_i^j \in Y$ and $v_j = [x_i^j]_\mathcal{U}$ for $p+1 \leq j \leq q$.

Now define, for every $i\in I$, a finite dimensional subspace of $X$ by
$$E_i = \lspan\{y_i^1, \cdots,y_i^p, x_i^{p+1}, \cdots, x_i^q\}.$$
By our assumption there exists a bounded operator $T_i: E_i \ra Y$ such that
\begin{align}\label{ultra1}
    T_i(y_i^j) = y_i^j, \, j=1, \cdots, p
\end{align}
and such that
\begin{align}\label{ultra2}
    (1-h(i)) \norm{e} \leq \norm{T_i(e)} \leq (1+h(i)) \norm{e}, \, \forall e \in E_i,
\end{align}
where $h:I \ra \R$ is a strictly positive function such that $\lim_{\mathcal{U}} h(i) = 0$ (observe that one such $h$ does exists since $\mathcal{U}$ is countably incomplete, see e.g. \cite[Proposition 1.1.8]{grelier} for a proof).

Now we can define $T: E \ra Y_\mathcal{U}$ as the unique linear operator satisfying the following conditions
\begin{align*}
    & T(z_j) = [T_i(y_i^j)]_\mathcal{U} = [y_i^j]_\mathcal{U} = z_j, \, j= 1, \cdots, p, \\
    & T(v_j) = [T_i(x_i^j)]_\mathcal{U}, \, j= p+1, \cdots, q ,
\end{align*}
and let us prove that $T$ is the mapping that we were looking for. On the one hand, if $e \in E \cap Y_\mathcal{U}$, we can write 
$$e = \sum_{j=1}^p \alpha_j z_j$$
for certain $\alpha_j\in\mathbb R$, so
$$T(e) = \sum_{j=1}^p \alpha_j T(z_j) = \sum_{j=1}^p \alpha_j z_j = e.$$
The above proves that $T$ acts as the identity operator on $E \cap Y_\mathcal{U}$.

On the other hand, let us prove that $T$ is an isometry. To do so, select any $e \in E$, and consider an expression of $e$ as linear combination of the elements of the basis $\{z_1,\ldots, z_p, v_{p+1},\ldots, v_q\}$, that is
    $$e = \sum_{j=1}^p \alpha_j z_j + \sum_{j=p+1}^q \alpha_j v_j = \left[\sum_{j=1}^p \alpha_j y_i^j + \sum_{j=p+1}^q \alpha_j x_i^j  \right]_\mathcal{U}$$
Using the linearity of $T$ we infer
\begin{align*}
        T(e) = & \sum_{j=1}^p \alpha_j T(z_j) + \sum_{j=p+1}^q \alpha_j T(v_j) = \sum_{j=1}^p \alpha_j [T_i(y_i^j)]_\mathcal{U} + \sum_{j=p+1}^q \alpha_j [T_i(x_i^j)]_\mathcal{U} \\
         = & \left[T_i\left(\sum_{j=1}^p \alpha_j y_i^j + \sum_{j=p+1}^q \alpha_j x_i^j \right) \right]_\mathcal{U}.
\end{align*}
Fix $\eps > 0$ and define the following sets
    \begin{align*}
        A = & \{i\in I: h(i) < \eps\}, \\
        B = & \left\{i\in I: \abs{\norm{\sum_{j=1}^p \alpha_j y_i^j + \sum_{j=p+1}^q \alpha_j x_i^j} - \norm{e}} < \eps \norm{e}\right\}, \\
        C = & \left\{i\in I: \abs{\norm{T_i\left(\sum_{j=1}^p \alpha_j y_i^j + \sum_{j=p+1}^q \alpha_j x_i^j\right)} - \norm{T(e)}} < \eps \norm{e}\right\}. \\
    \end{align*}
Observe that the three above elements belong to $\mathcal{U}$ since $\lim_\mathcal{U} f(i) = 0$, $\norm{e} = \lim_\mathcal{U} \norm{e_i}$ and $\norm{T(e)} = \lim_\mathcal{U} \norm{T_i(e_i)}$.
If we select $i \in A \cap B \cap C$ we get, taking into account equation (\ref{ultra2}), that
    \begin{align*}
        \norm{T(e)} & > \norm{T_i\left(\sum_{j=1}^p \alpha_j y_i^j + \sum_{j=p+1}^q \alpha_j x_i^j\right)} - \eps\norm{e} \\
         & \geq  (1-h(i)) \norm{\sum_{j=1}^p \alpha_j y_i^j + \sum_{j=p+1}^q \alpha_j x_i^j} - \eps \norm{e} \\
         & > (1-\eps)^2 \norm{e} - \eps\norm{e} = \left( (1-\eps)^2 - \eps \right)\norm{e}.
    \end{align*}
The condition $\norm{T(e)}<\left( (1+\eps)^2 + \eps \right)\norm{e}$ is proved in a similar way.
Summarising we have proved that, for every $e \in E$, the following condition holds
    $$\left( (1-\eps)^2 - \eps \right)\norm{e} \leq \norm{T(e)} \leq \left( (1+\eps)^2 + \eps \right)\norm{e}.$$
The arbitrariness of $\varepsilon$ show that $T$ is an isometry, as desired.

(2) $\Rightarrow$ (1)

Let $E \subset X$ be a finite dimensional subspace and let $\eps >0$.

Define $F= j(E)$,  where $j: X \ra X_\mathcal{U}$ is the canonical injection and, since $F$ is finite dimensional, we get by the assumptions that there exists an isometry $\phi: F \ra Y_\mathcal{U}$ acting as the identity operator on $F \cap Y_\mathcal{U}$.

On the other hand, $j(Y)$ is an almost isometric ideal in $Y_\mathcal{U}$ (see the two last paragraphs of Subsection~\ref{subsect:ultrapowers}). Since $\phi(F)$ is a finite dimensional subspace of $Y_\mathcal{U}$ there exists a $\eps$-isometry $\Phi: \phi(F) \ra j(Y)$ acting as the identity operator on $\phi(F) \cap j(Y)$.

Finally define $T: E \ra Y$ by $\psi = j^{-1} \circ \Phi \circ \phi \circ j \circ \iota$ (where $\iota: E \ra X$ is the inclusion operator). Let us prove that $T$ is the $\eps$-isometry that we were looking for. On the one hand, given $e \in E \cap Y$, we have that
    $$j(\iota (e)) = [e]_\mathcal{U} \in F \cap Y_\mathcal{U},$$
    so
    $$\phi(j(\iota(e))) = j(e) \in \phi(F) \cap j(Y).$$
The above allows us to get,
    $$\Phi(\phi(j(\iota(e)))) = \phi(j(\iota(e))) = j(e),$$
from where we obtain that
    $$T(e) = j^{-1}(j(e)) = e.$$
    On the other hand, since $j$ is an isometry we get, given $e \in E$ that
    \begin{align*}
        \norm{T(e)} = \norm{\Phi(\phi(j(e)))} \leq (1+\eps) \norm{\phi(j(e))} = (1+\eps) \norm{e} \\
        \norm{T(e)} = \norm{\Phi(\phi(j(e)))} \geq (1-\eps) \norm{\phi(j(e))} = (1-\eps) \norm{e} \\
    \end{align*}
This concludes (1) and finishes the proof.
\end{proof}

\section{Concluding remarks and open questions}\label{sect:openquestion}

In this section we collect some open questions derived from our work.

To begin with, as we stated in the remark after Definition \ref{defi:transideal} we did not find any example of spaces $Y\subseteq X$ such that $Y$ is $\kappa$ almost ideal in $X$ but which is not a $\kappa$ ideal in $X$ out of the case that $\kappa=\aleph_0$. If such counterexample existed, then the study of this new property would have perfect sense by its own. Because of this reason, we wonder the following:

\begin{question}
If $Y\subseteq X$ is a $\kappa$ almost ideal for some uncountable cardinal $\kappa$, does it imply that $Y$ is a $\kappa$ ideal in $X$?
\end{question}

As we pointed of in the paragraph preceding Proposition~\ref{theo:idealfree}, it is known that if $\mathcal F(Y)$ is an ideal in $\mathcal F(X)$ then $Y$ is an ideal in $X$, which follow by making use of weak-star compactness arguments which are impossible to apply to our transfinite setting. Because of this, we do not know whether the above result extends to a transfinite version in the following sense:

\begin{question}\label{question:lipfree}
Let $Y\subseteq X$ be Banach spaces and $\kappa$ an uncountable cardinal. If $\mathcal F(Y)$ is a $\kappa$ ideal in $\mathcal F(X)$, is it true that $Y$ is a $\kappa$ ideal in $X$? 
\end{question}

The examples constructed in Example \ref{exam:fallokappaideal} are strongly based on the break of the Continuum Hypothesis. Because of this, we wonder:

\begin{question}\label{question:contraejesimyostgeneral}
Can we construct counterexamples with the properties of Example \ref{exam:fallokappaideal} without the addition of any axiom to ZFC? Can we extend the counterexamples to any cardinal $\kappa$?
\end{question}

From point (2) in Remark~\ref{remark:consequencestensor} the following question arises:

\begin{question}
Let $X,Z$ be Banach spaces and let $Y\subseteq X$ be an isometric ideal in $X$. Is it true that $Y\injtensor Z$ is an isometric ideal in $X\injtensor Y$?
\end{question}

\section*{Acknowledgements}  

The authors are grateful with Vegard Lima for answering inquires about almost isometric ideals and for addresing the authors the question whether the injective tensor product preserves almost isometric ideal, which motivated the authors the search of Theorem~\ref{theorem:aiidealinyect}. The authors are also deeply grateful with Gin\'es L\'opez-P\'erez for fruitful and regular discussions on the topic of the paper.

Part of the research of the topic was developed during the participation of the last author to the congress “Structures in Banach Spaces” which took place at the Erwin Schr\"odinger International Institute for Mathematics and Physics (ESI) of the University of Vienna, institution which supported part of the expenses. The last author is grateful to the ESI for the financial support received.

This research has been supported  by MCIU/AEI/FEDER/UE\\  Grant PID2021-122126NB-C31, by MICINN (Spain) Grant \\ CEX2020-001105-M (MCIU, AEI) and by Junta de Andaluc\'{\i}a Grant FQM-0185. The research of E. Mart\'inez Va\~n\'o has also been supported by Grant PRE2022-101438 funded by MCIN/AEI/10.13039/501100011033 and ESF+. The research of A. Rueda Zoca was also supported by Fundaci\'on S\'eneca: ACyT Regi\'on de Murcia grant 21955/PI/22.

\end{document}